\newtheorem{definition}{Definition}[section]
\newtheorem{example}{Example}[section]
\newtheorem{proposition}{Proposition}[section]
\newtheorem{fact}{Fact}[section]
\newtheorem{lemma}{Lemma}[section]
\newtheorem{remark}{Remark}[section]
\newtheorem{corollary}{Corollary}[section]
\newtheorem{notation}{Notation}[section]
\newcommand{\smallitems}{\setlength{\parsep}{-0.5ex plus 0.5ex}%
                        \setlength{\itemsep}{-0.5ex plus 0.5ex}%
                        }
\def\Nm{\mathit{Nm}}
\def\Fr{\mathit{Fr}}
\def\sem#1{[\![ #1 ] \!]}
\def\F{\mathcal{F}}
\def\C{\mathcal{C}}
\newcommand{\co}{\,\colon\;}
\newcommand{\ra}{\rightarrow}
\newcommand{\Ra}{\Rightarrow}
\newcommand{\op}{\mathrm{op}}
\newcommand{\impl}{\Rightarrow}
\newcommand{\Nom}{\mathrm{Nom}}
\newcommand{\NNom}{\mathit{Nom}}
\def\sen#1{#1\mbox{-}\mathrm{sen}}
\newcommand{\Sign}{\mathit{Sign}}
\newcommand{\Sen}{\mathit{Sen}}
\newcommand{\Mod}{\mathit{Mod}}
\def\A{\mathcal{A}}
\def\B{\mathcal{B}}
\def\Set{\mathbf{Set}}
\def\CAT{\mathbf{C\!A\!T}}
\def\CCAT{\mathbf{CC\!A\!T}}
\def\I{\mathcal{I}}
\def\FOL{\mathcal{F\!O\!L}}
\def\AFOL{\mathcal{A\!F\!O\!L}}
\def\PL{\mathcal{P\!L}}
\def\APL{\mathcal{A\!P\!L}}
\def\OFOL{\mathcal{O\!F\!O\!L}}
\def\MOFOL{\mathcal{MO\!F\!O\!L}}
\def\HOFOL{\mathcal{H\!O\!F\!O\!L}}
\def\HMOFOL{\mathcal{H\!MO\!F\!O\!L}}
\def\MPL{\mathcal{MP\!L}}
\def\MFOL{\mathcal{MF\!O\!L}}
\def\HPL{\mathcal{H\!P\!L}}
\def\HHPL{\mathcal{H\!H\!P\!L}}
\def\HHHPL{\mathcal{H\!H\!H\!P\!L}}
\def\HFOL{\mathcal{H\!F\!O\!L}}
\def\HHFOL{\mathcal{H\!H\!F\!O\!L}}
\def\MMPL{\mathcal{MMP\!L}}
\def\MMFOL{\mathcal{MMF\!O\!L}}
\def\MHPL{\mathcal{MH\!P\!L}}
\def\MHFOL{\mathcal{MH\!F\!O\!L}}
\def\REL{\mathcal{R\!E\!L}}
\def\BREL{\mathcal{B\!R\!E\!L}}
\def\BRELC{\mathcal{B\!R\!E\!L\!C}}
\def\SETC{\mathcal{S\!E\!T\!C}}
\def\SET{\mathit{SET}}
\DeclareMathAlphabet{\mathbb}{U}{msb}{m}{n}
\DeclareSymbolFont{ams}{U}{msa}{m}{n}
\DeclareSymbolFontAlphabet{\mathams}{ams}
\DeclareMathSymbol{\filter}{\mathams}{ams}{22}
\begin{document}

\title{Implicit Kripke Semantics and Ultraproducts in Stratified
  Institutions} 

\author{R\u{a}zvan Diaconescu}
\ead{Razvan.Diaconescu@imar.ro}
\address{Simion Stoilow Institute of Mathematics of the Romanian
Academy, Bucharest, Romania}

\journal{JLC} 

\begin{abstract}
We propose \emph{stratified institutions} (a decade old generalised
version of the theory of institutions of Goguen and Burstall) as a
fully abstract model theoretic approach to modal logic.   
This allows for a uniform treatment of model theoretic aspects across
the great multiplicity of contemporary modal logic systems. 
Moreover Kripke semantics (in all its manifold variations) is captured
in an implicit manner free from the sometimes bulky aspects of
explicit Kripke structures, also accommodating other forms of concrete  
semantics for modal logic systems. 
The conceptual power of stratified institutions is illustrated with
the development of a modal ultraproducts method that is independent of
the concrete details of the actual modal logical systems.  
Consequently, a wide array of compactness results in concrete modal
logics may be derived easily.  
\end{abstract}

\maketitle

\section{Introduction}

The model theory oriented formalisation by Goguen and Burstall
\cite{ins} of the notion of a logical system as an \emph{institution}
has started a line of important developments of adequately abstract
and general approaches to the foundations of software specifications
and formal system development (see \cite{sannella-tarlecki-book}) as
well as a modern version of very abstract model theory (see
\cite{iimt}). 
One of the main original motivations for introducing institution
theory was to respond to the explosion in the population of logics in
use in computing about three decades ago, a situation that continues
today perhaps at an accelerated pace. 
Among the logics with relevance in various areas of informatics there
is of course the family of modal logics, with its great multiplicity
of flavours. 
The recent works on `modalizations' of institutions
\cite{ks,HybridIns,QVHybrid,EncHybrid} (see also \cite{iimt}), in
which only the modalities (and eventually nominals and $@$) and Kripke
semantics are kept explicit, while the other ingredients (e.g. sorts,
functions, predicates, constraints, etc.) are abstracted away, has
intensified the quest for a fully abstract institution theoretic
approach that has the potential to address adequately the
specificities of modality and Kripke semantics while leaving none of
these explicit.  

Our paper proposes stratified institutions of \cite{strat} as a
general framework for a fully abstract approach to the semantics of
modal logic.  
In particular this means no explicit modalities, no explicit Kripke
structures, while still retaining the essence of Kripke semantics. 
Consequently a very general form of model theory uniformly applicable
to a wide range of concrete modal logic systems, either conventional
or more eccentric, can be developed.  
Results can be developed in a top-down manner with hypotheses
kept as general as possible and introduced on a by-need basis, the
whole development process being guided by structurally clean
causality.  
From the perspective of institution theory, our proposal yields an
institution theoretic structure fully capable of addressing modality.
The conventional definition of institution \cite{ins} may lack enough
structure to capture various specificities of modal logics, hence our
work can be regarded as a minimal but sufficient refinement of the
concept of institution towards modal logics. 

We illustrate the power of our concepts with the development of very
general modal-oriented ultraproducts method.
This provides rather automatically {\L}o\'{s}-style theorems
\cite{los55,chang-keisler} for a wide range of concrete modal systems,
as a puzzle of preservation results in the style of
\cite{upins,iimt,ks}.    
In conventional model theory the method of ultraproducts is renowned
as extremely powerful and pervading a lot of deep results (see
\cite{chang-keisler}, for example), many of these been lifted to 
the level of abstract institutions (see \cite{iimt}). 
Our developments may represent the beginning of a similar journey in
the realm of modality and Kripke semantics. 
From the many consequences of ultraproducts, here we focus only on
compactness results.
Hence we derive a series of modal compactness results for our
benchmark examples, this process having a generic nature.  

\subsection*{Summary and Contributions.}

\begin{enumerate}

\item We recall briefly some category and institution theoretic
  concepts and notations that are necessary for our paper. 

\item We from \cite{strat} the concept of stratified institution and
  slightly upgrade it. 
  Ordinary institutions arise as stratified institutions with a
  trivial stratification; in this way stratified institutions can be
  seen as more general than ordinary institutions. 
  The move in the other direction is given by two general
  interpretations of stratified institutions as ordinary
  institutions.  
  They represent high abstractions of the concepts of
  \emph{local} and \emph{global satisfaction} from modal logic,
  respectively. 

\item We provide a series of examples of stratified institutions that
  include both conventional and eccentric modal logic systems. The
  former category includes propositional and first order modal
  logic, possibly with hybrid and polyadic modalities features,
  while the latter includes the double hybridization of
  \cite{madeira-phd,EncHybrid} and a first order valuation semantics
  for first order modal logic that is based upon the `internal
  stratification' example introduced in \cite{strat}. 
  These are to be used as benchmark examples for the further
  developments in the paper. 

\item We give a straightforward extension of the well known
  institution theoretic semantics of the Boolean connectives
  $\wedge$, $\neg$, etc. and of the quantifiers $\forall$, $\exists$
  to the more refined level of stratified institutions and establish
  the relationship with their correspondents from the local and the 
  global institutions associated to the stratified institution.  

\item We introduce a semantics for modalities and for hybrid features
  in abstract stratified institutions.  
  This is one of the crucial contributions of this paper. 

\item We extend the institution theoretic method of ultraproducts
  \cite{upins,iimt} to stratified institutions. The core contributions
  here consist of a series of general preservation results across
  the abstract semantics for Boolean connectives, quantifiers,
  modalities, nominals, $@$.  
  These cover related previous developments from \cite{ks} (also to be
  found in \cite{iimt}), but with significant differences in
  generality: (1) stratified institutions with their lack of
  commitment to explicit modalities and Kripke structures are much
  more general than the `modalized' institutions of \cite{ks}; (2)
  the results of our paper cover polyadic modalities and hybrid
  features while \cite{ks} considers only the unary $\Box$ and
  $\Diamond$.  
  The above mentioned differences reflect very much in the way the
  preservation results are actually obtained. 

\item Derivation of compactness properties for the local and the
  global institutions associated to a stratified institution via
  ultraproducts. 
 
\end{enumerate}
 
\section{Category and institution theoretic preliminaries}

In this section we recall some category and institution theoretic
notions that will be used in the paper. 

We will use the diagrammatic notation for compositions of arrows in
categories, i.e. if $f \co A \ra B$ and $g\co B \ra C$ are arrows then
$f;g$ denotes their composition.  
A \emph{concrete category} $(\A,U)$ consists of a category $\A$ and a
faithful functor $U \co \A \ra \Set$.\footnote{This is most commonly
  accepted definition for concrete categories, although in 
\cite{adamek-herrlich-strecker90} this is called `concrete over
$\Set$' or `construct'.} 
A functor of concrete categories $F \co (\A,U) \ra (\B,V)$ is just a
functor $F \co \A \ra \B$ such that $U=F;V$. 
Let $\CCAT$ denote the category that has the concrete categories as
objects and functors of concrete categories as arrows. 
When it is clear from the context we may omit $U$ and simply refer to
$(\A,U)$ as $\A$. 
This implies also that for $A\in |\A|$ we may write $a\in A$ instead
of $a\in U(A)$.
We use double arrow $\Ra$ rather than single arrow $\ra$ for
natural transformations. 
A functor $\mathcal{U} \co \C \ra \C'$
\emph{preserves} a (co-)limit $\mu$ of a functor $D \co J \ra \C$ when
$\mu\mathcal{U}$ is a (co-)limit of $D;\mathcal{U}$.
It \emph{lifts} a (co-)limit $\mu'$ of  $D;\mathcal{U}$, if there
exists a (co-)limit $\mu$ of $D$ such that
$\mu\mathcal{U} = \mu'$.

The original standard reference for definitions below of institutions
and institution morphisms is \cite{ins}.  

\begin{definition}[Institution]\label{ins-dfn}
An  \emph{institution} $\I = 
\big(\Sign^{\I}, \Sen^{\I}, \Mod^{\I}, \models^{\I}\big)$ consists of 
\begin{itemize}\smallitems

\item a category $\Sign^{\I}$ whose objects are called
  \emph{signatures},

\item a sentence functor $\Sen^{\I} \co \Sign^{\I} \ra \Set$
  defining for each signature a set whose elements are called
  \emph{sentences} over that signature and defining for each signature
  morphism a \emph{sentence translation} function, 

\item a model functor $\Mod^{\I} \co (\Sign^{\I})^{\op} \ra \CAT$
  defining for each signature $\Sigma$ the category
  $\Mod^{\I}(\Sigma)$ of \emph{$\Sigma$-models} and $\Sigma$-model
  homomorphisms, and for each signature morphism $\varphi$ the
  \emph{reduct} functor $\Mod^{\I}(\varphi)$,  

\item for every signature $\Sigma$, a binary 
  \emph{$\Sigma$-satisfaction relation}
  $\models^{\I}_{\Sigma} \subseteq |\Mod^{\I} (\Sigma)|
  \times \Sen^{\I} (\Sigma)$, 

\end{itemize}
such that for each morphism 
$\varphi\co\Sigma \rightarrow \Sigma' \in \Sign^{\I}$, 
the \emph{Satisfaction Condition}
\begin{equation}
M'\models^{\I}_{\Sigma'} \Sen^{\I}(\varphi)(\rho) \text{ if and only if  }
\Mod^{\I}(\varphi)(M') \models^{\I}_\Sigma \rho
\end{equation}
holds for each $M'\in |\Mod^{\I} (\Sigma')|$ and $\rho \in \Sen^{\I} (\Sigma)$.
\[
\xymatrix{
    \Sigma \ar[d]_{\varphi} & \big|\Mod^{\I}(\Sigma)\big|
    \ar@{-}[r]^-{\models^{\I}_{\Sigma}} & 
    \Sen^{\I}(\Sigma) \ar[d]^{\Sen^{\I}(\varphi)} \\
    \Sigma' & \big| \Mod^{\I}(\Sigma')\big| \ar[u]^{\Mod^{\I}(\varphi)} 
    \ar@{-}[r]_-{\models^{\I}_{\Sigma'}} & \Sen^{\I}(\Sigma')
  }
\] 
We may omit the superscripts or subscripts from the notations of the
components of institutions when there is no risk of ambiguity. 
For example, if the considered institution and signature are clear,
we may denote $\models^{\I}_\Sigma$ just by $\models$. 
For $M = \Mod(\varphi)(M')$, we say that $M$ is the
\emph{$\varphi$-reduct} of $M'$ and that $M'$ is a
\emph{$\varphi$-expansion} of $M$.  
\end{definition} 

\begin{notation}
In any institution as above we use the following notations:
\begin{itemize}\smallitems


\item[--] for any $E \subseteq \Sen(\Sigma)$, $E^*$ denotes 
$\{ M \in |\Mod(\Sigma)| \mid M \models_\Sigma \rho$ for each $\rho \in E \}$. 

\item[--] for any $E, E' \subseteq \Sen(\Sigma)$, 
$E \models E'$ denotes $E^* \subseteq E'^*$.


\end{itemize}
\end{notation}

\begin{definition}[Compactness \cite{iimt}]
An institution $\I$ is 
\begin{itemize}\smallitems

\item[--] \emph{m-compact} when for each set $E$ of $\Sigma$-sentences,
  $E^*\not= \emptyset$ if and only if for each $E_0 \subseteq E$
  finite, $E_0^* \not= \emptyset$; 

\item[--] \emph{compact} when for each set $E$ of $\Sigma$-sentences and
  each $\Sigma$-sentence $\rho$, if $E \models_\Sigma \rho$ then there
  exists a finite $E_0 \subseteq E$ such that $E_0 \models_\Sigma
  \rho$. 

\end{itemize}
\end{definition}

\begin{definition}[Morphism of institutions]
Given two institutions $\I_i = (\Sign_i, \Sen_i, \Mod_i , \models_i)$,
with $i \in \{1, 2\}$, an institution morphism $(\Phi, \alpha, \beta)
\co \I_2 \ra \I_1$ consists of 
\begin{itemize}\smallitems

\item a signature functor $\Phi \co \Sign_2 \ra \Sign_1$,

\item a natural transformation $\alpha \co \Sen_1 \Ra \Phi;\Sen_2$,
  and 

\item a natural transformation $\beta\co \Mod_2 \Ra \Phi^{\op} ;
  \Mod_1$ 

\end{itemize}
such that the following satisfaction condition holds for 
any $\I_2$-signature $\Sigma_2$, $\Sigma_2$-model $M_2$ and
$\Phi(\Sigma_2)$-sentence $\rho$:
\[
M_2  \ \models_2 \ \alpha_{\Sigma_2}(\rho) \mbox{ \ if and only if \ }
\beta_{\Sigma_2} (M_2) \ \models_1 \ \rho.
\]

\end{definition}

The literature (e.g. \cite{iimt,sannella-tarlecki-book}) shows myriads
of logical systems from computing or from mathematical logic captured
as institutions. In fact, an informal thesis underlying institution
theory is that any `logic' may be captured by the above
definition. While this should be taken with a grain of salt, it
certainly applies to any logical system based on satisfaction between
sentences and models of any kind.
The institutions introduced in the following couple of examples will
be used intensively in the paper in various ways. 

\begin{example}[Propositional logic ($\PL$)]
\begin{rm}
This is defined as follows.
$\Sign^{\PL} = \Set$, for any set $P$, $\Sen(P)$ is generated by the
grammar
\[
S ::= P \mid S \wedge S \mid \neg S 
\]
and $\Mod^{\PL}(P) = (2^P,\subseteq)$.
For any function $\varphi \co P \ra P'$, $\Sen^{\PL}(\varphi)$
replaces the each element $p\in P$ that occur in a sentence $\rho$ by
$\varphi(p)$, and $\Mod^{\PL}(\varphi)(M') = \varphi;M$ for each
$M'\in 2^{P'}$. 
For any $P$-model $M \subseteq P$ and $\rho\in \Sen^{\PL}(P)$,
$M\models\rho$ is defined by induction on the structure of $\rho$ by
$(M \models p) = (p\in M)$, 
$(M \models \rho_1 \wedge \rho_2) = (M \models \rho_1) \wedge (M
\models \rho_2)$ and 
$(M \models \neg\rho) = \neg(M \models \rho)$.  
\end{rm}
\end{example}

\begin{example}[First order logic ($\FOL$)]
\begin{rm}
For reasons of simplicity of notation, our presentation of first order
logic considers only its single sorted, without equality, variant. 
A detailed presentation of full many sorted first order logic with
equality as institution may be found in numerous works in the
literature (e.g. \cite{iimt}, etc.).
 
The $\FOL$ signatures are pairs 
$(F=(F_n)_{n\in   \omega},P=(P_n)_{n\in \omega})$ where 
$F_n$ and $P_n$ are sets of function symbols and predicate symbols,
respectively, of arity $n$. 
Signature morphisms $\varphi \co (F,P) \ra (F',P')$ are tuples
$(\varphi^{f}=(\varphi^{f}_n)_{n\in
  \omega},\varphi^{p}=(\varphi^{p}_n)_{n\in\omega})$ such that  
$\varphi^{f}_n \co F_n \ra F'_n$ and $\varphi^{p}_n \co P_n \ra
P'_n$. 
Thus $\Sign^{\FOL}=\Set^\omega \times \Set^\omega$. 

For any $\FOL$-signature $(F,P)$, the set $S$ of the $(F,P)$-sentences
is generated by the grammar: 
\begin{equation}\label{fol-grammar}
S ::= \pi(t_1,\dots,t_n) \mid S \wedge S \mid \neg S \mid (\exists x)S'
\end{equation}
where $\pi(t_1,\dots,t_n)$ are the atoms with $\pi\in P_n$ and
$t_1,\dots,t_n$ being terms formed with function symbols from $F$, and
where $S'$ denotes the set of $(F+x,P)$-sentences with $F+x$ denoting
the family of function symbols obtained by adding the single variable
$x$ to $F_0$.

An $(F,P)$-model $M$ is a tuple 
\[
M=(|M|,\{ M_\sigma \co |M|^n \ra |M| \mid \sigma\in F_n, n\in\omega \},
\{ M_\pi \subseteq |M|^n \mid \pi\in P_n, n\in\omega \}).
\]
where $|M|$ is a set called the \emph{carrier of $M$}. 
An $(F,P)$-model homomorphism $h\co M \ra N$ is a function $|M| \ra
|N|$ such that $h(M_\sigma (x_1,\dots,x_n)) = N_\sigma
(h(x_1),\dots,h(x_n))$ for any $\sigma \in F_n$ and $h(M_\pi)
\subseteq N_\pi$ for each $\pi\in P_n$. 

The satisfaction relation $M \models^{\FOL}_{(F,P)} \rho$ is the usual
Tarskian style satisfaction defined on induction on the structure of
the sentence $\rho$. 

Given a signature morphism $\varphi \co (F,P) \ra (F',P')$, the
induced sentence translation $\Sen^{\FOL}(\varphi)$ just replaces the
  symbols of any $(F,P)$-sentence with symbols from $(F',P')$
  according $\varphi$, and the induced model reduct
  $\Mod^{\FOL}(\varphi)(M')$ leaves the carrier set as it is and for
  any $x$ function or predicate symbol of $(F,P)$, it interprets $x$ as
  $M'_{\varphi(x)}$.  

In what follows we shall also consider the following parts (or
`sub-institutions') of $\FOL$ that are determined by restricting the
$\FOL$ signatures as follows:
\begin{itemize}\smallitems

\item $\REL$: no function symbols (hence $\Sign^{\REL} \cong
  \Set^\omega$);

\item $\BREL$: no function symbols and only one binary predicate symbol
  $\lambda$   (hence $\Sign^\BREL \cong \{ \lambda \}$);  

\item $\SETC$: no predicate symbols and no function symbols of arity
  greater than $0$ (hence  $\Sign^{\SETC} \cong \Set$); 

\item $\BRELC$: one binary predicate symbol and no function symbols of arity
  greater than $0$ (hence $\Sign^{\BRELC} \cong \Set$); 


\end{itemize}
\end{rm}
\end{example}
\section{Stratified institutions}

The structure and contents of this section is as follows:  
\begin{enumerate}

\item We recall the definition of stratified institution of
  \cite{strat} and slightly upgrade it; 

\item We provide two canonical extractions of ordinary institutions out
  of stratified institutions, corresponding to the local and global
  satisfaction in modal logic, respectively; 

\item We present a series of examples of modal logical systems
  captured as stratified institutions. 

\end{enumerate}

\subsection{Stratified institutions: the concept} 

Informally, the main idea behind the concept of stratified institution
as introduced in \cite{strat} is to enhance the concept of institution
with `states' for the models.
Thus each model $M$ comes equipped with a \emph{set} $\sem{M}$. 
A typical example is given by the Kripke models, where $\sem{M}$ is
the set of the possible worlds in the Kripke structure $M$.

\begin{definition}[Stratified institution]\label{strat-dfn}
A {\em stratified institution} 
$\I = (\Sign^\I, \Sen^\I, \Mod^\I,\sem{\_}^\I,\models^\I)$
consists of: 
\begin{itemize}\smallitems

\item[--] a category $\Sign^\I$ of signatures,

\item[--] a sentence functor $\Sen^\I \co  \Sign^\I \rightarrow \Set$;

\item[--] a model functor $\Mod^\I \co (\Sign^\I)^{\op} \rightarrow \CAT$; 

\item[--] a ``stratification'' lax natural transformation $\sem{\_}^\I\co
  \Mod^\I \Ra \SET$, where $\SET\co \Sign^\I \ra \CAT$ is a functor
  mapping each signature to $\Set$; and 

\item[--] a satisfaction relation between models and sentences which is
parameterized by model states,\linebreak
$M \ (\models^\I)^w_\Sigma \ \rho$ where $w \in \sem{M}^\I_\Sigma$
such that  
\begin{equation}\label{strat-sat-cond-eq}
\Mod^\I(\varphi)(M) \ \ (\models^\I)^{\sem{M}_\varphi (w)}_\Sigma \ \ \rho
\mbox{ \ if and only if \ } 
M \ \ (\models^\I)^w_{\Sigma'} \ \ \Sen^\I(\varphi)(\rho)
\end{equation}
holds for any signature morphism $\varphi \co \Sigma \ra \Sigma'$,
$\Sigma'$-model $M$, $w\in \sem{M}^\I_{\Sigma'}$, and
$\Sigma$-sentence $\rho$. 
\end{itemize}
Like for ordinary institutions, when appropriate we shall also use
simplified notations without superscripts or subscripts that are clear
from the context.  
\end{definition}

The lax natural transformation property of $\sem{\_}$ is depicted in
the diagram below
\[
\xymatrix @C+2em {
\Sigma''  &  
  \Mod(\Sigma'') \ar[r]^{\sem{\_}_{\Sigma''}} \ar[d]_{\Mod(\varphi')}
  \ar@/^.9pc/[dr]_{\quad} 
  \ar@/_.7pc/[dr]
   &
  \Set
  \ar@{}[ld]^(.35){}="a"^(.62){}="b" 
  \ar@{=>} "a";"b"^{ \ \ \sem{\_}_{\varphi'}}  
  \ar[d]^{=}
     \\
\Sigma' \ar[u]_{\varphi'} &  
  \Mod(\Sigma') \ar[d]_{\Mod(\varphi)} \ar[r]|{\sem{\_}_{\Sigma'}}  
  \ar@/^.9pc/[dr]_{\quad} 
  \ar@/_.7pc/[dr] & 
  \Set  \ar[d]^{=} 
  \ar@{}[ld]^(.35){}="c"^(.62){}="d" 
  \ar@{=>} "c";"d"^{ \ \ \sem{\_}_{\varphi}}  
&  \\
\Sigma \ar[u]_{\varphi} &  
  \Mod(\Sigma) \ar[r]_{\sem{\_}_{\Sigma}} & \Set
}
\]
with the following compositionality property for each $\Sigma''$-model
$M''$: 
\[
\sem{M''}_{(\varphi';\varphi)} =
\sem{M''}_{\varphi'};\sem{\Mod(\varphi')(M'')}_\varphi.
\] 
Moreover the natural transformation property of each
$\sem{\_}_\varphi$ is given by the commutativity of the following
diagram:
\begin{equation}\label{diag1} 
\xymatrix{
M' \ar[d]_{h'}  & & 
  \sem{M'}_{\Sigma'} \ar[r]^-{\sem{M'}_{\varphi}}
  \ar[d]_{\sem{h'}_{\Sigma'}} & 
  \sem{\Mod(\varphi)(M')}_{\Sigma} \ar[d]^{\sem{\Mod(\varphi)(h')}_\Sigma}
 \\
N' & & 
 \sem{N'}_{\Sigma'} \ar[r]_-{\sem{N'}_{\varphi}} & 
  \sem{\Mod(\varphi)(N')}_{\Sigma}
}
\end{equation}

\

The satisfaction relation can be presented as a natural transformation
$\models \co \Sen \Ra \sem{\Mod(\_) \ra \Set}$ where the functor 
$\sem{\Mod(\_) \ra \Set} \co \Sign \ra \Set$ is defined by 
\begin{itemize}\smallitems

\item[--] for each signature $\Sigma\in |\Sign|$, 
$\sem{\Mod(\Sigma) \ra \Set}$ denotes the set of all the mappings 
$f \co |\Mod(\Sigma)| \ra \Set$ such that $f(M) \subseteq
\sem{M}_\Sigma$; and 

\item[--] for each signature morphism $\varphi \co \Sigma \ra \Sigma'$, 
$\sem{\Mod(\varphi) \ra \Set}(f)(M') = 
\sem{M'}_\varphi^{-1}(f(\Mod(\varphi)(M')))$.

\end{itemize}
A straightforward check reveals that the Satisfaction Condition
(\ref{strat-sat-cond-eq}) appears exactly as the naturality property
of $\models$: 
$$\xy
\xymatrix{
\Sigma \ar[d]_{\varphi}  & & 
  \Sen(\Sigma) \ar[r]^-{\models_\Sigma}
  \ar[d]_{\Sen(\varphi)} & 
  \sem{\Mod(\Sigma) \ra \Set} \ar[d]^{\sem{\Mod(\varphi) \ra \Set}}
 \\
\Sigma' & & 
  \Sen(\Sigma') \ar[r]_-{\models_{\Sigma'}} & 
  \sem{\Mod(\Sigma') \ra \Set}
}
\endxy$$

Ordinary institutions are the stratified institutions
for which $\sem{M}_\Sigma$ is always a singleton set. 
In Dfn.~\ref{strat-dfn} we have removed the surjectivity
condition on $\sem{M'}_\varphi$ from the definition of the stratified
institutions of \cite{strat} and will rather make it  explicit when
necessary. 
This is motivated by the fact that most of the results developed do
not depend upon this condition which however holds in all examples
known by us.  
In fact in most of the examples $\sem{M'}_\varphi$ are even
identities, which makes $\sem{\_}$ a strict rather than lax natural
transformation.
A notable exception, when $\sem{\_}$ is a proper lax natural
transformation is given by Ex.~\ref{ofol-ex}. 
Also the definition of stratified institution of \cite{strat} did not
introduce $\sem{\_}$ as a lax natural transformation, but rather as an
indexed family of mappings without much compositionality properties,
which was enough for the developments in \cite{strat}.  

The following very expected property does not follow from the axioms
of Dfn.~\ref{strat-dfn}, hence we impose it explicitly. 
It holds in all the examples discussed in this paper. 

\noindent
\textbf{Assumption:} In all considered stratified institutions the
satisfaction is preserved by model isomorphisms, i.e. for each
$\Sigma$-model isomorphism $h \co M \ra N$, each $w\in
\sem{M}_\Sigma$, and each $\Sigma$-sentence $\rho$, 
\[
M \models^w \rho \mbox{ \ if and only if \ }
N \models^{\sem{h}(w)} \rho.
\]

\subsection{Reducing stratified institutions to ordinary institutions}

The following construction will be used systematically in what
follows for reducing stratified institution theoretic concepts to
ordinary institution theoretic concepts, and consequently for
reusing results from the latter to the former realm. 

\begin{fact}\label{sharp-institution-fact}
Each  stratified institution 
$\I = (\Sign,\Sen,\Mod,\sem{\_},\models)$ determines the
following ordinary institution 
$\I^\sharp = (\Sign,\Sen,\Mod^\sharp,\models^\sharp)$ (called the
\emph{local institution of $\I$}) where
\begin{itemize}\smallitems

\item[--] the objects of $\Mod^\sharp (\Sigma)$ are the pairs $(M,w)$
  such that $M\in |\Mod(\Sigma)|$ and $w\in \sem{M}_\Sigma$;  

\item[--] the $\Sigma$-homomorphisms  $(M,w)\ra (N,v)$ are the pairs
  $(h,w)$ such that $h \co M \ra N$ and $\sem{h}_\Sigma (w) = v$; 

\item[--] for any signature morphism $\varphi\co \Sigma \ra \Sigma'$
  and any $\Sigma'$-model $(M',w')$
\[
\Mod^\sharp (\varphi)(M',w') = 
(\Mod(\varphi)(M'),\sem{M'}_\varphi (w'));
\] 

\item[--] for each $\Sigma$-model $M$, each $w\in \sem{M}_\Sigma$, and
  each $\rho\in \Sen(\Sigma)$
\[
((M,w) \models^\sharp_\Sigma \rho) = (M \models^w_\Sigma \rho).
\]
\end{itemize} 
\end{fact}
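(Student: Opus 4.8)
The plan is to check, in turn, the three things the statement requires: that each $\Mod^\sharp(\Sigma)$ is a category, that $\Mod^\sharp$ is a well-defined functor $(\Sign)^{\op}\ra\CAT$, and that the relations $\models^\sharp_\Sigma$ satisfy the Satisfaction Condition of Dfn.~\ref{ins-dfn}; the components $\Sign$ and $\Sen$ are inherited verbatim from $\I$, so there is nothing to verify about them. For the categorical structure of $\Mod^\sharp(\Sigma)$, I would take the identity on $(M,w)$ to be $(1_M,w)$, which is a legal arrow because $\sem{\_}_\Sigma$ is a functor, whence $\sem{1_M}_\Sigma=1_{\sem{M}_\Sigma}$ and $\sem{1_M}_\Sigma(w)=w$. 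For composition of $(h,w)\co(M,w)\ra(N,v)$ and $(g,v)\co(N,v)\ra(P,u)$ I would set $(h,w);(g,v):=(h;g,w)$; this is again legal since functoriality of $\sem{\_}_\Sigma$ gives $\sem{h;g}_\Sigma(w)=\sem{g}_\Sigma(\sem{h}_\Sigma(w))=\sem{g}_\Sigma(v)=u$. Associativity and the unit laws are then immediate from those in $\Mod(\Sigma)$, since on first components composition in $\Mod^\sharp(\Sigma)$ is exactly that of $\Mod(\Sigma)$, while the second component is always just the source state.

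Next, for the functoriality of $\Mod^\sharp$: on objects, $\Mod^\sharp(\varphi)(M',w')=(\Mod(\varphi)(M'),\sem{M'}_\varphi(w'))$ is a legal $\Sigma$-model of $\I^\sharp$ precisely because the codomain of $\sem{M'}_\varphi$ is $\sem{\Mod(\varphi)(M')}_\Sigma$; on arrows I would put $\Mod^\sharp(\varphi)(h',w'):=(\Mod(\varphi)(h'),\sem{M'}_\varphi(w'))$ for $(h',w')\co(M',w')\ra(N',v')$, where $v'=\sem{h'}_{\Sigma'}(w')$. The one step that uses more than plain functoriality of the data of $\I$ is checking that this is genuinely an arrow $\Mod^\sharp(\varphi)(M',w')\ra\Mod^\sharp(\varphi)(N',v')$ of $\Mod^\sharp(\Sigma)$, i.e. that $\sem{\Mod(\varphi)(h')}_\Sigma(\sem{M'}_\varphi(w'))=\sem{N'}_\varphi(v')$; this is exactly diagram~(\ref{diag1}), the naturality of the lax structure cell $\sem{\_}_\varphi$, evaluated at $w'$. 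Functoriality of $\Mod^\sharp(\varphi)$ itself then reduces to that of $\Mod(\varphi)$ on first components, the second components being carried along trivially. Finally $\Mod^\sharp(1_\Sigma)=1_{\Mod^\sharp(\Sigma)}$ follows from the unit coherence of the lax natural transformation $\sem{\_}$ (so $\sem{M}_{1_\Sigma}=1_{\sem{M}_\Sigma}$), and $\Mod^\sharp(\varphi;\psi)=\Mod^\sharp(\psi);\Mod^\sharp(\varphi)$ from the contravariant functoriality of $\Mod$ together with the compositionality property of $\sem{\_}$ recorded after Dfn.~\ref{strat-dfn}, applied to the second components.

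It then remains to verify the Satisfaction Condition. Given $\varphi\co\Sigma\ra\Sigma'$, a $\Sigma'$-model $(M',w')$ of $\I^\sharp$, and $\rho\in\Sen(\Sigma)$, unfolding the definition of $\models^\sharp$ turns $(M',w')\models^\sharp_{\Sigma'}\Sen(\varphi)(\rho)$ into $M'\models^{w'}_{\Sigma'}\Sen(\varphi)(\rho)$ and turns $\Mod^\sharp(\varphi)(M',w')\models^\sharp_{\Sigma}\rho$ into $\Mod(\varphi)(M')\models^{\sem{M'}_\varphi(w')}_{\Sigma}\rho$; these two statements are equivalent by exactly the stratified Satisfaction Condition~(\ref{strat-sat-cond-eq}) of $\I$ instantiated at the $\Sigma'$-model $M'$ and the state $w'$. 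This completes the verification that $\I^\sharp$ is an institution.

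I expect the only genuine obstacle — and it is a mild one — to be the well-definedness of $\Mod^\sharp(\varphi)$ on homomorphisms: that is the single point where the lax-naturality square~(\ref{diag1}) is actually needed, rather than mere functoriality of ingredients already present in $\I$. Everything else is routine bookkeeping; note in particular that this argument makes no use of the surjectivity condition on $\sem{\_}_\varphi$, nor of the Assumption on preservation of satisfaction under isomorphisms.
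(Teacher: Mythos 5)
Your verification is correct and is exactly the routine check the paper leaves implicit (it states this Fact without proof): you correctly identify the only genuinely non-trivial points, namely that well-definedness of $\Mod^\sharp(\varphi)$ on homomorphisms is precisely the lax-naturality square~(\ref{diag1}) evaluated at $w'$, and that the Satisfaction Condition unfolds to the stratified condition~(\ref{strat-sat-cond-eq}). Your closing observation that neither the surjectivity of the $\sem{\_}_\varphi$ nor the isomorphism Assumption is used is also accurate.
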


The preservation of $\models$ under model isomorphisms imply 
the preservation of $\models^\sharp$ under model isomorphisms.
This follows immediately by noting that $(h,w)$ is a model
isomorphism in $\I^\sharp$ if and only if $h$ is a model isomorphism
in $\I$. 

The following second interpretation of  stratified
institutions as ordinary institutions has been given in
\cite{strat}.
Note that unlike $\I^\sharp$ above, $\I^*$ below shares with $\I$ the
model functor. 

\begin{definition}
For any stratified institution $\I =
(\Sign,\Sen,\Mod,\sem{\_},\models)$ we say that \emph{$\sem{\_}$ is
surjective} when for each
signature morphism $\varphi \co \Sigma \ra \Sigma'$ and each
$\Sigma'$-model $M'$, $\sem{M'}_\varphi \co \sem{M'}_{\Sigma'} \ra
\sem{\Mod(\varphi)(M')}_{\Sigma}$ is surjective.  
\end{definition}

\begin{fact}
Each  stratified institution $\I =
(\Sign,\Sen,\Mod,\sem{\_},\models)$ with $\sem{\_}$ surjective
determines an (ordinary) institution $\I^* =
(\Sign,\Sen,\Mod,\models^*)$ (called the \emph{global institution of
  $\I$}) by defining   
\[
(M \models^*_\Sigma \rho) = 
\bigwedge \{ M \models^w_\Sigma \rho \mid w\in \sem{M}_\Sigma \}. 
\]
\end{fact}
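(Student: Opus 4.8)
### Proof proposal

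\textbf{Overall strategy.} The statement asserts that $\I^* = (\Sign, \Sen, \Mod, \models^*)$ is an institution, so the only real content is verifying the Satisfaction Condition for $\models^*$; the categorical data $(\Sign, \Sen, \Mod)$ is inherited verbatim from $\I$ and is already functorial. So the plan is: fix a signature morphism $\varphi \co \Sigma \ra \Sigma'$, a $\Sigma'$-model $M'$, and a $\Sigma$-sentence $\rho$, and show
\[
M' \models^*_{\Sigma'} \Sen(\varphi)(\rho) \quad\Longleftrightarrow\quad \Mod(\varphi)(M') \models^*_\Sigma \rho .
\]
Unfolding the definition of $\models^*$, the left side is the conjunction of $M' \models^{w}_{\Sigma'} \Sen(\varphi)(\rho)$ over all $w \in \sem{M'}_{\Sigma'}$, and the right side is the conjunction of $\Mod(\varphi)(M') \models^{v}_\Sigma \rho$ over all $v \in \sem{\Mod(\varphi)(M')}_\Sigma$.

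\textbf{Key steps.} First I would rewrite each conjunct on the left using the stratified Satisfaction Condition (\ref{strat-sat-cond-eq}): for every $w \in \sem{M'}_{\Sigma'}$,
\[
M' \models^{w}_{\Sigma'} \Sen(\varphi)(\rho) \quad\Longleftrightarrow\quad \Mod(\varphi)(M') \models^{\sem{M'}_\varphi(w)}_\Sigma \rho .
\]
Hence the left-hand conjunction equals $\bigwedge\{\, \Mod(\varphi)(M') \models^{\sem{M'}_\varphi(w)}_\Sigma \rho \mid w \in \sem{M'}_{\Sigma'} \,\}$, i.e. a conjunction indexed by the \emph{image} of $\sem{M'}_\varphi$ inside $\sem{\Mod(\varphi)(M')}_\Sigma$. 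The right-hand conjunction is indexed by the \emph{whole} set $\sem{\Mod(\varphi)(M')}_\Sigma$. These two conjunctions coincide precisely because $\sem{\_}$ is surjective: $\sem{M'}_\varphi \co \sem{M'}_{\Sigma'} \ra \sem{\Mod(\varphi)(M')}_\Sigma$ is onto, so every $v$ appearing on the right is of the form $\sem{M'}_\varphi(w)$ for some $w$, and conversely. That gives the equivalence, and in fact the equality of the two truth values.

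\textbf{Main obstacle.} There is essentially no deep obstacle here; the one point deserving care is the role of the surjectivity hypothesis. Without it, one would only get the implication from right to left (a conjunction over a larger index set implies the conjunction over its image), but not the converse — a sentence could hold at every world reachable via $\sem{M'}_\varphi$ yet fail at some world of $\Mod(\varphi)(M')$ not in that image. So I would make explicit in the write-up that surjectivity of $\sem{M'}_\varphi$ is exactly what makes the two indexing sets interchangeable for the purpose of the conjunction. A secondary, purely routine check is that $\Mod$ as carried over is still a functor into $\CAT$ and that $\models^*_\Sigma \subseteq |\Mod(\Sigma)| \times \Sen(\Sigma)$ is a well-defined relation — both immediate since the data is reused unchanged from $\I$. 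I would also remark, optionally, that preservation of $\models^*$ under model isomorphisms follows from the Assumption on $\models$, using that $\sem{h}_\Sigma$ is a bijection when $h$ is an isomorphism, so the conjunction defining $\models^*$ is reindexed bijectively.
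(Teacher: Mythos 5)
Your proposal is correct and is exactly the argument the paper intends for this Fact (which it states without an explicit proof): verify the Satisfaction Condition for $\models^*$ by rewriting each conjunct via the stratified Satisfaction Condition (\ref{strat-sat-cond-eq}) and then use surjectivity of $\sem{M'}_\varphi$ to identify the image-indexed conjunction with the conjunction over all of $\sem{\Mod(\varphi)(M')}_\Sigma$. Your remark that without surjectivity only the direction from $\Mod(\varphi)(M') \models^*_\Sigma \rho$ to $M' \models^*_{\Sigma'} \Sen(\varphi)(\rho)$ survives correctly pinpoints why the hypothesis is needed.
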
 

\begin{fact}
Let $\I$ be a  stratified institution $\I$ with $\sem{\_}$ surjective.
For each $E\subseteq \Sen(\Sigma)$ and each $\rho\in \Sen(\Sigma)$, we
have that 
\[
E \models^\sharp \rho \mbox{ \ implies \ } E \models^* \rho.
\]
\end{fact}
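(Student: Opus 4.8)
The plan is to unfold the two satisfaction relations $\models^\sharp$ and $\models^*$ and reduce the claim to a pointwise (state-by-state) argument. Recall from Fact~\ref{sharp-institution-fact} that $E \models^\sharp \rho$ means: for every $\Sigma$-model $M$ and every $w \in \sem{M}_\Sigma$, if $M \models^w_\Sigma \rho'$ for all $\rho' \in E$, then $M \models^w_\Sigma \rho$. On the other hand, $E \models^* \rho$ means: for every $\Sigma$-model $M$, if $M \models^*_\Sigma \rho'$ for all $\rho' \in E$, then $M \models^*_\Sigma \rho$, where by definition $M \models^*_\Sigma \psi$ abbreviates ``$M \models^w_\Sigma \psi$ for all $w \in \sem{M}_\Sigma$''.

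First I would fix a $\Sigma$-model $M$ such that $M \models^*_\Sigma \rho'$ for every $\rho' \in E$, and aim to show $M \models^*_\Sigma \rho$; for this I would fix moreover an arbitrary state $w \in \sem{M}_\Sigma$ and establish $M \models^w_\Sigma \rho$. The key observation is that $M \models^*_\Sigma \rho'$ forces $M \models^w_\Sigma \rho'$ for this particular $w$, and this holds for every $\rho' \in E$. Hence the pair $(M,w)$, viewed as an object of $\Mod^\sharp(\Sigma)$, satisfies every sentence of $E$ in the local institution $\I^\sharp$, i.e. $(M,w) \models^\sharp_\Sigma \rho'$ for all $\rho' \in E$. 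Applying the hypothesis $E \models^\sharp \rho$ yields $(M,w) \models^\sharp_\Sigma \rho$, which by the definition of $\models^\sharp$ is precisely $M \models^w_\Sigma \rho$. Since $w \in \sem{M}_\Sigma$ was arbitrary, $M \models^*_\Sigma \rho$, which is what was required.

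There is essentially no obstacle here: the argument is a direct translation between the two canonical institutions attached to $\I$, and the surjectivity of $\sem{\_}$ is used only insofar as it guarantees that $\I^*$ (hence $\models^*$) is well defined — it plays no role in the entailment argument itself. The one mild subtlety worth flagging is the overloaded $(\_)^*$ notation: $E^*$ computed in $\I^\sharp$ is the class of pairs $(M,w)$ satisfying $E$, whereas $E^*$ computed in $\I^*$ is the class of models satisfying $E$ globally; I would therefore keep these two meanings explicit rather than manipulate $(\_)^*$ formally.
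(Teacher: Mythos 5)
Your proof is correct: the paper states this Fact without proof, and your direct unfolding of $\models^\sharp$ and $\models^*$ (fix a model $M$ globally satisfying $E$, fix an arbitrary state $w$, pass to the pair $(M,w)$ in $\I^\sharp$, apply $E\models^\sharp\rho$, and conclude pointwise) is exactly the straightforward argument the paper intends. Your side remarks are also accurate: surjectivity of $\sem{\_}$ is needed only so that $\I^*$ is a well-defined institution, and the empty-state case is handled vacuously by the ``for all $w$'' quantification.
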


The institutions $\I^\sharp$ and $\I^*$ represent generalizations of
the concepts of local and global satisfaction, respectively, from modal
logic (e.g. \cite{blackburn-rijke-venema2002}). 

\subsection{Examples of stratified institutions}\label{ex-sect}

\begin{example}[Modal propositional logic ($\MPL$)]\label{mpl-ex}
\begin{rm}
This is the most common form of modal logic
(e.g. \cite{blackburn-rijke-venema2002}, etc.).
 
Let $\Sign^{\MPL} = \Set$.
For any signature $P$, commonly referred to as `set of propositional
variables', the set of its sentences 
$\Sen^{\MPL}(P)$ is the set $S$ defined by the following grammar
\begin{equation}\label{mpl-grammar-equation}
S \ ::= \ P \mid S \wedge S \mid \neg S \mid \Diamond S
\end{equation}
A $P$-model is Kripke structure $(W,M)$ where 
\begin{itemize}\smallitems

\item $W = (|W|,W_\lambda)$ consists of set (of `possible worlds')
  $|W|$ and an `accesibility' relation $W_\lambda \subseteq |W| \times
  |W|$; and 

\item $M \co |W| \ra 2^P$. 

\end{itemize}
A homomorphism $h \co (W,M) \ra (V,N)$ between Kripke structures is a
homomorphism of binary relations $h \co W \ra V$ (i.e. $h \co |W| \ra
|V|$ such that $h(W_\lambda) \subseteq V_\lambda$) and such that for
each $w\in |W|$, $M^w \subseteq N^{h(w)}$. 

The satisfaction of any $P$-sentence $\rho$ in a Kripke
structure $(W,M)$ at $w\in |W|$ is defined by recursion on the
structure of $\rho$: 
\begin{itemize}\smallitems

\item $((W,M) \models_P^w \pi) = (\pi \in M^w)$;

\item $((W,M) \models_P^w \rho_1 \wedge \rho_2) = ((W,M) \models_P^w
  \rho_1) \wedge ((W,M) \models_P^w \rho_2)$; 

\item $((W,M) \models_P^w \neg \rho) = 
  \neg ((W,M) \models_P^w \rho)$; and 

\item $((W,M) \models_P^w \Diamond \rho) = 
\bigvee_{(w,w')\in W_\lambda}((W,M) \models_P^{w'} \rho)$. 

\end{itemize}
For any function $\varphi \co P \ra P'$ the $\varphi$-translation of a
$P$-sentence just replaces each $\pi\in P$ by $\varphi(\pi)$ and the
$\varphi$-reduct of a $P'$-structure $(W,M')$ is the $P$-structure
$(W,M)$ where for each $w\in |W|$, $M^w = \varphi;M'^w$. 

The stratification is defined by $\sem{(W,M)}_P = |W|$.   

Various `sub-institutions' of $\MPL$ are obtained by restricting the
semantics to particular classes of frames.
Important examples are ${\MPL}t$, ${\MPL}s4$, and ${\MPL}s5$ which are
obtained by restricting the frames $W$ to those which are
respectively, reflexive, preorder, or equivalence (see
e.g. \cite{blackburn-rijke-venema2002}).     
\end{rm}
\end{example} 

\begin{example}[First order modal logic ($\MFOL$)]
\begin{rm}
First order modal logic \cite{fitting-mendelsohn98} extends classical
first order logic with modalities in the same way 
propositional modal logic extends classical propositional logic.
However there are several variants that differ slightly in the
approach of the quantifications. 
Here we present a capture of one of the most common variants of first
order modal logic as a stratified institution. 

$\MFOL$ has the category of signatures of $\FOL$ but for the sentences
adds $S ::= \Diamond S$ to the $\FOL$ grammar (\ref{fol-grammar}). 
The $\MFOL$ $(F,P)$-models upgrade the $\MPL$ Kripke structures
$(W,M)$ to the first order situation by letting $M \co |W| \ra
|\Mod^{\FOL}(F,P)|$ such that the following sharing conditions hold:
for any $i,j \in |W|$, $|M^i| = |M^j|$ and also $M^i_x = M^j_x$ for
each constant $x\in F_0$. 
The concept of $\MFOL$-model homomorphism is also an upgrading of the
concept of $\FOL$-model homomorphism as follows: $h \co (W,M) \ra
(V,N)$ is pair $(h_0,h_1)$ where $h_0 \co W \ra V$ is a homomorphism
of binary relations (like in $\MPL$) and $h_1 \co M^w \ra N^{h_0 (w)}$
is an $(F,P)$-homomorphism of $\FOL$-models for each $w\in |W|$. 

The satisfaction $(W,M) \models^{\MFOL}_{(F,P)} \rho$ is defined by
recursion on the structure of $\rho$, like in $\MPL$ for $\wedge$,
$\neg$, and $\Diamond$, for the atoms the $\FOL$ satisfaction relation
is used, and for the quantifier case $(W,M) \models_{(F,P)} (\exists
x) \rho$ if and only if there is a valuation of $x$ into $|M|$ such
that $(W,M') \models_{(F+x,P)} \rho$ for the corresponding expansion 
$(W,M')$ of $(W,M)$ to $(F\!+\!x,P)$. 
(This makes sense because in any $\MFOL$ Kripke structure the
interpretations of the carriers and of the constants are shared.) 

The translation of sentences and the model reducts corresponding to an
$\MFOL$ signature morphism are obtained by the obvious blend of the
corresponding translations and reducts, respectively, in $\MPL$ and
$\FOL$. 

The stratification is like in $\MPL$, with $\sem{(W,M)}_{(F,P)} = |W|$.   

In the institution theory literature
(e.g. \cite{iimt,ks,HybridIns,QVHybrid}) first order modal logic is
often considered in a more general form in which the symbols that have
shared interpretations are `user defined' rather than being
`predefined' like here. 
In short this means that the signatures exhibit designated symbols
(sorts, function, or predicate) that are `rigid' in the sense that in
a given Kripke structure they share the same interpretations across
the possible worlds. 
For the single reason of making the reading easier we stick here with
a simpler variant that has constants and the single sort being
predefined as rigid. 
\end{rm}
\end{example}

\begin{example}[Hybrid logics ($\HPL$, $\HFOL$)]\label{hpl-ex}
\begin{rm}
Hybrid logics \cite{prior,blackburn2000} refine modal logics by adding
explicit syntax for the possible worlds. 
Our presentation of hybrid logics as stratified institutions is
related to the recent institution theoretic works on hybrid logics
\cite{HybridIns,QVHybrid}. 

The refinement of modal logics to hybrid ones is achieved by adding a
set component ($\Nom$) to the signatures for the so-called `nominals'
and by adding to the respective grammars 
\begin{equation}\label{hybrid-grammar}
S ::= \sen{i} \mid @_i S \mid (\exists i) S'
\end{equation}
where $i\in \Nom$ and $S'$ is the set of the sentences of the
signature that extends $\Nom$ with the nominal variable $i$. 
The models upgrade the respective concepts of Kripke structures to
$(W,M)$ by adding to $W$ interpretations of the nominals, i.e. 
$W = (|W|,\{ W_i \in |W| \mid i\in \Nom \},W_\lambda)$.
The satisfaction relations between models (i.e. Kripke structures) and
sentences extend the satisfaction relations of the corresponding
non-hybrid modal institutions with 
\begin{itemize}\smallitems

\item $((W,M) \models^w \sen{i}) = (W_i = w)$; 

\item $((W,M) \models^w @_i \rho) = ((W,M) \models^{W_i} \rho)$; and 

\item $((W,M) \models^w (\exists i)\rho) = 
\bigvee \{(W',M) \models^w \rho \mid W' \mbox{ \ expansion of \ }W
\mbox{ \ to \ } \Nom\!+\!i \}$. 

\end{itemize}
Note that quantifiers over nominals allow us to simulate the binder
operator $(\downarrow \rho)$ of \cite{goranko96} by 
$(\forall i) i\impl\rho$.  

The translation of sentences and model reducts corresponding to
signature morphisms are canonical extensions of the corresponding
concepts from $\MPL$ and $\MFOL$. 

The stratifications of $\HPL$ and $\HFOL$ are like for $\MPL$ and
$\MFOL$, i.e. $\sem{(W,M)}_{(\Nom,\Sigma)} = |W|$. 
\end{rm}
\end{example}

\begin{example}[Polyadic modalities ($\MMPL$, $\MHPL$, $\MMFOL$,
  $\MHFOL$)] 
\begin{rm}
Multi-modal logics (e.g. \cite{gabbay-mml2003}) exhibit several
modalities instead of only the traditional $\Diamond$ and $\Box$ and
moreover these may have various arities.   
If one considers the sets of modalities to be variable then they have
to be considered as part of the signatures. 
We may extend each of $\MPL$, $\HPL$, $\MFOL$ and $\HFOL$ to the
multi-modal case, 
\begin{itemize}\smallitems

\item by adding an `$\mathcal{M}$' in front of each of
these names;
 
\item by adding a component $\Lambda = (\Lambda_n)_{n\in\omega}$ to
  the respective signature concept (with $\Lambda_n$ standing for the
  modalities symbols of arity $n$), e.g. an $\MHFOL$ signature would
  be a tuple of the form $(\Nom,\Lambda,(F,P))$; 

\item by replacing in the respective
  grammars the rule $S ::= \Diamond S$ by the set of rules 
\[
\{ S ::= \langle \lambda \rangle S^n \mid \lambda\in \Lambda_{n+1}, n\in\omega \};
\]
\item by replacing the binary relation $W_\lambda$ from the models
  $(W,M)$ with a set of interpretations 
  $\{ W_\lambda \subseteq |W|^n \mid \lambda\in\Lambda_n, n\in\omega \}$.   

\end{itemize}
Consequently the definition of the satisfaction relation gets upgraded
with 
\[
\mbox{ \ for each \ }
\lambda\in\Lambda_{n+1}, \  
((W,M) \models^w \langle\lambda\rangle(\rho_1,\dots,\rho_n)) =
\big(\bigvee_{(w,w_1,\dots,w_n)\in W_\lambda}\bigwedge_{1\leq i\leq n}
(W,M)\models^{w_i} \rho_i\big).
\]
The stratification is the same like in the previous examples,
i.e. $\sem{(W,M)}_{(\Nom,\Lambda,\Sigma)} =|W|$. 
\end{rm}
\end{example}

\begin{example}[Modalizations of institutions; $\HHPL$]
\begin{rm}
In a series of works \cite{ks,HybridIns,QVHybrid} modal logic and
Kripke semantics are developed by abstracting away details that do not
belong to modality, such as sorts, functions, predicates, etc. 
This is achieved by extensions of abstract institutions (in the
standard situations meant in principle to encapsulate the atomic part
of the logics) with the essential ingredients of modal logic and
Kripke semantics. 
The result of this process, when instantiated to various concrete
logics (or to their atomic parts only) generate uniformly a wide range
of hierarchical combinations between various flavours of modal logic
and various other logics. 
Concrete examples discussed in \cite{ks,HybridIns,QVHybrid} include
various modal logics over non-conventional structures of relevance in
computing science, such as partial algebra, preordered algebra, etc. 
Various constraints on the respective Kripke models, many of them
having to do with the underlying non-modal structures, have also been
considered. 
All these arise as examples of stratified institutions like the
examples presented above in the paper. 
This great multiplicity of non-conventional modal logics constitute
an important range of applications for this work.

An interesting class of examples that has emerged quite smoothly out
of the general works on hybridization\footnote{I.e. Modalization including
  also hybrid logic features.} of institutions is that of
multi-layered hybrid logics that provide a logical base for specifying
hierarchical transition systems (see \cite{madeira-phd}).  
As a single simple example let us present here the double layered
hybridization of propositional logic, denoted $\HHPL$.\footnote{Other
  interesting examples that may be obtained by double or multiple
  hybridizations of logics would be $\HHFOL$, $\HHHPL$, etc., and also
  their polyadic multi-modalities extensions.}   
This amounts to a hybridization of $\HPL$, its models thus being
``Kripke structures of Kripke structures''.  

The $\HHPL$ signatures are triples $(\Nom^0,\Nom^1,P)$ with
$\Nom^0$ and $\Nom^1$ denoting the nominals of the first and second
layer of hybridization, respectively. 
The $(\Nom^0,\Nom^1,P)$-sentences are built over the two hybridization
layers by taking the $(\Nom^0,P)$-sentences as atoms in the grammar
for the $\HPL$ sentences with nominals from $\Nom^1$.   
In order to prevent potential ambiguities, in general we tag the
symbols of the respective layers of hybridization by the superscripts
$0$ (for the first layer) and $1$ (for the second layer).  
This convention should include nominals and connectives ($\Diamond$,
$\wedge$, etc.) as well as quantifiers. 
For instance, the expression  $@_{j^1} k^0 \wedge^1 \Box^1 \rho$ is a
sentence of $\HHPL$ where the symbols $k$ and $j$ represent nominals
of the first and second level of hybridization and $\rho$ a $\PL$
sentence. 
On the other hand, according to this tagging convention the expression
$@_{j^0} k^1 \wedge^1 \Box^1 \rho$ would not parse. 

Our tagging convention extends also to $\HHPL$ models.
A $(\Nom^0,\Nom^1,P)$-model is a pair $(W^1,M^1)$ with $W^1$ being a
$\Mod^\BRELC (\lambda)$ model and $M^1 = ((M^1)^w)_{w\in |W^1|}$ where
$(M^1)^w$ is a $(\Nom^0,P)$-model in $\HPL$, denoted $((W^0)^w,(M^0)^w)$. 
We also require that for all $w,w'\in |W^1|$, we have that
$|(W^0)^w|=|(W^0)^{w'}|$ and $(W^0)^w_i = (W^0)^{w'}_i$ 
for each $i\in \Nom^0$. 

These definitions extend in the obvious way to signature morphisms,
sentence translations, model reducts and satisfaction relation. 
We leave these details as exercise for the reader. 
Then $\HHPL$ has the same stratified structure like $\HPL$ and
$\HFOL$, namely $\sem{(W^1,M^1)}_{(\Nom^0,\Nom^1,P)} = |W^1|$.

It is easy to see that in $\HHPL$ the semantics of the Boolean
connectors and of the quantifications with nominals of the lower layer
is invariant with respect to the hybridization layer; this means that
in these cases the tagging is not necessary.
For example if $\rho$ is an $\HPL$ sentence then $(\forall^1 i^0)\rho$
and $(\forall^0 i^0)\rho$ are semantically equivalent, while if $\rho$
is not an $\HPL$ sentence (which means it has some ingredients from
the second layer of hybridization) then $(\forall^0 i^0)\rho$ would
not parse. 
In both cases just using the notation $(\forall i^0)$ would not carry
any ambiguities. 
\end{rm}  
\end{example}

The next series of examples include multi-modal first order logics
whose semantics are given by ordinary first order rather than
Kripke structures.   

\begin{example}[Multi-modal open first order logic ($\OFOL$, $\MOFOL$,
  $\HOFOL$, $\HMOFOL$)]\label{ofol-ex}
\begin{rm}
The stratified institution $\OFOL$ is a the $\FOL$ instance of $St(\I)$, the
`internal stratification' abstract example developed in \cite{strat}. 
An $\OFOL$ signature is a pair $(\Sigma,X)$ consisting of $\FOL$
signature $\Sigma$ and a finite block of variables. 
An $\OFOL$ signature morphism $\varphi \co (\Sigma,X) \ra
(\Sigma',X')$ is just a $\FOL$ signature morphism $\varphi \co \Sigma
\ra \Sigma'$ such that $X \subseteq X'$. 

We let $\Sen^\OFOL((F,P),X)=\Sen^\FOL(F+X,P)$ and 
$\Mod^\OFOL ((F,P),X) = \Mod^\FOL(F,P)$. 

For each $((F,P),X)$-model $M$, each $w\in |M|^X$, and each
$((F,P),X)$-sentence $\rho$ we define 
\[
(M (\models^\OFOL_{(F,P),X})^w \rho) = (M^w \models^\FOL_{(F+X,P)} \rho)
\]
where $M^w$ is the expansion of $M$ to $(F\!+\!X,P)$ such that 
$M^w_X = w$. 
This is a stratified institution with $\sem{M}_{\Sigma,X} = |M|^X$
for each $(\Sigma,X)$-model $M$.
For any signature morphism $\varphi \co (\Sigma,X) \ra (\Sigma',X')$
and any $(\Sigma',X')$-model $M'$, $\sem{M'}_\varphi \co |M'|^{X'} \ra
|M'|^X$ is defined by $\sem{M'}_\varphi (a) = a|_X$ (i.e. the
restriction of $a$ to $X$). 
Note that $\sem{M'}_\varphi$ is surjective and that this provides an
example when $\sem{\_}$ is a proper lax natural transformation. 

We may refine $\OFOL$ to a multi-modal logic ($\MOFOL$) by adding 
\[
\{ S ::= \langle\pi\rangle S^n \mid \pi\in P_{n+1}, n\in\omega \}
\]
to the grammar defining each $\Sen^\OFOL ((F,P),X)$ and consequently  
by extending the definition of the satisfaction relation with 
\begin{itemize}\smallitems

\item 
$(M \models^w \langle\pi\rangle (\rho_1,\dots,\rho_n)) =
\bigvee_{(w,w_1,\dots,w_n)\in (M^X)_\pi} \bigwedge_{1\leq i\leq
  n}(M\models^{w_i}\rho_i)$ for each $\pi\in P_{n+1}$, $n\in\omega$.

\end{itemize}
(Here and elsewhere $M^X$ denotes the $X$-power of $M$ in the category
of $\FOL$ $(F,P)$-models.)

Or else we may refine $\OFOL$ with nominals ($\HOFOL$) by adding the
grammar for nominals (\ref{hybrid-grammar}), for each constant $i\in
F_0$, to the grammar defining each $\Sen^\OFOL ((F,P),X)$ and
consequently extending the definition of the satisfaction relation
with   
\begin{itemize}\smallitems

\item $(M\models_{(F,P),X}^w \sen{i}) = ((M^X)_i = w)$;

\item $M \models_{(F,P),X}^w @_i \rho) = 
(M \models_{(F,P),X}^{(M^X)_i} \rho)$;

\item $(M\models_{(F,P),X}^w (\exists i)\rho) = 
\bigvee \{M' \models_{(F\!+\!i,P),X}^w \rho \mid M' \mbox{ \ expansion of \ }M
\mbox{ \ to \ } (F\!+\!i,P) \}$.

\end{itemize}
We can also have $\HMOFOL$ as the blend between $\HOFOL$ and
$\MOFOL$. 
\end{rm}
\end{example}

\section{The logic of stratified institutions}
\label{logic-sect}

We start the section by extending the definition of the semantics of
Boolean connectives and quantifiers from ordinary institutions (see
\cite{tarlecki86s,upins,iimt} etc.) to stratified institutions. 
After this, based on the stratified structure of stratified
institutions, we define the semantics of modalities, nominals, $@$ at
the level of abstract stratified institutions. 
In each of these cases a minimally sufficient additional structure is
employed. 

\begin{definition}\label{int-logic-dfn}
In any stratified institution 
$\I = (\Sign,\Sen,\Mod,\sem{\_},\models)$
\begin{itemize}\smallitems

\item a $\Sigma$-sentence $\rho_1 \!\wedge\! \rho_2$ is an
  \emph{external conjunction} of $\Sigma$-sentences $\rho_1$ and
  $\rho_2$ when for each $\Sigma$-model $M$ and each $w\in
  \sem{M}_\Sigma$,  
\vspace{-.6em}
\[
(M \models^w \rho_1 \!\wedge\! \rho_2) = 
(M \models^w \rho_1) \wedge (M \models^w \rho_2);
\]
\item a $\Sigma$-sentence $\rho_1 \!\impl\! \rho_2$ is an
  \emph{external implication} of $\Sigma$-sentences
  $\rho_1$ and $\rho_2$ when for each $\Sigma$-model $M$ and each
  $w\in \sem{M}_\Sigma$,     
\vspace{-.6em}
\[
(M \models^w \rho_1 \!\impl\! \rho_2) = 
(M \models^w \rho_1) \impl (M \models^w \rho_2);
\]
\item a $\Sigma$-sentence $\rho_1 \!\vee\! \rho_2$ is an
  \emph{external disjunction} of $\Sigma$-sentences  $\rho_1$ and
  $\rho_2$ when for each $\Sigma$-model $M$ and each $w\in \sem{M}_\Sigma$,    
\vspace{-.6em}
\[
(M \models^w \rho_1 \!\vee\! \rho_2) = 
(M \models^w \rho_1) \vee (M \models^w \rho_2);
\]
\item a $\Sigma$-sentence $\neg\rho$ is the \emph{external negation}
  of a $\Sigma$-sentence $\rho$ when for each  $\Sigma$-model $M$
  and each $w\in \sem{M}_\Sigma$,   
\vspace{-.6em}
\[
(M \models^w \neg\rho) = \neg (M \models^w \rho) 
\] 
\item a $\Sigma$-sentence $(\forall \chi)\rho'$ is an 
  \emph{external universal $\chi$-quantification} of a
  $\Sigma'$-sentence $\rho'$ for $\chi \co \Sigma \ra \Sigma'$
  signature morphism when for any 
  $\Sigma$-model $M$ and each $w\in \sem{M}_\Sigma$
\[
(M \models^w_\Sigma (\forall \chi)\rho') = 
\bigwedge_{\Mod(\chi)(M') = M} \big(\bigwedge_{w'\in {\sem{M'}_\chi^{-1}(w)}}
(M'\models^{w'}_{\Sigma'} \rho')\big) 
\]
\item a $\Sigma$-sentence $(\exists \chi)\rho'$ is an 
  \emph{external existential $\chi$-quantification} of a
  $\Sigma'$-sentence $\rho'$ for $\chi \co \Sigma \ra \Sigma'$
  signature morphism when for any 
  $\Sigma$-model $M$ and each $w\in \sem{M}_\Sigma$  
\[
(M \models^w_\Sigma (\exists \chi)\rho') = 
\bigvee_{\Mod(\chi)(M') = M} \big(\bigvee_{w'\in {\sem{M'}_\chi^{-1}(w)}}
(M'\models^{w'}_{\Sigma'} \rho')\big) 
\]
\end{itemize} 
\end{definition}

\begin{remark}
\begin{rm}
In Dfn.~\ref{int-logic-dfn} the notations $\rho_1 \wedge \rho_2$,
$\neg \rho$, etc. are meta-notations in the sense that they may not
correspond to how the actual sentences appear in $\Sen$. 
For example in $\Sen^\MPL(\{ \pi,\pi' \})$ (see Ex.~\ref{mpl-ex}),
according to the respective grammar, there is no actual sentence such
as $\pi \impl\pi'$, however $\MPL$ has implications, in the realm of
the meta notations  $\pi \impl\pi'$ corresponding to the actual
sentence  $\neg (\pi \wedge \neg\pi')$.   
So, these meta-notations of Dfn.~\ref{int-logic-dfn} rather denote
semantical equivalence classes of sentences\footnote{Classes of
  sentences that hold exactly in the same models.}, which goes well
with our work since here we never need to distinguish between
semantically equivalent sentences.
We will keep employing such meta-notations also below in the paper
when introducing the semantics for modalities 
(Dfn.~\ref{mod-strat-institution-dfn}) or for the hybrid features
(Dfn~\ref{hybrid-strat-institution-dfn}).    
\end{rm}
\end{remark}

On the one hand, the concepts of Boolean connectives and
quantifications in ordinary institutions (e.g. from
\cite{tarlecki86a,upins,iimt} etc.) arise as an instance of
Dfn.~\ref{int-logic-dfn} when the underlying set of  each
$\sem{M}_\Sigma$ is a singleton set.  
On the other hand, Fact \ref{logic-sharp-fact} below shows that
Dfn.~\ref{int-logic-dfn} is not a proper generalization of the
corresponding ordinary institution theoretic concepts since
the stratified institution theoretic concepts of Boolean connectives
and quantifications may also be regarded as corresponding instances  
of the respective ordinary institution theoretic concepts.
The importance of Dfn.~\ref{int-logic-dfn} resides thus in the fact
that it gives an explicit account of how Boolean connectors and
quantifications reflect in a stratified setup.   

\begin{fact}\label{logic-sharp-fact}
When they exist, the conjunctions, disjunctions, implications,
negations, universal/existential $\chi$-quantifications coincide in
$\I$ and $\I^\sharp$.  
\end{fact}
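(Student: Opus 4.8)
The plan is to unwind both sides of each equivalence and observe that, after substituting the definitions of $\Mod^\sharp$ and $\models^\sharp$ from Fact~\ref{sharp-institution-fact}, the defining condition of each connective or quantification in the ordinary institution $\I^\sharp$ becomes \emph{literally} the defining condition of the corresponding external connective or quantification in $\I$ from Dfn.~\ref{int-logic-dfn}. The only input needed is the bookkeeping from Fact~\ref{sharp-institution-fact}: a $\Sigma$-model of $\I^\sharp$ is exactly a pair $(M,w)$ with $M\in|\Mod(\Sigma)|$ and $w\in\sem{M}_\Sigma$, the satisfaction is $((M,w)\models^\sharp_\Sigma\rho)=(M\models^w_\Sigma\rho)$, and the reduct is $\Mod^\sharp(\chi)(M',w')=(\Mod(\chi)(M'),\sem{M'}_\chi(w'))$.

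For the Boolean connectives the verification is immediate. Take conjunction: $\rho_1\wedge\rho_2$ is a conjunction of $\rho_1,\rho_2$ in $\I^\sharp$ iff for every $\Sigma$-model $(M,w)$ of $\I^\sharp$ one has $((M,w)\models^\sharp\rho_1\wedge\rho_2)=((M,w)\models^\sharp\rho_1)\wedge((M,w)\models^\sharp\rho_2)$; rewriting $\models^\sharp$ as $\models^w$ and noting that $(M,w)$ ranges over all pairs with $M\in|\Mod(\Sigma)|$ and $w\in\sem{M}_\Sigma$, this is exactly the requirement of Dfn.~\ref{int-logic-dfn} that $\rho_1\wedge\rho_2$ be an external conjunction in $\I$. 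Since the two predicates are the same, the property holds in $\I$ iff it holds in $\I^\sharp$. The same rewriting handles disjunction, implication and negation verbatim.

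The one case with a little bookkeeping is the $\chi$-quantifications, for $\chi\co\Sigma\ra\Sigma'$. Here one compares the single $\Mod^\sharp(\chi)$-fibre above a $\Sigma$-model $(M,w)$ of $\I^\sharp$ with the nested fibres used in Dfn.~\ref{int-logic-dfn}. From $\Mod^\sharp(\chi)(M',w')=(\Mod(\chi)(M'),\sem{M'}_\chi(w'))$ one gets that $\Mod^\sharp(\chi)(M',w')=(M,w)$ iff $\Mod(\chi)(M')=M$ and $w'\in\sem{M'}_\chi^{-1}(w)$, hence
$$\bigwedge_{\Mod^\sharp(\chi)(M',w')=(M,w)}\big((M',w')\models^\sharp_{\Sigma'}\rho'\big)=\bigwedge_{\Mod(\chi)(M')=M}\big(\bigwedge_{w'\in\sem{M'}_\chi^{-1}(w)}(M'\models^{w'}_{\Sigma'}\rho')\big).$$
The left-hand side is the value witnessing that $(\forall\chi)\rho'$ is a universal $\chi$-quantification in $\I^\sharp$, the right-hand side is the value appearing in Dfn.~\ref{int-logic-dfn}, and since $((M,w)\models^\sharp(\forall\chi)\rho')=(M\models^w(\forall\chi)\rho')$ the two defining conditions coincide; the existential case is identical with $\bigvee$ in place of $\bigwedge$. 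The main (very mild) obstacle is simply to keep straight that the single $\Mod^\sharp(\chi)$-fibre decomposes into the pair of nested fibres of Dfn.~\ref{int-logic-dfn}; beyond that the proof is a direct substitution.
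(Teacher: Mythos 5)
Your proof is correct and is exactly the verification the paper intends: Fact~\ref{logic-sharp-fact} is stated without proof precisely because it reduces to the substitution of the definitions of $\Mod^\sharp$ and $\models^\sharp$ from Fact~\ref{sharp-institution-fact} into Dfn.~\ref{int-logic-dfn}, which is what you carry out. Your observation that the single $\Mod^\sharp(\chi)$-fibre over $(M,w)$ decomposes as the pairs $(M',w')$ with $\Mod(\chi)(M')=M$ and $w'\in\sem{M'}_\chi^{-1}(w)$ is the only point needing care, and you handle it correctly.
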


\begin{corollary}
In any  stratified institution we have the following:
\begin{enumerate}

\item $\neg(\neg \rho_1 \wedge \neg \rho_2)$ is an external disjunction
  $\rho_1 \!\vee\! \rho_2$; 

\item $\neg \rho_1 \vee \rho_2$ is an external implication $\rho_1
  \!\impl\! \rho_2$; 

\item $\neg(\exists \chi)\neg \rho$ is an external universal
  quantification $(\forall \chi)\rho$. 

\end{enumerate}
\end{corollary}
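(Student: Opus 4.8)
The plan is to verify each of the three claims by a direct unfolding of Definition~\ref{int-logic-dfn}, reducing everything to elementary identities in the two-element Boolean algebra $2$ in which all the satisfaction values $(M \models^w \rho)$ live. The only ingredient beyond pure propositional manipulation is the infinitary De~Morgan law $\neg \bigvee_i a_i = \bigwedge_i \neg a_i$, which is available because $2$ is a complete Boolean algebra; this is what makes item~(3) work. (Alternatively, by Fact~\ref{logic-sharp-fact} one could transport each claim to $\I^\sharp$ and invoke the corresponding classical institution-theoretic facts, but the direct computation is shorter and self-contained.)

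For item~(1), fix a $\Sigma$-model $M$ and $w \in \sem{M}_\Sigma$. Applying successively the defining equation of external negation, then of external conjunction, then of external negation again, I obtain
\[
(M \models^w \neg(\neg\rho_1 \wedge \neg\rho_2)) = \neg\big((M\models^w\neg\rho_1)\wedge(M\models^w\neg\rho_2)\big) = \neg\big(\neg(M\models^w\rho_1)\wedge\neg(M\models^w\rho_2)\big),
\]
and the finite De~Morgan law rewrites the right-hand side as $(M\models^w\rho_1)\vee(M\models^w\rho_2)$, which is precisely the condition for $\neg(\neg\rho_1\wedge\neg\rho_2)$ to be an external disjunction of $\rho_1$ and $\rho_2$. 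Item~(2) is even more immediate: here $\neg\rho_1 \vee \rho_2$ denotes the external disjunction of the sentences $\neg\rho_1$ and $\rho_2$, so $(M\models^w \neg\rho_1\vee\rho_2) = (M\models^w\neg\rho_1)\vee(M\models^w\rho_2) = \neg(M\models^w\rho_1)\vee(M\models^w\rho_2)$, and the Boolean identity $\neg a \vee b = (a \impl b)$ identifies this with $(M\models^w\rho_1)\impl(M\models^w\rho_2)$, which is exactly what is required for an external implication.

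For item~(3), again fix $M$ and $w \in \sem{M}_\Sigma$, and let $\chi \co \Sigma \ra \Sigma'$. Unfolding in order the external negation, then the external existential $\chi$-quantification of $\neg\rho$, then the external negation occurring inside, gives
\[
(M \models^w_\Sigma \neg(\exists\chi)\neg\rho) = \neg\Big(\bigvee_{\Mod(\chi)(M') = M}\bigvee_{w'\in\sem{M'}_\chi^{-1}(w)}\neg(M'\models^{w'}_{\Sigma'}\rho)\Big).
\]
Applying the infinitary De~Morgan law to both joins turns the right-hand side into $\bigwedge_{\Mod(\chi)(M') = M}\bigwedge_{w'\in\sem{M'}_\chi^{-1}(w)}(M'\models^{w'}_{\Sigma'}\rho)$, which is exactly the value prescribed by Definition~\ref{int-logic-dfn} for $(M\models^w_\Sigma(\forall\chi)\rho)$; hence $\neg(\exists\chi)\neg\rho$ is an external universal $\chi$-quantification of $\rho$. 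There is no genuine obstacle in any of this; the only point that deserves a moment's attention is that the nested join-to-meet interchange in item~(3) is the complete-Boolean-algebra form of De~Morgan, which is unproblematic in $2$.
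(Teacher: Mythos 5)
Your proof is correct: the paper states this corollary without an explicit proof (it is meant to follow either by the direct pointwise unfolding you give or, via Fact~\ref{logic-sharp-fact}, from the corresponding facts in the ordinary institution $\I^\sharp$), and your computation is exactly the verification the paper leaves implicit, including the only mildly noteworthy point that in item~(3) the De~Morgan interchange is over the (possibly class-indexed) joins in the definition of external existential quantification, which is unproblematic since the values live in the two-element Boolean algebra. Nothing is missing; your aside acknowledging the alternative route through $\I^\sharp$ matches the paper's intended reading of ``corollary.''
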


\begin{proposition}
In any  stratified institution $\I$ with $\sem{\_}$ surjective 
\begin{enumerate}

\item any external conjunctions in $\I$ is an external conjunction
  in $\I^*$ too; and 

\item for any signature morphism $\chi$, any external universal
  $\chi$-quantifications in $\I$ is an external universal
  $\chi$-quantifications in $\I^*$ too.  

\end{enumerate}
\end{proposition}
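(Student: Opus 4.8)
The plan is to unfold the definition of $\models^*$ in both claims and reduce everything to a routine reindexing of meets in the two-element Boolean algebra of truth values. I would stress at the outset that surjectivity of $\sem{\_}$ is invoked only to guarantee that $\I^*$ is a bona fide institution; it plays no role in any of the computations below.

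For (1) I would fix a $\Sigma$-model $M$ and an external conjunction $\rho_1 \!\wedge\! \rho_2$ of $\I$, and compute, using the definition of $\models^*$ and then the defining property of external conjunctions in $\I$,
\[
(M \models^*_\Sigma \rho_1 \!\wedge\! \rho_2) = \bigwedge_{w\in \sem{M}_\Sigma}(M \models^w_\Sigma \rho_1 \!\wedge\! \rho_2) = \bigwedge_{w\in\sem{M}_\Sigma}\big((M\models^w_\Sigma\rho_1)\wedge(M\models^w_\Sigma\rho_2)\big).
\]
The last expression equals $\big(\bigwedge_{w}(M\models^w_\Sigma\rho_1)\big)\wedge\big(\bigwedge_{w}(M\models^w_\Sigma\rho_2)\big) = (M\models^*_\Sigma\rho_1)\wedge(M\models^*_\Sigma\rho_2)$, since a universally quantified conjunction decomposes into the conjunction of the two universally quantified conjuncts (valid even when $\sem{M}_\Sigma$ is empty, both sides then being $\true$). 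Hence $\rho_1\!\wedge\!\rho_2$ is an external conjunction in $\I^*$.

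For (2) I would fix $\chi \co \Sigma \ra \Sigma'$ and an external universal $\chi$-quantification $(\forall\chi)\rho'$ in $\I$, and fix a $\Sigma$-model $M$. In the ordinary institution $\I^*$ — which shares its model functor, and hence its $\chi$-reducts, with $\I$ — an external universal $\chi$-quantification of $\rho'$ must satisfy $(M\models^*_\Sigma(\forall\chi)\rho') = \bigwedge\{M'\models^*_{\Sigma'}\rho' \mid \Mod(\chi)(M') = M\}$, so the goal becomes
\[
\bigwedge_{w\in\sem{M}_\Sigma}(M\models^w_\Sigma(\forall\chi)\rho') = \bigwedge_{\Mod(\chi)(M')=M}\ \bigwedge_{v\in\sem{M'}_{\Sigma'}}(M'\models^v_{\Sigma'}\rho').
\]
I would apply the defining property of external universal $\chi$-quantifications in $\I$ to the left-hand side, turning it into the triple meet $\bigwedge_{w\in\sem{M}_\Sigma}\bigwedge_{\Mod(\chi)(M')=M}\bigwedge_{w'\in\sem{M'}_\chi^{-1}(w)}(M'\models^{w'}_{\Sigma'}\rho')$, then swap the two outer meets; after that it suffices to check, for each fixed $M'$ with $\Mod(\chi)(M') = M$, that $\bigwedge_{w\in\sem{M}_\Sigma}\bigwedge_{w'\in\sem{M'}_\chi^{-1}(w)}(M'\models^{w'}_{\Sigma'}\rho') = \bigwedge_{v\in\sem{M'}_{\Sigma'}}(M'\models^v_{\Sigma'}\rho')$. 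This is where the lax-natural-transformation structure of $\sem{\_}$ enters: $\sem{M'}_\chi$ is a function $\sem{M'}_{\Sigma'} \ra \sem{\Mod(\chi)(M')}_\Sigma$, and since $\Mod(\chi)(M') = M$ its codomain is $\sem{M}_\Sigma$, so the fibres $\{\sem{M'}_\chi^{-1}(w)\}_{w\in\sem{M}_\Sigma}$ partition $\sem{M'}_{\Sigma'}$ (some possibly empty, contributing the neutral element $\true$).

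I do not expect a genuine obstacle here; the only point requiring care is the reindexing in (2) — verifying that the fibres of $\sem{M'}_\chi$ over $\sem{M}_\Sigma$ exhaust $\sem{M'}_{\Sigma'}$ and that empty fibres are harmless, which relies on the codomain typing of $\sem{M'}_\chi$ together with the identification $\sem{\Mod(\chi)(M')}_\Sigma = \sem{M}_\Sigma$. One could alternatively derive both parts by first passing from $\I$ to $\I^\sharp$ via Fact~\ref{logic-sharp-fact}, but the direct computation above seems shortest.
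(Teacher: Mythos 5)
Your proposal is correct and follows essentially the same route as the paper: part (1) is the identical direct computation, and part (2) unfolds both sides of the same equality of meets, which you settle by a fibre-of-$\sem{M'}_\chi$ reindexing where the paper verifies the two inequalities between the index sets — the same underlying observation that every $v'\in\sem{M'}_{\Sigma'}$ lies in the fibre over $\sem{M'}_\chi(v')\in\sem{M}_\Sigma$. Your side remark that surjectivity of $\sem{\_}$ is needed only for $\I^*$ to be an institution, not in the computation, is also consistent with the paper's proof.
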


\begin{proof}
1. For each $\Sigma$-model $M$ and any conjunction $\rho_1 \wedge
\rho_2$ in $\I$ we have that 
$$\begin{array}{rll}
M \models^* \rho_1 \!\wedge\! \rho_2
= & \bigwedge_{w\in \sem{M}} (M \models^w \rho_1 \!\wedge\! \rho_2)
  & (\text{by definition of }\models^*)\\
= & \bigwedge_{w\in \sem{M}} 
    \big((M \models^w \rho_1) \wedge (M \models^w \rho_2)\big)
  & (\text{since }\rho_1\!\wedge\!\rho_2\text{ is conjunction in }\I)\\
= & \big(\bigwedge_{w\in \sem{M}} (M \models^w \rho_1)\big) \wedge 
   \big(\bigwedge_{w\in \sem{M}} (M \models^w \rho_2)\big)
  & \\
= & (M \models^* \rho_1) \wedge (M \models^* \rho_2)
  & (\text{by definition of }\models^*).
\end{array}$$

2. Let $M$ be a $\Sigma$-model and $(\forall \chi)\rho$ a universally
quantified $\Sigma$-sentence in $\I$ for $\chi\co \Sigma \ra \Sigma'$
signature morphism. 
We have that 
\begin{equation}\label{univ-eq1}
\begin{array}{rl}
M \models^*_\Sigma (\forall \chi)\rho =
  & \bigwedge_{w\in \sem{M}} (M \models^w_\Sigma (\forall \chi)\rho)\\
= & \bigwedge \{ (M' \models^{w'}_{\Sigma'} \rho \mid
w\in \sem{M}, \Mod(\chi)(M')=M, w'\in \sem{M'}^{-1}_{\chi} (w) \}. 
\end{array}
\end{equation}
On the other hand we have that 
\begin{equation}\label{univ-eq2}
\bigwedge_{\Mod(\chi)(N')=M} (N' \models^*_{\Sigma'} \rho') = 
\bigwedge_{\Mod(\chi)(N')=M} \big( \bigwedge_{v'\in \sem{N'}_{\Sigma'}}
  (N' \models^{v'}_{\Sigma'} \rho) \big)
\end{equation}
In order to show that $(\forall \chi)\rho$ is an external universal
quantification in $\I^*$ we have to prove that the values in the
equations (\ref{univ-eq1}) and (\ref{univ-eq2}) are equal. 

\fbox{(\ref{univ-eq1}) $\leq$ (\ref{univ-eq2})}
For each $\Mod(\chi)(N')=M$ and $w'\in \sem{N'}_{\Sigma'}$ like in
(\ref{univ-eq2}) we consider $M' = N'$, $w' = v'$ and $w =
\sem{M'}_\chi (w')$ in (\ref{univ-eq1}).  

\fbox{(\ref{univ-eq2}) $\leq$ (\ref{univ-eq1})}
For each $w\in \sem{M}$,  $\Mod(\chi)(M')=M$ and $w'\in
\sem{M'}^{-1}_{\chi} (w)$ like in (\ref{univ-eq1}) we take $N' = M'$
and $v' = w'$ in (\ref{univ-eq2}). 
\end{proof}

In general, $\I^*$ may lack other connectives besides conjunction and
also the existential quantifications that $\I$ does have. 

\begin{definition}[Frame extraction]
Given a stratified institution $\I$, a \emph{frame extraction} is a
pair $L,\Fr$ consisting of a functor $L \co \Sign^\I \ra
\Sign^\REL$ and a lax natural transformation $\Fr \co \Mod^\I \Ra
L;\Mod^\REL$ such that $\sem{\_} = \Fr ; L(\Mod^\REL \Ra \SET)$. 
$$\xy
\xymatrix{
\Mod(\Sigma) \ar[r]^{\sem{\_}_\Sigma} \ar[dr]_{\Fr_\Sigma} & \Set \\
 & \Mod^\REL (L(\Sigma)) \ar[u]_{\text{forgetful}}
}
\endxy$$
\end{definition}

\begin{example}
\begin{rm}
The following table shows some frame extractions for the stratified
institutions introduced above. 

\

\noindent
\begin{tabular}{rll}
stratified institution & $L$ & $\Fr$ \\\hline
$\MPL, \MFOL, \HPL, \HFOL, \HHPL$ & $L(\Sigma) = \{ \lambda \co 2 \}$ &
$\Fr_\Sigma (W,M) = (|W|,W_\lambda)$ \\\hline
$\MMPL, \MMFOL, \MHPL, \MHFOL$ & $L(\Sigma,\Lambda) = \Lambda$ & 
$\Fr_\Sigma (W,M) = (|W|,(W_\lambda)_{\lambda\in
  \Lambda_{n+1},n\in\omega})$ \\\hline
$\MOFOL, \HMOFOL$ & $L((F,P),X) = P$ & $\Fr_\Sigma (M) =
(|M|^X,((M^X)_\pi)_{\pi\in P_{n+1}, n\in \omega})$.  
\end{tabular}
\end{rm}
\end{example}

\begin{definition}\label{mod-strat-institution-dfn}
Let $\I$ be a stratified institution endowed with a frame extraction
$L,\Fr$.  
For any $\lambda \in L(\Sigma)_{n+1}$ and any $\Sigma$-sentences 
$\rho_1, \dots, \rho_n$
\begin{itemize}\smallitems

\item a $\Sigma$-sentence $\langle\lambda\rangle(\rho_1,\dots\rho_n)$ is an
\emph{external $\lambda$-possibility of $\rho_1, \dots, \rho_n$} when
\[
(M \models^w \langle\lambda\rangle(\rho_1,\dots\rho_n)) = 
\bigvee_{(w,w_1,\dots,w_n)\in (\Fr_\Sigma(M))_\lambda} \big(
\bigwedge_{1\leq i \leq n} M \models^{w_i} \rho_i  \big);
\]

\item  a $\Sigma$-sentence $[\lambda](\rho_1,\dots\rho_n)$ is an 
\emph{external $\lambda$-necessity of $\rho_1, \dots, \rho_n$} when
\[
(M \models^w [\lambda](\rho_1,\dots\rho_n)) = 
\bigwedge_{(w,w_1,\dots,w_n)\in (\Fr_\Sigma (M))_\lambda} \big(
\bigvee_{1\leq i \leq n} M \models^{w_i} \rho_i  \big);
\]
\end{itemize}
for each $\Sigma$-model $M$ and for each $w\in \sem{M}_\Sigma$.    
\end{definition}

\begin{fact}
In any  stratified institution like in
Dfn.~\ref{mod-strat-institution-dfn},
$\neg \langle\lambda\rangle (\neg \rho,\dots,\neg\rho_n)$ is a
$\lambda$-necessity of $\rho_1,\dots,\rho_n$. 
\end{fact}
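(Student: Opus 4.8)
The plan is to verify the defining equation for a $\lambda$-necessity directly, by unfolding the meta-notations and applying the Boolean identities available in the two-element algebra of truth values in which satisfaction takes its values. Fix an arbitrary $\Sigma$-model $M$ and an arbitrary state $w\in\sem{M}_\Sigma$; the whole argument is pointwise in $(M,w)$ and uses nothing beyond the defining equations of Dfn.~\ref{int-logic-dfn} and Dfn.~\ref{mod-strat-institution-dfn} — in particular no appeal to the Satisfaction Condition or to any naturality is needed.

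First I would rewrite $(M\models^w \neg\langle\lambda\rangle(\neg\rho_1,\dots,\neg\rho_n))$ using the definition of external negation, obtaining $\neg\,(M\models^w\langle\lambda\rangle(\neg\rho_1,\dots,\neg\rho_n))$. Next I would expand the inner possibility by its defining equation, turning it into $\bigvee_{(w,w_1,\dots,w_n)\in(\Fr_\Sigma(M))_\lambda}\big(\bigwedge_{1\le i\le n}(M\models^{w_i}\neg\rho_i)\big)$; a further application of external negation replaces each $M\models^{w_i}\neg\rho_i$ by $\neg(M\models^{w_i}\rho_i)$. The final step is purely Boolean: push the outermost $\neg$ through the disjunction over $(\Fr_\Sigma(M))_\lambda$ (turning it into a conjunction), then through each inner conjunction over $1\le i\le n$ (turning it into a disjunction), and cancel the resulting double negations $\neg\neg(M\models^{w_i}\rho_i)$. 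What remains is $\bigwedge_{(w,w_1,\dots,w_n)\in(\Fr_\Sigma(M))_\lambda}\big(\bigvee_{1\le i\le n} M\models^{w_i}\rho_i\big)$, which is exactly the value that Dfn.~\ref{mod-strat-institution-dfn} prescribes for a $\lambda$-necessity of $\rho_1,\dots,\rho_n$; since $M$ and $w$ were arbitrary, this proves the claim.

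There is essentially no obstacle here. The only points that deserve a word are: that the De Morgan and double-negation steps are legitimate because satisfaction values live in the classical two-element Boolean algebra, so the relevant arbitrary joins and meets exist and interact with complementation as expected; and that all the equalities involved are equalities of semantic (equivalence-class) values rather than of syntactic sentences, which is precisely how the meta-notations of Dfn.~\ref{int-logic-dfn} and Dfn.~\ref{mod-strat-institution-dfn} are set up. The statement also tacitly presupposes that the external negations in question exist; granted that, the computation above goes through verbatim.
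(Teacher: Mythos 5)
Your proposal is correct and matches the intended argument: the paper states this as a Fact with no written proof precisely because it follows by the pointwise unfolding of external negation and $\lambda$-possibility plus De Morgan and double-negation in the two-element algebra of truth values, which is exactly your computation. No further comment is needed.
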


\begin{definition}[Nominals extraction]
Given a stratified institution $\I$, a \emph{nominals extraction} is a
pair $N,\Nm$ consisting of a functor $N \co \Sign^\I \ra
\Sign^\SETC$ and a lax natural transformation $\Nm \co \Mod^\I \Ra
N;\Mod^\SETC$ such that $\sem{\_} = \Nm ; N(\Mod^\SETC \Ra \SET)$. 
$$\xy
\xymatrix{
\Mod(\Sigma) \ar[r]^{\sem{\_}_\Sigma} \ar[dr]_{\Nm_\Sigma} & \Set \\
 & \Mod^\SETC (N(\Sigma)) \ar[u]_{\text{forgetful}}
}
\endxy$$
\end{definition}

\begin{example}
\begin{rm}
The following table shows some nominals extractions for the stratified
institutions introduced above.  
Note that $\HHPL$ admits two such nominals extractions. 

\

\noindent
\begin{tabular}{rll}
stratified institution & $N$ & $\Nm$ \\\hline
$\HPL, \HFOL, \MHPL, \MHFOL$ & $N(\Nom,\Sigma) = \Nom$ &
$\Nm_{(\Nom,\Sigma)} (W,M) = (|W|,(W_i)_{i\in \Nom})$ \\\hline
$\HHPL$ & $N(\Nom^0,\Nom^1,P) = \Nom^0$ & $\Nm (W^1,M^1) =
(|(W^0)^w|,((W^0)^w_i)_{i\in \Nom^0})$ \\ 
        & $N(\Nom^0,\Nom^1,P) = \Nom^1$ & $\Nm (W^1,M^1) =
        (|W^1|,(W^1_i)_{i\in\Nom^1})$ \\\hline 
$\HOFOL, \HMOFOL$ & $N((F,P),X) = F_0$ & 
         $\Nm (M) = (|M|^X, ((M^X)_i)_{i\in F_0})$ \\
\end{tabular}
\end{rm}
\end{example}

\begin{definition}\label{hybrid-strat-institution-dfn}
Let $\I$ be a stratified institution endowed with a nominals
extraction $N, \Nm$. 
For any $i\in \NNom(\Sigma)$
\begin{itemize}\smallitems

\item a $\Sigma$-sentence $\sen{i}$ is an \emph{$i$-sentence} when 
\[
(M \models^w \sen{i}) = ((\Nm_\Sigma (M))_i = w);
\]

\item for any $\Sigma$-sentence $\rho$, a $\Sigma$-sentence $@_i
  \rho$ is the \emph{satisfaction of $\rho$ at $i$} when 
\[
(M \models^w @_i \rho) = (M \models^{(\Nm_\Sigma (M))_i} \rho);
\]
\end{itemize}
for each $\Sigma$-model $M$ and for each $w\in \sem{M}_\Sigma$. 
\end{definition}

\begin{example}
\begin{rm}
The following table shows what of the properties of
Dfn.~\ref{int-logic-dfn}, \ref{mod-strat-institution-dfn} and
\ref{hybrid-strat-institution-dfn} are satisfied by the examples of
stratified institutions given above in the paper. 

\

\noindent
\begin{tabular}{rcccccccccc}
 & $\wedge$ & $\vee$ & $\neg$ & $\impl$ & $(\forall \chi)$ 
   & $(\exists \chi)$ & $\langle\lambda\rangle$ & $[\lambda]$ 
   & $\sen{i}$ & $@_i$ \\\hline 
$\MPL$ & \checkmark & \checkmark & \checkmark & \checkmark & & &
     $\Diamond$ & $\Box$ & & \\\hline
$\MFOL$ & \checkmark & \checkmark & \checkmark & \checkmark &
   $(\forall x)$ & $(\exists x)$ & $\Diamond$ & $\Box$ & & \\\hline
$\HPL$  & \checkmark & \checkmark & \checkmark & \checkmark &
   $(\forall i)$ & $(\exists i)$ & $\Diamond$ & $\Box$ & \checkmark &
   \checkmark \\\hline 
$\HFOL$  & \checkmark & \checkmark & \checkmark & \checkmark &
   $(\forall x)$, $(\forall i)$ & $(\exists x)$, $(\exists i)$ &
   $\Diamond$ & $\Box$ & \checkmark & \checkmark \\\hline 
$\MMPL$ & \checkmark & \checkmark & \checkmark & \checkmark & & &
    \checkmark & \checkmark & & \\\hline 
$\MHPL$ & \checkmark & \checkmark & \checkmark & \checkmark &
    $(\forall i)$ & $(\exists i)$ &
    \checkmark & \checkmark & \checkmark & \checkmark \\\hline 
$\MMFOL$ & \checkmark & \checkmark & \checkmark & \checkmark &
    $(\forall x)$ & $(\exists x)$ & \checkmark & \checkmark & &
    \\\hline 
$\MHFOL$ & \checkmark & \checkmark & \checkmark & \checkmark &
    $(\forall x)$, $(\forall i)$  & $(\exists x)$, $(\exists i)$ &
    \checkmark & \checkmark & \checkmark & \checkmark \\\hline 
$\HHPL$  & \checkmark & \checkmark & \checkmark & \checkmark &
   $(\forall i^0)$, $(\forall i^1)$
   & $(\exists i^0)$, $(\exists i^1)$
   & $\Diamond$ & $\Box$ & $\sen{i^0}$, $\sen{i^1}$ &
   $@_{i^0}$, $@_{i^1}$ \\\hline 
$\OFOL$ & \checkmark & \checkmark & \checkmark & \checkmark &
    $(\forall x)$ & $(\exists x)$ & & & & \\\hline
$\MOFOL$ & \checkmark & \checkmark & \checkmark & \checkmark &
    $(\forall x)$ & $(\exists x)$ & \checkmark & \checkmark & &
    \\\hline 
$\HOFOL$ & \checkmark & \checkmark & \checkmark & \checkmark &
    $(\forall x)$, $(\forall i)$ & $(\exists x)$, $(\exists i)$ & & &
    \checkmark & \checkmark \\\hline 
$\HMOFOL$ & \checkmark & \checkmark & \checkmark & \checkmark &
    $(\forall x)$, $(\forall i)$ & $(\exists x)$, $(\exists i)$ &
    \checkmark & \checkmark & \checkmark & \checkmark  
\end{tabular}

\

\

\noindent
In the table $(\forall x)$, $(\forall i)$ stand for $(\forall \chi)$
where $\chi$ is an extension of the signature with a first order
variable, or a nominal variable, respectively, and similarly for the
existential quantifiers.
The case of the quantifiers reminds us once more that in spite of
the abstract simplicity of the institution theoretic approach to
quantifiers, just based upon model reducts, they are a very powerful
concept supporting a wide range of quantifications within a single
uniform definition. 
Basically, one may quantify over any syntactic entity that is
supported by the respective concept of signature morphisms.
In our examples this means first order variables and nominals alike.
An particularly interesting situation is given by $\HHPL$, where the
concept of signature supports quantification over two kinds of
nominals, corresponding to the two layers of hybridization.  
\end{rm}
\end{example}

\section{Model ultraproducts in stratified institutions}

The structure of the section is as follows:
\begin{enumerate}

\item We start with a recollection of the concept of filtered product
  in abstract categories.

\item Then we discuss filtered products of models in stratified
  institutions and develop some technical results about the
  representation of filtered products of models in $\I^\sharp$, the
  local institution associated to a stratified institution $\I$. 

\item The last part of this section is concerned with the development
  of a {\L}o\'{s} styled theorem for abstract stratified institutions
  that carry some implicit modal structure. 
  This means a gathering of relevant preservation properties for the
  connectives commonly used in sentences in various modal logic
  systems; the connectives are considered by their semantic
  definitions given in Sect.~\ref{logic-sect}. 
  Here also the compactness consequence of {\L}o\'{s} theorem is
  studied both at the level of abstract structured institutions and at
  the level of concrete examples. 

\end{enumerate}

\subsection{A reminder of categorical filtered products}

For each non-empty set $I$ we denote the set of all subsets of $I$ by
$\mathcal{P}(I)$.
A \emph{filter $F$ over $I$}\index{filter} is defined to be a set $F
\subseteq \mathcal{P}(I)$ such that
\begin{itemize} \smallitems

\item $I \in F$,

\item $X \cap Y \in F$ if $X \in F$ and $Y \in F$, and

\item $Y \in F$ if $X \subseteq Y$ and $X \in F$.

\end{itemize}
A filter $F$ is \emph{proper} when $F$ is not $\mathcal{P}(I)$ and it
is an \emph{ultrafilter}\index{ultrafilter} when $X \in F$ if and only
if $(I \setminus X) \not\in F$ for each $X\in \mathcal{P}(I)$.
Notice that ultrafilters are proper filters.
We will always assume that all our filters are proper.

Let $F$ be a filter over $I$ and $I' \subseteq I$.
The \emph{reduction of $F$ to $I'$}\index{reduction of filter}
is denoted by $F|_{I'}$ and defined as $\{ I' \cap X \mid X \in F \}$.

\begin{fact}
The reduction of any filter is still a filter.
\end{fact}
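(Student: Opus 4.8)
The statement to prove is: \emph{The reduction of any filter is still a filter.} Here, given a filter $F$ over $I$ and a subset $I' \subseteq I$, its reduction to $I'$ is $F|_{I'} = \{ I' \cap X \mid X \in F \}$.

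\medskip

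\noindent\textbf{Proof plan.} The plan is to verify directly the three defining axioms of a filter from the preliminaries section, working now with $\mathcal{P}(I')$ rather than $\mathcal{P}(I)$ as the ambient Boolean algebra. First I would check that $I' \in F|_{I'}$: this is immediate since $I \in F$ and $I' \cap I = I'$. Next, for closure under finite intersections, suppose $A, B \in F|_{I'}$, so $A = I' \cap X$ and $B = I' \cap Y$ for some $X, Y \in F$; then $A \cap B = I' \cap (X \cap Y)$, and since $X \cap Y \in F$ because $F$ is a filter, we conclude $A \cap B \in F|_{I'}$. Finally, for upward closure within $\mathcal{P}(I')$, suppose $A \in F|_{I'}$ and $A \subseteq B \subseteq I'$; write $A = I' \cap X$ with $X \in F$. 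The natural candidate witnessing $B \in F|_{I'}$ is $X \cup B$: one checks $X \cup B \in F$ by upward closure of $F$ (since $X \subseteq X \cup B$ and $X \in F$), and $I' \cap (X \cup B) = (I' \cap X) \cup (I' \cap B) = A \cup B = B$, the last equality holding because $A \subseteq B \subseteq I'$. Hence $B \in F|_{I'}$.

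\medskip

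\noindent I would also note, for completeness, that if $F$ is proper then $F|_{I'}$ need not be proper in general (it fails exactly when $I' \cap X = \emptyset$ for some $X \in F$, equivalently when $I \setminus I' \in F$), so the statement is really only about $F|_{I'}$ being a filter, not a proper one; this is consistent with the subsequent use, where reductions arise from index sets in $F$ itself and properness is then automatic. There is no real obstacle here — the only mild subtlety is choosing the right witness ($X \cup B$, not merely $B$) in the upward-closure step, since a priori $B$ itself is a subset of $I'$ but may not be of the form $I' \cap Z$ with $Z \in F$ unless one enlarges it appropriately.
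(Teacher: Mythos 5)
Your verification is correct: the paper states this Fact without proof, and your direct check of the three filter axioms for $F|_{I'}$ over $I'$ (with the witness $X \cup B$ for upward closure) is exactly the routine argument the paper leaves implicit. Your side remark on properness is also accurate and consistent with how the paper later uses reductions, namely only with $J \in F$, where properness is automatic.
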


\begin{definition}
A class $\F$ of filters is \emph{closed under reductions}
if and only if $F|_J \in \F$ for each $F\in \F$ and $J\in F$.
\end{definition}
Examples of classes of filters closed under reductions include the
class of all filters, the class of all ultrafilters, the class 
$\{ \{ I \} \mid I \mbox{ \ set}\}$, etc.

\begin{definition}[Categorical filtered products]
\label{cat-filt-prod-dfn}
Let $F$ be a filter over $I$ and $(M_i)_{i\in I}$ a family of objects
in a category with small direct products. 
Then an \emph{$F$-filtered product of $(M_i)_{i\in I}$} 
(or \emph{$F$-product}, for short) 
is a co-limit $\{ \mu_J \co M_J \ra M_F \mid J \in F \}$ of the
directed diagram of canonical projections  
$\{ p_{J \supseteq J'} \co M_J \ra M_{J'} \mid J' \subseteq J \in F
\}$, where for each $J\in F$, $\{ p_{J,i} \co M_J \ra M_i \mid i\in J
\}$ is a direct product of $(M_j)_{j\in J}$.   
$$\xy
\xymatrix @C+2em {
 & M_J \ar[dl]_{p_{J,i}} \ar[d]^{p_{J \supseteq J'}} \ar@/^/[dr]^{\mu_J} &
 \\
M_i & M_{J'}\ar[l]^{p_{J',i}} \ar[r]_{\mu_{J'}} & M_F
}
\endxy$$
If $F$ is an ultrafilter then $F$-products are called
\emph{ultraproducts}\index{ultraproduct of models, in institution}.
\end{definition}
Note that a direct product $\prod_{i\in I} A_i$ is the same as
an $\{ I \}$-product of $(A_i)_{i\in I}$.
Obviously, as co-limits of diagrams of products, filtered products are
unique up to isomorphisms.
Since the co-limits defining filtered products are directed, a
sufficient condition for the existence of filtered 
products, which applies to many situations, is the existence of
small products and of directed co-limits of models.
Note however that this is not a necessary condition because only
co-limits over diagrams of projections are involved.
For example models of higher order logic \cite{codescu-msc,iimt} in
general are known to have only direct products and ultraproducts. 

\begin{definition}[Preservation/lifting of filtered products
  \cite{upins,iimt}] 
Consider a functor $G \co \C' \ra \C$  and $F$ a filter over a set
$I$.
\begin{itemize}\smallitems

\item $G$ \emph{preserves $F$-products} when for each $F$-product
  $\mu'$ of a family $(M'_i)_{i\in I}$ in $|\C'|$, $G(\mu')$ is an
  $F$-product (in $\C$) of   $(G(M'_i))_{i\in I}$. 

\item $G$ \emph{lifts $F$-products} when for each family
  $(M'_i)_{i\in I}$ in $|\C'|$ and each $F$-product $\mu$ in $\C$ of
  $(G(M'_i))_{i\in I}$, there exists an $F$-product $\mu'$ of 
  $(M'_i)_{i\in I}$ in $\C'$ such that $G(\mu') = \mu$. 

\end{itemize}
For any class $\F$ of filters, we say that a functor
\emph{preserves/lifts $\F$-products}
if it preserves/lifts all $F$-products for each filter
$F \in \F$.
\end{definition}

\begin{fact}
If $G$ lifts $F$-products then it also preserves them. 
\end{fact}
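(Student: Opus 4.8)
The plan is to exploit the uniqueness up to isomorphism of filtered products together with the lifting hypothesis. Fix a family $(M'_i)_{i\in I}$ in $|\C'|$ and an $F$-product $\mu'$ of it in $\C'$; the goal is to show that $G(\mu')$ is an $F$-product of $(G(M'_i))_{i\in I}$ in $\C$. First I would invoke the standing assumption under which this section operates, namely that the relevant filtered products exist, to pick an $F$-product $\nu$ of $(G(M'_i))_{i\in I}$ in $\C$. By the lifting hypothesis there is then an $F$-product $\mu''$ of $(M'_i)_{i\in I}$ in $\C'$ with $G(\mu'') = \nu$.

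Next I would compare $\mu'$ and $\mu''$. Both are $F$-products of the same family, so by the earlier remark that filtered products are unique up to isomorphism — being colimits of diagrams of direct products, each of which is itself unique up to isomorphism — there is a coherent isomorphism between the two: canonical isomorphisms $\theta_J \co M'_J \ra M''_J$ between the chosen products over each $J\in F$, commuting with the projections, which assemble into an isomorphism of the two directed diagrams of canonical projections and hence induce an isomorphism $\theta_F \co M'_F \ra M''_F$ of the colimit objects with $\mu'_J ; \theta_F = \theta_J ; \mu''_J$ for every $J\in F$.

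Then I would apply $G$ to this data. Each $G(\theta_J)$ is an isomorphism $G(M'_J) \ra G(M''_J)$ commuting with the image projections, so $(G(M'_J),(G(p'_{J,i}))_{i\in J})$ is a direct product of $(G(M'_j))_{j\in J}$, being isomorphic to the corresponding product component of $\nu$. And $G(\theta_F)$ exhibits the cocone $G(\mu')$, taken over the diagram of these image projections, as isomorphic to the cocone $G(\mu'') = \nu$; since $\nu$ is a colimit cocone and a cocone isomorphic to a colimit cocone is again a colimit cocone, $G(\mu')$ is a colimit of that diagram. Putting the two observations together, $G(\mu')$ is an $F$-product of $(G(M'_i))_{i\in I}$, as required.

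I expect the only real subtlety to be the bookkeeping in the middle step: spelling out precisely what ``isomorphism of $F$-products'' means through the two-stage structure (a family of direct products, then a directed colimit over their projections), so that applying $G$ transports both stages cleanly. Everything else is formal — uniqueness of colimits and stability of (co)limit cocones under isomorphism — and the existence of an $F$-product downstairs, which the argument genuinely needs, is covered by the standing assumptions of this section.
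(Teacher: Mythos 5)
Your argument is correct and is the natural one; note that the paper states this as a bare Fact with no proof at all, so there is no ``paper's proof'' to diverge from, and your route (take an $F$-product of the image family in $\C$, lift it along $G$, compare with the given $F$-product in $\C'$ via uniqueness up to isomorphism, and transport both the product stage and the colimit stage of the structure through $G$) is the standard way to make the claim precise. One caveat deserves emphasis: the ``standing assumption'' you invoke --- that an $F$-product of $(G(M'_i))_{i\in I}$ exists in $\C$ --- is not actually stated anywhere in this subsection of the paper; the definition of preservation/lifting only fixes a functor $G \co \C' \ra \C$ and a filter $F$. You are right, however, that some such hypothesis is genuinely needed: the lifting property is conditional on an $F$-product existing downstairs, so if the image family had no $F$-product in $\C$ the hypothesis would be vacuous for that family while preservation would fail outright, and the Fact as literally stated would be false. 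So your proof in effect makes explicit an implicit hypothesis of the paper; it is harmless in context, since everywhere the Fact is used the target category ($\Set$, or a model category with concrete $F$-products) has all the relevant $F$-products. The remaining bookkeeping you flag --- reading $G(\mu'')=\nu$ as an equality of the full two-stage structures (chosen products with their projections, then the colimit cocone), so that the canonical isomorphisms $\theta_J$ and $\theta_F$ transport a product structure and a colimit cocone along isomorphisms after applying $G$ --- is exactly the right point to be careful about, and your treatment of it is sound.
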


In many situations the following applies. 

\begin{fact}
A functor $G$ preserves/lifts $F$-products if it preserves/lifts 
direct products and directed co-limits. 
\end{fact}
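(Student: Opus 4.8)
The plan is to unfold Definition~\ref{cat-filt-prod-dfn}: an $F$-product of $(M_i)_{i\in I}$ is a colimit of the directed diagram $D \co (F,\supseteq) \ra \C'$ that sends each $J\in F$ to a direct product $M_J$ of $(M_j)_{j\in J}$ (with projections $p_{J,i}$) and each inclusion $J'\subseteq J$ to the canonical projection $p_{J\supseteq J'}$. Note that $(F,\supseteq)$ is directed because $F$ is non-empty and closed under finite intersections, so this colimit is indeed a directed one. The whole statement then rests on two observations: (i) applying $G$ (resp. lifting along $G$) turns each $M_J$ into a direct product of $(G(M_j))_{j\in J}$; and (ii) it carries each canonical projection between such products to the canonical projection between the images. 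Point (ii) follows from the uniqueness clause of the universal property of products: $p_{J\supseteq J'}$ is the unique arrow with $p_{J\supseteq J'};p_{J',i} = p_{J,i}$ for all $i\in J'$, and applying $G$ to this equation, together with the universal property of $G(M_{J'})$, pins $G(p_{J\supseteq J'})$ down as the canonical projection.

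For preservation, suppose $\mu = \{\mu_J \co M_J \ra M_F\}_{J\in F}$ is an $F$-product of $(M_i)_{i\in I}$, hence a colimit of $D$. Since $G$ preserves direct products, each $G(M_J)$ with the $G(p_{J,i})$ is a direct product of $(G(M_i))_{i\in J}$; by (ii), $D;G$ is then (a choice of) the diagram of canonical projections defining the $F$-product of $(G(M_i))_{i\in I}$. Since $G$ preserves directed colimits, $G(\mu)$ is a colimit of $D;G$, i.e. an $F$-product of $(G(M_i))_{i\in I}$.

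For lifting, let $\nu = \{\nu_J \co N_J \ra N_F\}_{J\in F}$ be an $F$-product in $\C$ of $(G(M_i))_{i\in I}$, built from direct products $N_J$ of $(G(M_j))_{j\in J}$ with projections $q_{J,i}$ and connecting canonical projections forming a diagram $D_\nu$. Because $G$ lifts direct products, for each $J$ there is a direct product $M_J$ of $(M_j)_{j\in J}$ in $\C'$ whose image under $G$ is $N_J$ (with projections mapping to the $q_{J,i}$). By (ii) again, the canonical projections $p_{J\supseteq J'}$ in $\C'$ are forced to map onto the canonical projections $N_J\ra N_{J'}$, so they assemble into a diagram $D'$ over $\C'$ with $D';G = D_\nu$. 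Since $G$ lifts directed colimits and $\nu$ is a colimit of $D_\nu$, there is a colimit $\mu'$ of $D'$ with $G(\mu') = \nu$; by construction $\mu'$ is an $F$-product of $(M_i)_{i\in I}$.

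The main obstacle is the bookkeeping around point (ii): one must be careful that $G$ respects the \emph{morphisms} (the canonical projections), not merely the product objects, and — in the lifting direction — that the lifted objects $M_J$ automatically organise into a directed diagram over $\C'$ lying above $D_\nu$, with no further coherence to check precisely because the connecting projections are uniquely determined. Both facts are immediate consequences of the uniqueness part of the universal property of products, but they are exactly where the argument carries content; everything else is a routine invocation of the hypotheses on $G$.
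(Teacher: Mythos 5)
Your proof is correct: the paper states this Fact without proof, and your argument --- unfolding the definition of an $F$-product as a directed colimit (over the directed poset $(F,\supseteq)$) of the diagram of canonical projections between direct products, and then transporting, resp.\ lifting, that diagram along $G$ --- is exactly the routine argument the paper leaves implicit. In particular you correctly identify and discharge the one point with content, namely that the uniqueness clause of the product's universal property forces $G$ to send (and the lifted products to receive) the canonical projections, so the diagrams upstairs and downstairs match on the nose and the preservation/lifting of directed colimits applies directly.
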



The concept has been introduced first time in \cite{upins} under a
different terminology and in a slightly different form, and has been
subsequently used in several works most notably in \cite{ks,iimt}.  

\begin{definition}[Inventing of filtered products]\label{lift-fprod-dfn}
Let  $\F$ be a class of filters closed under reductions.
A functor $G \co \C' \ra \C$ \emph{invents $\F$-products}
\index{lifting of  filtered products by functor} when for each $F\in \F$,
for each $F$-product $\{ \mu_J \co M_J \ra M_F \mid J \in F\}$ of a
family $(M_i)_{i\in I}$  in $|\C|$, and for each $B\in|\C'|$ such that  
$G(B) = M_F$, 
\begin{itemize} \smallitems

\item[--] there exists $J\in F$ and $( M'_i )_{i\in J}$ a family in
  $|\C'|$ such that $G(M'_i) = M_i$ for each $i\in J$ and such that 

\item [--] there exists an $F|_J$-product 
  $\{ \mu'_{J'} \co M'_{J'} \ra B \mid J' \in F|_J \}$ of 
  $(M'_i)_{i\in J}$ such that $G(\mu'_{J'}) = \mu_{J'}$ for each $J'
  \in F|_J$.

\end{itemize}
When $J = I$ we say that $G$
\emph{lifts completely}\index{complete lifting of filtered products by
functor}
the respective $F$-product.
(Note that in this case the closure of $\F$ under reductions is
redundant.)
\end{definition}

In essence, the inventing property of Dfn.~\ref{lift-fprod-dfn} means
that each $\F$-product construction of $G(B)$ can be established as
the image by $G$  of an $\F$-product construction of $B$ by means of a
filter reduction. 

\subsection{Filtered products in stratified institutions}

\begin{definition}
Let $\F$ be any class of filters.
A  stratified institution \emph{has (concrete) $\F$-products} when for
each signature $\Sigma$, $\Mod(\Sigma)$ has $\F$-products (and 
$\sem{\_}_\Sigma \co \Mod(\Sigma) \ra \Set$ preserves $\F$-products).  
\end{definition}
As the following examples show, in practice it is common that
the $\F$-products are concrete.

\begin{example}\label{f-prod-ex}
\begin{rm}
In all examples of Sect.~\ref{ex-sect} the respective stratified
institutions have all $F$-products, which are concrete, as
follows.  
\begin{enumerate}

\item The $F$-products in $\MPL$, $\MFOL$, $\HPL$, $\HFOL$, $\HHPL$
  are obtained as direct instances of the general result on existence
  of $F$-products developed in \cite{ks}.
  In the case of $\HHPL$ this has to be applied twice, first for
  getting $F$-products in $\HPL$ from the $F$-products in $\PL$, and
  then for getting the $F$-products in $\HHPL$ from the $F$-products
  in $\HPL$. 

\item In the case of $\MMPL$, $\MHPL$, $\MMFOL$, $\MHFOL$ we may apply
  a straightforward extension of the above mentioned result of
  \cite{ks} to the multi-modal situation. 

\item In the case of $\OFOL$, $\MOFOL$, $\HOFOL$, $\HMOFOL$ the
  $F$-products are much simpler than in the previous cases
  because the models in all these institutions are just $\FOL$
  models.   

\end{enumerate}
In the case of $\MPL$, $\MFOL$, $\HPL$, $\HFOL$, $\MMPL$, $\MHPL$,
$\MMFOL$, $\MHFOL$, $\HHPL$, according to \cite{ks} the construction
of filtered products is done in two steps, first at the level of 
the Kripke frames and next lifted to the level of the Kripke models in
$\Mod(\Sigma)$; this shows that $\sem{\_}_\Sigma$ creates filtered
products.  
For example, in $\MFOL$ an $F$-product of a family 
$(W_i,M_i)_{i\in I}$ is 
$\{ \mu_J \co (W_J,M_J) \ra (W_F,M_F)\mid J\in F \}$ where 
\begin{itemize}\smallitems

\item $\{ (\mu_J)_0 \co W_J \ra W_F \mid J \in F \}$ is an $F$-product
  of the family of $\BREL$ models $(W_i)_{i\in I}$ where $W_J$ is
  the cartezian product of $(W_i)_{i\in J}$; and 

\item for each $(w_i)_{i\in I} \in |W_I|$ and each $J\in F$ we let 
$M_J^{(w_j)_{j\in J}}$ denote the cartezian product of
$(M_j^{w_j})_{j\in J}$; note that both $|M_J^{(w_j)_{j\in J}}|$ and 
$(M_J^{(w_j)_{j\in J}})_x$ for $x$ constant are invariant with respect
to $(w_i)_{i\in I}$; 

\item let $\{ (\mu_J)_1 \co |M_J^{(w_j)_{j\in J}}| \ra |M_F| \mid J\in
  F \}$ be a directed co-limit in $\Set$; 

\item since the underlying carrier functor 
$|\_| \co \Mod^{\FOL}(\Sigma) \ra \Set$ creates directed co-limits,
for each $(w_i)_{i\in I} \in |W_I|$ we lift the directed co-limit of
the previous item to a directed co-limit  
$\{ (\mu_J)_1 \co M_J^{(w_j)_{j\in J}} \ra 
M_F^{(\mu_I)_0 ((w_i)_{i\in I})} \mid J\in F \}$ 
of $\Mod^{\FOL}(\Sigma)$-models; it is not difficult to check that
the definition of $M_F$ is correct in the sense that 
$(\mu_I)_0 ((w_i)_{i\in I}) = (\mu_I)_0 ((v_i)_{i\in I})$ implies that
$M_F^{(\mu_I)_0 ((w_i)_{i\in I})} = M_F^{(\mu_I)_0 ((v_i)_{i\in I})}$. 

\end{itemize}
In the case of $\OFOL$, $\MOFOL$, $\HOFOL$, $\HMOFOL$,
$\sem{\_}_\Sigma$ is just the composition between a $\FOL$ underlying
carrier functor $M \mapsto |M|$, and a power functor $|M| \mapsto
|M|^X$, which are known (e.g. \cite{iimt}, etc.) to create direct
products and directed co-limits, and thus filtered products.   
\end{rm}
\end{example} 

The following result gives a representation of $F$-products in the
local institution $\I^\sharp$ from the $F$-products in the stratified
institution $\I$. 

\begin{proposition}\label{filt-prod-sharp-prop}
If a stratified institution $\I$ has concrete
$F$-products, then $\I^\sharp$ has $F$-products, which for any  
family 
$\{ (M_i,w_i) \mid M_i \in |\Mod(\Sigma)|, w_i \in \sem{M_i}_\Sigma,
i\in I \}$ may be defined by
\begin{equation}\label{filt-prod-sharp-eqq}
\{ (\mu_J,w_J) \co (M_J,w_J) \ra (M_F, \sem{\mu_I}(w_I)) \mid J \in F \},
\end{equation}
where $\{ \mu_J \co M_J \ra M_F \mid J\in F \}$ is an $F$-product
in $\Mod(\Sigma)$ and $w_J$ is the unique element of $\sem{M_J}$ such
that for each $i\in J$, $\sem{p_{J,i}}(w_J) = w_i$. 
\end{proposition}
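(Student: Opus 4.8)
The plan is to verify directly that the family (\ref{filt-prod-sharp-eqq}) satisfies the definition of an $F$-product in $\I^\sharp$ (Dfn.~\ref{cat-filt-prod-dfn}), using the hypothesis that $\sem{\_}_\Sigma \co \Mod(\Sigma) \ra \Set$ preserves $F$-products. First I would unwind the structure of $\Mod^\sharp(\Sigma)$ from Fact~\ref{sharp-institution-fact}: direct products of $(M_i, w_i)_{i\in J}$ must be of the form $(M_J, w_J)$ where $M_J = \prod_{i\in J} M_i$ and $w_J$ is determined by $\sem{p_{J,i}}(w_J) = w_i$ for all $i\in J$. The key preliminary observation is that since $\sem{\_}_\Sigma$ preserves $F$-products it in particular preserves the small direct products appearing in the diagram, so $\{\sem{p_{J,i}} \co \sem{M_J} \ra \sem{M_i} \mid i\in J\}$ is a product cone in $\Set$; hence there is a \emph{unique} $w_J \in \sem{M_J}$ with $\sem{p_{J,i}}(w_J) = w_i$, which makes $(M_J, w_J)$ a well-defined object of $\Mod^\sharp(\Sigma)$ and shows the projections $(p_{J,i}, w_J) \co (M_J, w_J) \ra (M_i, w_i)$ are legitimate $\I^\sharp$-morphisms (the homomorphism condition $\sem{p_{J,i}}(w_J) = w_i$ is exactly what we just arranged). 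I would then check that $(M_J, w_J)$ with these projections is a direct product of $(M_i,w_i)_{i\in J}$ in $\Mod^\sharp(\Sigma)$: given a cone $(h_i, v) \co (N,v) \ra (M_i, w_i)$, the universal property of $M_J$ in $\Mod(\Sigma)$ gives a unique $h \co N \ra M_J$ with $p_{J,i};h = h_i$; to see $(h,v)$ is an $\I^\sharp$-morphism we need $\sem{h}(v) = w_J$, and this follows by the \emph{uniqueness} clause for $w_J$ in $\Set$ since $\sem{p_{J,i}}(\sem{h}(v)) = \sem{h_i}(v) = w_i$.

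Next I would handle the directed diagram of projections. The projections $p_{J\supseteq J'} \co M_J \ra M_{J'}$ in $\Mod(\Sigma)$ satisfy $p_{J\supseteq J'}; p_{J',i} = p_{J,i}$ for $i\in J'$; applying $\sem{\_}$ and using the defining equations of $w_J, w_{J'}$ together with uniqueness in the product cone for $M_{J'}$ gives $\sem{p_{J\supseteq J'}}(w_J) = w_{J'}$, so $(p_{J\supseteq J'}, w_J) \co (M_J,w_J) \ra (M_{J'},w_{J'})$ is a morphism in $\I^\sharp$ and these assemble into a directed diagram over $F$. It remains to show that $\{(\mu_J, w_J) \co (M_J,w_J) \ra (M_F, \sem{\mu_I}(w_I)) \mid J\in F\}$ is a colimit of this diagram. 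First I must confirm each $(\mu_J, w_J)$ is a well-defined morphism, i.e. $\sem{\mu_J}(w_J) = \sem{\mu_I}(w_I)$: this uses the colimit compatibility $p_{I\supseteq J};\mu_J = \mu_I$, whence $\sem{\mu_J}(w_J) = \sem{\mu_J}(\sem{p_{I\supseteq J}}(w_I)) = \sem{\mu_I}(w_I)$ (using the previous paragraph's identity $\sem{p_{I\supseteq J}}(w_I) = w_J$), and also that the triangles $(p_{J\supseteq J'}, w_J);(\mu_{J'}, w_{J'}) = (\mu_J, w_J)$ commute, which is immediate from the corresponding identity in $\Mod(\Sigma)$ on the first component and trivial on the second.

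For the universal property of the colimit, suppose $\{(g_J, w_J) \co (M_J, w_J) \ra (N,v) \mid J\in F\}$ is a compatible cocone in $\Mod^\sharp(\Sigma)$. Since $\{\mu_J \co M_J \ra M_F\}$ is a colimit in $\Mod(\Sigma)$, there is a unique $g \co M_F \ra N$ with $\mu_J; g = g_J$ for all $J$. To see $(g, \sem{\mu_I}(w_I))$ is an $\I^\sharp$-morphism $(M_F, \sem{\mu_I}(w_I)) \ra (N,v)$ I need $\sem{g}(\sem{\mu_I}(w_I)) = v$; since $\sem{\_}$ preserves the $F$-product, $\{\sem{\mu_J} \co \sem{M_J} \ra \sem{M_F}\}$ is a colimit cocone in $\Set$ over the directed diagram of the $\sem{p_{J\supseteq J'}}$, and $\sem{g}(\sem{\mu_I}(w_I)) = \sem{g}(\sem{\mu_J}(w_J)) = \sem{g_J}(w_J) = v$ for any $J$ (each $(g_J,w_J)$ being a morphism), so the equation holds. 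Uniqueness of the mediating morphism $(g, \sem{\mu_I}(w_I))$ follows from uniqueness of $g$ in $\Mod(\Sigma)$ together with the fact that the second component is forced. The main obstacle in the argument is conceptual rather than technical: one must keep straight that $\sem{\_}_\Sigma$ being assumed to \emph{preserve} $F$-products is precisely the leverage that makes both the product cones and the directed colimit cocone behave in $\Set$ as needed, so that every "choice" of a state on a filtered component is in fact uniquely determined — this is what lets the pairing $(\cdot, \cdot)$ of morphisms go through without extra hypotheses, and one should flag at the start that $w_J$ is well-defined before using it.
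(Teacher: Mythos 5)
Your proof is correct and follows essentially the same route as the paper's: establish that $(M_J,w_J)$ with the projections $(p_{J,i},w_J)$ is a direct product in $\Mod^\sharp(\Sigma)$ using the fact that the $\sem{p_{J,i}}$ form a product cone in $\Set$, check well-definedness of $(\mu_J,w_J)$ via $\sem{\mu_I}(w_I)=\sem{\mu_J}(\sem{p_{I\supseteq J}}(w_I))$, and verify the colimit universal property by the computation $\sem{g}(\sem{\mu_I}(w_I))=\sem{g_J}(w_J)=v$. Your explicit preliminary remark that preservation of $F$-products by $\sem{\_}_\Sigma$ guarantees the existence and uniqueness of $w_J$ is a point the paper leaves implicit (it is buried in the phrase ``since $\sem{p_{J,i}}$ are cartezian projections''), so flagging it is a welcome but not substantively different addition.
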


\begin{proof}
Let $(M_i)_{i\in I}$ be a family in $|\Mod(\Sigma)|$ and $F$ be a 
filter over $I$. 
We first show that for each $J\in F$,    
\begin{equation}\label{dir-prod-eqq}
\{ (p_{J,i}, w_J)\co (M_J, w_J)\ra (M_i,w_i) \mid i\in J \}
\end{equation}
is a direct product in $\Mod^\sharp(\Sigma)$. 
By the definition of $w_J$, each $(p_{J,i},w_J)$ is well defined, i.e. 
$\sem{p_{J,i}} (w_J) = w_i$. 

For any family $\{ (f_i,v) \co (N,v) \ra (M_i, w_i) \mid i\in J \}$,
by the universal property of the direct products in $\Mod(\Sigma)$
there exists an unique $f \co N \ra M_J$ such that for
each $i\in J$, $f ;p_{J,i} = f_i$. 
$$\xy
\xymatrix @C+1em{
(M_J, w_J) \ar[d]_{(p_{J,i},w_J)} & (N,v) \ar[l]_-{(f,v)}
\ar[dl]^{(f_i,v)}\\ 
(M_i,w_i) & \\
}
\endxy$$
Hence, for each $i\in J$, 
$\sem{p_{J,i}}(\sem{f}(v)) = \sem{f_i}(v) = w_i$.
Since $\sem{p_{J,i}}$ are cartezian projections, it follows that
$\sem{f}(v)= w_J$.   
This completes the proof of the universal property of the direct
product (\ref{dir-prod-eqq}). 

It follows immediately that for each $J' \subset J \in F$, 
$(p_{J \supseteq J'},w_J) \co (M_J, w_J) \ra (M_{J'}, w_{J'})$ is a
corresponding canonical projection in $\Mod^\sharp(\Sigma)$.  
Let us show that (\ref{filt-prod-sharp-eqq}) is a co-limit in
$\Mod^\sharp(\Sigma)$. 
$$\xy
\xymatrix @C+1em {
 & (M_J,w_J) \ar[dl]_{(p_{J\supseteq J'},w_J)} \ar[d]^{(\mu_J,w_J)} 
  \ar@/^2em/[ddr]^{(\nu_J,w_J)} & \\
(M_{J'},w_{J'}) \ar[r]_-{(\mu_{J'},w_{J'})}
\ar@/_2em/[drr]_{(\nu_{J'},w_{J'})} &
(M_F,\sem{\mu_I}(w_I)) \ar[dr]_-{(f,\sem{\mu_I}(w_I))} & \\
 & & (N,v) 
}
\endxy$$
First, note that each $(\mu_J,w_J)$ is well defined, i.e. that 
$\sem{\mu_J}(w_J) = \sem{\mu_I}(w_I)$, which is given by the following
calculation: 
\[
\sem{\mu_I}(w_I) = \sem{p_{I \supset J};\mu_J}(w_I) = 
\sem{\mu_J}(\sem{p_{I \supset J}}(w_I)) = \sem{\mu_J}(w_J).
\]
For establishing the universal property of the co-cone
$(\mu_J,w_J)_{J\in F}$ let us consider another co-cone
$(\nu_J,w_J)_{J\in F}$ over 
$(p_{J \supset J'},w_J)_{J \supset J' \in F}$.
Let $(N,v)$ denote it vertex.
By the universal property of $(\mu_J)_{J\in F}$ in $\Mod(\Sigma)$
there exists an unique $f \co M_F \ra N$ such that for each $J\in F$,
$\mu_J ; f = \nu_J$. 
The argument is completed if we showed that 
$\sem{f}(\sem{\mu_I}(w_I)) = v$.
This holds by the following calculation:
\[
\sem{f}(\sem{\mu_I}(w_I)) = \sem{\mu_I ; f}(w_I) = \sem{\nu_I}(w_I) =
v. 
\]
\end{proof}

\begin{corollary}\label{chi-pres-cor}
For any signature morphism $\chi$ in any  stratified
institution $\I$ with concrete $F$-products, if $\Mod(\chi)$ preserves
$F$-products in $\I$ then $\Mod^\sharp (\chi)$ preserves $F$-products
in $\I^\sharp$.     
\end{corollary}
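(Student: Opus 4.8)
The plan is to apply $\Mod^\sharp(\chi)$ to the canonical $F$-product furnished by Prop.~\ref{filt-prod-sharp-prop} and to recognise the result, by way of the naturality of $\sem{\_}$, as the canonical $F$-product — again as given by Prop.~\ref{filt-prod-sharp-prop} — of the image family. Since $F$-products are unique up to isomorphism, since $\Mod^\sharp(\chi)$ carries isomorphic co-cones to isomorphic co-cones, and since being an $F$-product is an isomorphism-invariant property, it suffices to verify preservation on one representative $F$-product of each family.

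First I would fix a family $\{(M'_i,w'_i)\mid i\in I\}$ in $|\Mod^\sharp(\Sigma')|$ and write its canonical $F$-product as in Prop.~\ref{filt-prod-sharp-prop}: $\{(\mu'_J,w'_J)\co (M'_J,w'_J)\ra (M'_F,\sem{\mu'_I}(w'_I))\mid J\in F\}$, built over an $F$-product $\{\mu'_J\co M'_J\ra M'_F\mid J\in F\}$ in $\Mod(\Sigma')$ with underlying direct-product projections $p'_{J,i}\co M'_J\ra M'_i$, where $w'_J$ is the unique element of $\sem{M'_J}_{\Sigma'}$ with $\sem{p'_{J,i}}(w'_J)=w'_i$ for all $i\in J$. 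Applying $\Mod^\sharp(\chi)$ entrywise turns this co-cone into $\{(\Mod(\chi)(\mu'_J),\sem{M'_J}_\chi(w'_J))\co (\Mod(\chi)(M'_J),\sem{M'_J}_\chi(w'_J))\ra (\Mod(\chi)(M'_F),\sem{M'_F}_\chi(\sem{\mu'_I}(w'_I)))\mid J\in F\}$. Since $\Mod(\chi)$ preserves $F$-products, $\{\Mod(\chi)(\mu'_J)\mid J\in F\}$ is an $F$-product in $\Mod(\Sigma)$ of $(\Mod(\chi)(M'_i))_{i\in I}$, with $\Mod(\chi)(M'_J)$ carrying the maps $\Mod(\chi)(p'_{J,i})$ as the underlying direct products.

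It then remains to check that the state data match those prescribed by Prop.~\ref{filt-prod-sharp-prop} for the image family $\{(\Mod(\chi)(M'_i),\sem{M'_i}_\chi(w'_i))\mid i\in I\}$; that is, (i) $\sem{M'_J}_\chi(w'_J)$ is the unique element of $\sem{\Mod(\chi)(M'_J)}_\Sigma$ whose $\Mod(\chi)(p'_{J,i})$-projection is $\sem{M'_i}_\chi(w'_i)$ for every $i\in J$, and (ii) $\sem{M'_F}_\chi(\sem{\mu'_I}(w'_I))=\sem{\Mod(\chi)(\mu'_I)}(\sem{M'_I}_\chi(w'_I))$. For the existence half of (i) I would instantiate the naturality square~(\ref{diag1}) of $\sem{\_}_\chi$ at $h'=p'_{J,i}\co M'_J\ra M'_i$, evaluate at $w'_J$, and use the defining property $\sem{p'_{J,i}}(w'_J)=w'_i$; the uniqueness half follows because $\sem{\_}_\Sigma$ preserves the direct product $\Mod(\chi)(M'_J)$ (part of concreteness of $F$-products), so an element of $\sem{\Mod(\chi)(M'_J)}_\Sigma$ is determined by its projections. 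Likewise, instantiating~(\ref{diag1}) at $h'=\mu'_I\co M'_I\ra M'_F$ and evaluating at $w'_I$ yields (ii). This identifies the image co-cone with the canonical $F$-product of the image family, which proves the claim.

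The only genuinely delicate point is the step used in the second paragraph that $\Mod(\chi)(M'_J)$ with the maps $\Mod(\chi)(p'_{J,i})$ is a direct product in $\Mod(\Sigma)$ — i.e. that $\Mod(\chi)$ preserves not merely the co-limiting co-cones of the filtered-product diagrams but also the direct products sitting beneath them. This is part of the standing reading of ``preserves $F$-products'': as observed before Dfn.~\ref{lift-fprod-dfn} it holds whenever $\Mod(\chi)$ preserves direct products and directed co-limits, which is the case in all the examples of the paper. Once it is granted, the remainder of the argument is a routine chase through the naturality squares of $\sem{\_}$.
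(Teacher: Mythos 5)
Your argument is correct, and it reaches the conclusion by a slightly different route than the paper. The paper also restricts attention to the canonical $F$-products supplied by Prop.~\ref{filt-prod-sharp-prop}, but then it re-verifies the two universal properties in $\Mod^\sharp(\Sigma)$ from scratch: for each $J$ it checks that the image cone is a direct product (lifting a competing cone through the $\Mod(\chi)$-preserved product in $\I$ and checking the state equation via the naturality square~(\ref{diag1})), and then it checks the co-limit property the same way (obtaining the mediating morphism from the $\Mod(\chi)$-preserved $F$-product in $\Mod(\Sigma)$ and verifying its state condition by another~(\ref{diag1}) computation). You avoid both verifications by invoking Prop.~\ref{filt-prod-sharp-prop} a second time, on the target side: you show, by exactly the two naturality computations you describe (at $h'=p'_{J,i}$ and at $h'=\mu'_I$, together with uniqueness coming from $\sem{\_}_\Sigma$ sending the products to products in $\Set$), that the $\Mod^\sharp(\chi)$-image of the canonical $F$-product \emph{is} the canonical $F$-product of the image family built over the $F$-product $\{\Mod(\chi)(\mu'_J)\}_{J\in F}$ in $\Mod(\Sigma)$. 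This buys a shorter proof, at the price of the explicit up-to-isomorphism reduction in your first paragraph — which is sound, and which the paper leaves implicit when it says ``let \dots be an $F$-product like in Prop.~\ref{filt-prod-sharp-prop}''. The ``delicate point'' you flag, namely that preservation of $F$-products is read as including preservation of the direct products underlying the filtered-product diagram (both for $\Mod(\chi)$ and, via concreteness, for $\sem{\_}_\Sigma$), is not a defect of your proof relative to the paper: the paper's own argument uses the same reading verbatim (``since $\Mod(\chi)$ preserves products in $\Mod(\Sigma)$ \dots'' and ``by the fact that $\Mod(\chi)$ and $\sem{\_}_\Sigma$ preserve direct products''), so making it explicit, as you do, is if anything an improvement.
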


\begin{proof}
Let $\chi \co \Sigma \ra \Sigma'$ be signature morphism such that
$\Mod(\chi)$ preserves $F$-products and let 
\[
\{ (\mu'_J,w_J) \co (M'_J,w_J) \ra
(M'_F,\sem{\mu'_I}(w_I)) \mid J \in F \}
\] 
be an $F$-product in $\Mod^\sharp (\Sigma')$ like in
Prop.~\ref{filt-prod-sharp-prop}. 
We denote $\Mod(\chi)(M'_i) = M_i$, $\Mod(\chi)(M'_J) = M_J$,
$\Mod(\chi)(M'_F) = M_F$, and $\Mod(\chi)(\mu'_J) = \mu_J$.  
We have to show that 
\[
\{ (\mu_J,\sem{M'_J}_\chi (w_J)) \co 
(M_J,\sem{M'_J}_\chi (w_J)) \ra
(M_F,\sem{M'_F}_\chi (\sem{\mu'_I}(w_I))) \mid J \in F \}
\]
is an $F$-product in $\Mod^\sharp (\Sigma)$.  
First we should establish that for each $J\in F$
\begin{equation}\label{prod-eqq}
\{ (\Mod(\chi)(p_{J,i}),\sem{M'_J}_\chi (w_J)) \co 
(M_J,\sem{M'_J}_\chi (w_J)) \ra 
(M_i,\sem{M'_i}_\chi (w_i)) \mid i\in J \}
\end{equation}
is a direct product.
Consider 
\[
\{ (f_i,v) \co (N,v) \ra (M_i,\sem{M'_i}_\chi (w_i)) \mid i\in J \}.
\]
Since $\Mod(\chi)$ preserves products in $\Mod(\Sigma)$, we have that
the $\I$ part of (\ref{prod-eqq}) is a direct product, hence let $f
\co N \ra M_i$ such that $f;\Mod(\chi)(p_{J,i}) = f_i$.  
For showing that (\ref{prod-eqq}) is a direct product in $\Mod^\sharp
(\Sigma)$ it remains to show that $\sem{f}_\Sigma (v) =
\sem{M'_J}_\chi (w_J)$. 
This holds by the following calculation 
$$\begin{array}{rll}
\sem{\Mod(\chi)(p_{J,i})}_\Sigma (\sem{f}_\Sigma (v)) = & 
  \sem{f_i}(v) & \\
 = & \sem{M'_i}_\chi (w_i) & (\text{by the definition of }f_i) \\
 = & \sem{M'_i}_\chi (\sem{p_{J,i}}_{\Sigma'}(w_J))
   & (\text{by the definition of }w_J) \\
 = & \sem{\Mod(\chi)(p_{J,i})}_\Sigma (\sem{M'_J}_\chi (w_J))
   & (\text{by (\ref{diag1})})
\end{array}$$
and by the fact that $\Mod(\chi)$ and $\sem{\_}_\Sigma$ preserve
direct products, we hav that $\sem{\Mod(\chi)(p_{J,i})}_\Sigma$ are
direct product projections.   

Then it follows immediately that 
$\{ (\Mod(\chi)(p_{J\supseteq J'}),\sem{M'_J}_\chi (w_J)) \mid J'
\subseteq J \in F \}$ is a diagram of projections. 
Now consider any co-cone for the above diagram as follows:
\[
\{ (\nu_J,\sem{M'_J}_\chi (w_J)) \co (M_J,\sem{M'_J}_\chi (w_J)) \ra
(N,v) \mid J \in F \}. 
\]
Since $\Mod(\chi)$ preserves $F$-products it follows that 
$\{ \mu_J \co M_J \ra M_F \mid J \in F \}$ is an $F$-product in
$\Mod(\Sigma)$, hence there exists an unique $f\co M_F \ra N$ such
that for each $J\in F$, $\mu_J ; f = \nu_J$.  
In order to show that $(f,\sem{M'_F}_\chi (\sem{\mu'_I}(w_I)))$ is a
$\Mod^\sharp (\Sigma)$ homomorphism 
$(M_F,\sem{M'_F}_\chi (\sem{\mu'_I}(w_I))) \ra (N,v)$ we still have to
show that  
$\sem{f}_\Sigma (\sem{M'_F}_\chi (\sem{\mu'_I}_{\Sigma'}(w_I))) = v$.
This holds by the following calculation:
$$\begin{array}{rll}
\sem{f}_\Sigma (\sem{M'_F}_\chi (\sem{\mu'_I}_{\Sigma'})) = 
  & \sem{f}_\Sigma (\sem{\mu_I}_\Sigma (\sem{M'_I}_\chi (w_I)))
  & (\text{by (\ref{diag1})}) \\
 = & \sem{\nu_I}_\Sigma ((\sem{M'_I}_\chi (w_I))
   & (\text{since } \mu_I ; f = \nu_I) \\
 = & v & (\text{by the homomorphism property of
 }(\nu_I,(\sem{M'_I}_\chi (w_I))). 
\end{array}$$
\end{proof}

\subsection{{\L}o\v{s} theorem in stratified institutions}

The following definition generalizes the corresponding modal
preservation concept of \cite{ks,iimt} to the much more general
setup of stratified institutions.   

\begin{definition}\label{sen-pres-dfn}
Let $\F$ be a class of filters and let $\I$ be a 
stratified institution with $\F$-products. 
A $\Sigma$-sentence $\rho$ is
\begin{itemize} \smallitems

\item \emph{preserved by $\F$-products}
  when for each   $w\in \sem{M_F}$,
  ``there exists $J\in F$ and $k\in \sem{\mu_J}^{-1} (w)$ such that
  $M_j \models^{k_j} \rho$ for each $j\in J$'' implies
  $M_F \models^w \rho$, and

\item \emph{preserved by $\F$-factors}
  when for each $w\in \sem{M_F}$,
  $M_F \models^w \rho$ implies
  ``there exists $J\in F$ and $k\in \sem{\mu_J}^{-1} (w)$ such that
  $M_j \models^{k_j} \rho$ for each $j\in J$''

\end{itemize}
for each filter $F\in \F$ over a set $I$ and for each family
$( M_j )_{j\in I}$ of $\Sigma$-models, and where 
$\{ \mu_J \co M_J \ra M_F \mid J \in F \}$ denotes an $F$-product of 
$( M_j )_{j\in I}$ and $k_j = \sem{p_{J,j}}_\Sigma (k)$. 
\end{definition}

When all $\sem{M}_\Sigma$ have singletons as their underlying sets,
Dfn.~\ref{sen-pres-dfn} yields the preservation by
$\F$-products/factors in ordinary institutions as defined in
\cite{upins,iimt}. 
On the other hand, the following result shows that stratified
preservations by $\F$-products/factors of Dfn.~\ref{sen-pres-dfn} may
be an instance of their ordinary versions from \cite{upins,iimt}.  

\begin{proposition}\label{pres-sharp-prop}
For any  stratified institution $\I$ with concrete
$\F$-products the following are equivalent for any $\Sigma$-sentence
$\rho$: 
\begin{enumerate}

\item $\rho$ is preserved by $\F$-products/factors in $\I$; and 

\item $\rho$ is preserved by $\F$-products/factors in $\I^\sharp$. 

\end{enumerate}
\end{proposition}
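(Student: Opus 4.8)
The plan is to run the whole argument through the representation of $F$-products in $\I^\sharp$ supplied by Proposition~\ref{filt-prod-sharp-prop}; this is precisely the device that converts the stratified conditions of Dfn.~\ref{sen-pres-dfn} into the ordinary ones for $\I^\sharp$ and back. Two preliminary observations. First, preservation by $\F$-products (resp.\ $\F$-factors) of a sentence in an ordinary institution does not depend on the chosen $F$-product, since any two are isomorphic and $\models^\sharp$ is preserved by model isomorphisms (the remark after Fact~\ref{sharp-institution-fact}); hence it suffices to test the $\I^\sharp$-conditions on the particular $F$-products of Proposition~\ref{filt-prod-sharp-prop}. Second, as $\I$ has \emph{concrete} $F$-products, for any family $\{M_i\mid i\in I\}$ and any $F$-product $\{\mu_J\co M_J\ra M_F\mid J\in F\}$ the set $\sem{M_F}_\Sigma$ is the filtered colimit in $\Set$ of the direct products $\sem{M_J}_\Sigma=\prod_{i\in J}\sem{M_i}_\Sigma$ along the coordinate projections $\sem{p_{J\supseteq J'}}_\Sigma$; so every $w\in\sem{M_F}$ equals $\sem{\mu_J}(k)$ for some $J\in F$ and $k\in\sem{M_J}$, and $\sem{\mu_J}(k)=\sem{\mu_J}(k')$ iff $k$ and $k'$ agree on every coordinate $i\in J'$ for some $J'\in F$ with $J'\subseteq J$. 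These two facts about filtered colimits, together with Proposition~\ref{filt-prod-sharp-prop}, are all that is needed.

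For $(1)\Rightarrow(2)$, fix $F\in\F$ over $I$ and a family $\{(M_i,w_i)\mid i\in I\}$ of $\Mod^\sharp(\Sigma)$-models; by Proposition~\ref{filt-prod-sharp-prop} its $F$-product has vertex $(M_F,\sem{\mu_I}(w_I))$ with $w_I=(w_i)_{i\in I}$, $w_J=(w_i)_{i\in J}$, $\sem{\mu_J}(w_J)=\sem{\mu_I}(w_I)$. For \emph{products}: if $M_j\models^{w_j}\rho$ for all $j$ in some $J\in F$, then $w:=\sem{\mu_I}(w_I)$ is witnessed in $\I$ by $J$ and $k:=w_J\in\sem{\mu_J}^{-1}(w)$ (with $\sem{p_{J,j}}(k)=w_j$), so preservation by $\F$-products in $\I$ gives $M_F\models^w\rho$, i.e.\ the vertex satisfies $\rho$. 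For \emph{factors}: if $M_F\models^{w_F}\rho$ with $w_F=\sem{\mu_I}(w_I)$, preservation by $\F$-factors in $\I$ yields some $J\in F$ and some $k\in\sem{\mu_J}^{-1}(w_F)$ with $M_j\models^{\sem{p_{J,j}}(k)}\rho$ for all $j\in J$; since $\sem{\mu_J}(k)=w_F=\sem{\mu_J}(w_J)$, the second filtered-colimit fact gives $J'\in F$, $J'\subseteq J$, with $\sem{p_{J,i}}(k)=w_i$ for all $i\in J'$, whence $M_i\models^{w_i}\rho$ for all $i\in J'$, which is the conclusion in $\I^\sharp$.

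For $(2)\Rightarrow(1)$ I would lift an $\I$-configuration into $\I^\sharp$. Start with $\{M_i\mid i\in I\}$, an $F$-product $\{\mu_J\co M_J\ra M_F\}$, and $w\in\sem{M_F}$. For \emph{factors}, write $w=\sem{\mu_{J_0}}(k_0)$ with $J_0\in F$, $k_0=(k_{0,i})_{i\in J_0}$; put $v_i:=k_{0,i}$ for $i\in J_0$ and $v_i\in\sem{M_i}$ arbitrary for $i\notin J_0$, and apply Proposition~\ref{filt-prod-sharp-prop} to $\{(M_i,v_i)\mid i\in I\}$: its $F$-product has vertex $(M_F,\sem{\mu_I}(v_I))=(M_F,\sem{\mu_{J_0}}(k_0))=(M_F,w)$. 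If $M_F\models^w\rho$ then preservation by $\F$-factors in $\I^\sharp$ yields $J\in F$ with $M_j\models^{v_j}\rho$ for all $j\in J$, and then $k:=(v_j)_{j\in J}\in\sem{\mu_J}^{-1}(w)$ with $\sem{p_{J,j}}(k)=v_j$ is the required $\I$-witness. For \emph{products}, given a witness $(J,k)$ with $M_j\models^{k_j}\rho$ for $j\in J$, set $v_i:=k_i$ for $i\in J$ and arbitrary otherwise; again the $\I^\sharp$-$F$-product of $\{(M_i,v_i)\}$ has vertex $(M_F,w)$, and since $(M_j,v_j)\models^\sharp\rho$ for all $j\in J\in F$, preservation by $\F$-products in $\I^\sharp$ gives $M_F\models^w\rho$.

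The main obstacle is mild and twofold. The \emph{factors} half of $(1)\Rightarrow(2)$ genuinely needs that $\sem{M_F}$ is a \emph{filtered} colimit of sets: the witness $k$ returned by $\I$-preservation need not be the diagonal tuple $w_J$, and one must use that two elements equalized by a colimit injection are already equalized at an earlier stage; the remaining steps are just the compositionality of $\sem{\_}$ aligning the concrete projections $\sem{p_{J,i}}_\Sigma$ and injections $\sem{\mu_J}_\Sigma$. The other point is that in $(2)\Rightarrow(1)$ the arbitrary choices $v_i$ outside the relevant block presuppose the stratification sets $\sem{M_i}_\Sigma$ to be non-empty, which holds in all examples of Section~\ref{ex-sect}; should one wish to avoid even this, one restricts, via a presentation $w=\sem{\mu_{J_0}}(k_0)$, to the block $J_0$ on which all $\sem{M_i}$ are non-empty and passes to the reduced filter $F|_{J_0}$, which is harmless once $\F$ is closed under reductions.
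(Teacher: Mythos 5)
Your proposal is correct and follows essentially the same route as the paper's own proof: reduce to the canonical $F$-products of Prop.~\ref{filt-prod-sharp-prop} (justified by preservation of satisfaction under isomorphisms), use the directed-colimit fact that elements identified by $\sem{\mu_J}$ are already identified at some earlier stage $J'\in F$ for the factors half of $(1)\Rightarrow(2)$, and fill in arbitrary states outside the witnessing block for $(2)\Rightarrow(1)$. The non-emptiness caveat you flag for those arbitrary choices is equally implicit in the paper's proof (which picks $k_i\in\sem{M_i}$ for $i\notin J$ and $k\in\sem{\mu_I}^{-1}(w)$ without comment), so your extra care there is a refinement rather than a divergence.
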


\begin{proof}
In this proof we use the notations of
Prop.~\ref{filt-prod-sharp-prop}.  
First note that since $\I$ has $\F$-products, by
Prop.~\ref{filt-prod-sharp-prop} $\I^\sharp$ has $\F$-products too.
Moreover, by the assumption of preservation of satisfaction by model
isomorphisms, without any loss of generality, we may consider only the
$F$-products given by (\ref{filt-prod-sharp-eqq}) of
Prop.~\ref{filt-prod-sharp-prop}. 

\fbox{$1. \impl 2.$}
For the preservation by $\F$-products, let $(M_i,w_i)_{i\in I}$ and $F\in
\F$ filter over $I$ and assume that there exists $J \in F$ such that
for each $j\in J$, $(M_j,w_j)\models^\sharp \rho$. 
By the definition of $\models^\sharp$ we have that for each $j\in J$,
$M_j \models^{w_j} \rho$.
By 1. it follows that $M_F \models^{\sem{\mu_J} (w_J)}
\rho$. 
Since $\sem{\mu_I}(w_I) = \sem{\mu_J}(\sem{p_{I \supseteq J}}(w_I)) = 
\sem{\mu_J}(w_J)$ it follows that $(M_F,\sem{\mu_I}(w_I)) \models
\rho$. 

For the preservation by $\F$-factors, let $(M_i,w_i)_{i\in I}$ and 
$F\in \F$ filter over $I$ such that $(M_F,\sem{\mu_I}(w_I))
\models^\sharp \rho$.  
Hence $M_F \models^{w'} \rho$ where $w' = \sem{\mu_I}(w_I)$. 
By the hypothesis 1. there exists $J \in F$ and $k\in
\sem{\mu_J}^{-1}(w')$ such that for each $j\in J$, $M_j \models^{k_j}
\rho$. 
Because $\sem{\mu_J} (k) = \sem{\mu_J} (w_J)$ we have that there
exists $J \supseteq J' \in F$ such that $\sem{p_{J,J'}}(k) = w_{J'}$.
Hence for each $j\in J'$, $(M_j,w_j) \models^\sharp \rho$. 

\fbox{$2. \impl 1.$}
For the preservation by $\F$-products, let $(M_i)_{i\in I}$ and $F\in
\F$ filter over $I$ and for any fixed $w\in \sem{M_F}$ assume
that there exists $J \in F$ and $k\in \sem{\mu_J}^{-1}(w)$ 
such that for each $j\in J$, $M_j \models^{k_j} \rho$.     
Let us take any $k_i \in \sem{M_i}$ for each $i\not\in J$ and let
$k_I$ be defined by $\sem{p_{I,i}}(k_I) = k_i$ for each $i\in I$. 
Since for each $j\in J$, $(M_j,k_j) \models^\sharp \rho$, by 2. it
follows that $(M_F,\sem{\mu_I}(k_I)) \models^\sharp \rho$.
Since $\sem{\mu_I}(k_I) = \sem{\mu_J}(k) = w$
it means that $M_F \models^w \rho$. 

For the preservation by $\F$-factors, let $(M_i)_{i\in I}$ and 
$F\in \F$ filter over $I$.
Let us assume that $M_F \models^w \rho$.
Let $k \in \sem{\mu_I}^{-1} (w)$. 
By (\ref{filt-prod-sharp-eqq}) of Prop.~\ref{filt-prod-sharp-prop} we
have that $(M_F,w)$  is the $F$-product of $((M_i,k_i))_{i\in I}$. 
By the hypothesis 2. there exists $J\in F$ such that for each $j\in
J$, $(M_j,k_j) \models^\sharp \rho$, which means $M_j \models^{k_j}
\rho$. 
\end{proof}

\begin{proposition}\label{pres-star-prop}
For any stratified institution $\I$ with $F$-products, if a sentence
$\rho$ is preserved by $F$-products in $\I$ then it is preserved by
$F$-products in $\I^*$ too. 
\end{proposition}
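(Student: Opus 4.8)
The plan is to argue directly from the definitions, without any induction on the structure of $\rho$: I would reduce the global statement $M_F\models^*\rho$ to the family of local statements $M_F\models^w\rho$, one for each state $w\in\sem{M_F}$, and obtain each of them by feeding a suitable state and preimage into the hypothesis that $\rho$ is preserved by $F$-products in $\I$.

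Concretely, I would fix a family $(M_j)_{j\in I}$ of $\Sigma$-models, a filter $F$ over $I$, and an $F$-product $\{\mu_J\co M_J\ra M_F\mid J\in F\}$ in $\Mod(\Sigma)$, and assume there is some $J\in F$ with $M_j\models^*\rho$ for every $j\in J$; by the definition of $\models^*$ this says $M_j\models^v\rho$ for every $j\in J$ and every $v\in\sem{M_j}$. The goal is $M_F\models^w\rho$ for an arbitrary $w\in\sem{M_F}$. The key step is to produce $J'\in F$ with $J'\subseteq J$ together with $k\in\sem{\mu_{J'}}^{-1}(w)$: since $\sem{\_}_\Sigma$ preserves $F$-products, $\sem{M_F}$ with the maps $\sem{\mu_J}$ is the directed co-limit of the $\sem{M_J}$ in $\Set$, so $w=\sem{\mu_{J''}}(k'')$ for some $J''\in F$ and some $k''\in\sem{M_{J''}}$; replacing $J''$ by $J'=J\cap J''\in F$ and $k''$ by its image $k=\sem{p_{J''\supseteq J'}}_\Sigma(k'')$ keeps $\sem{\mu_{J'}}(k)=w$ while ensuring $J'\subseteq J$. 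Now for each $j\in J'$ we have $j\in J$, hence $M_j\models^{k_j}\rho$ where $k_j=\sem{p_{J',j}}_\Sigma(k)$, so the hypothesis of preservation by $F$-products in $\I$ applies at the state $w$ and yields $M_F\models^w\rho$. Since $w$ was arbitrary, $M_F\models^*\rho$, which is exactly preservation by $F$-products in $\I^*$.

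The one delicate point, and the thing I would be most careful about, is the production of a preimage $k$ of $w$ under some $\sem{\mu_{J'}}$: this is precisely where one uses that $\sem{\_}_\Sigma$ turns the co-limit defining the $F$-product into a co-limit in $\Set$ (i.e.\ that the $F$-products are concrete, as they are in every example of Sect.~\ref{ex-sect} by Ex.~\ref{f-prod-ex}). Without this, a state of $\sem{M_F}$ lying outside the image of every $\sem{\mu_J}$ would be left uncontrolled, because preservation by $F$-products in $\I$ makes no claim at such a state; so I would read the hypothesis ``$\I$ has $F$-products'' here as including that $\sem{\_}_\Sigma$ preserves them. Everything else is routine bookkeeping with the directed system of projections and with the definition of $\models^*$; in particular no further hypotheses on $\rho$ (preservation by $F$-factors, surjectivity of $\sem{\_}$, existence of connectives) are needed.
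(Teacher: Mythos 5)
Your argument is the paper's own: fix $w\in\sem{M_F}$, produce $k\in\sem{\mu_{J'}}^{-1}(w)$ for some $J'\in F$ contained in the given $J$, use $M_j\models^*\rho$ to get $M_j\models^{k_j}\rho$ for $j\in J'$, and invoke preservation by $F$-products in $\I$ at the state $w$; so the proposal is correct and essentially identical in structure to the paper's proof. The only divergence is exactly the delicate point you flag: the paper states the result for plain (not necessarily concrete) $F$-products and its proof simply takes ``any $k\in\sem{\mu_{J'}}^{-1}(w)$'' without addressing whether that preimage is inhabited, whereas you obtain $k$ from the directed-colimit description of $\sem{M_F}$ in $\Set$, i.e.\ under the stronger reading that the $F$-products are concrete --- an assumption the paper's statement does not make explicit but which its own argument tacitly needs at the same spot.
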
 

\begin{proof}
Let us assume that $J'= \{ j\in I \mid M_j \models^* \rho \} \in F$
for $\{ \mu_J \co M_J \ra M_F \mid J \in F \}$ an $F$-product of a
family $( M_j )_{j\in I}$ of $\Sigma$-models. 
Let $w\in \sem{M_F}$. 
For any $k\in \sem{\mu_{J'}}^{-1}(w)$ and each $j\in J'$ we have that
$M_j \models^{k_j} \rho$ (since $M_j \models^* \rho$). 
Because $\rho$ is preserved by $F$-products in $\I$ it follows that
$M_F \models^w \rho$.
Hence $M_F \models^* \rho$. 
\end{proof}

According to \cite{upins,iimt} any institution in which all
its sentences are preserved by ultraproducts is m-compact.
Hence from Prop.~\ref{pres-star-prop} and \ref{pres-sharp-prop} we get
the following consequence.

\begin{corollary}\label{compact-cor}
Let $\I$ be a stratified institution with ultraproducts such that each
of its sentences are preserved by ultraproducts.
Then 
\begin{enumerate}

\item $\I^*$ is m-compact; and 

\item if in addition the ultraproducts are concrete then $\I^\sharp$
  is m-compact too.  
 
\end{enumerate}
\end{corollary}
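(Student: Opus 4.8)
The plan is to reduce both claims to the known fact (stated just before the corollary, from \cite{upins,iimt}) that an ordinary institution in which every sentence is preserved by ultraproducts is m-compact, transporting the hypothesis on $\I$ to $\I^*$ and to $\I^\sharp$ through the preservation results already established. Throughout, the relevant class of filters $\F$ is the class of all ultrafilters. The elementary point that makes everything glue together is that $\I$, $\I^*$ and $\I^\sharp$ share the same sentence functor $\Sen$, so that the property ``all $\Sigma$-sentences are preserved by ultraproducts'' transports verbatim once the corresponding signature-wise preservation statement is available.

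For item~1, I would first observe that $\I^*$ has ultraproducts: it reuses the model functor $\Mod$ of $\I$, and $\I$ has ultraproducts by hypothesis. Then, applying Prop.~\ref{pres-star-prop} to each ultrafilter $F$, every $\Sigma$-sentence $\rho$ --- preserved by $F$-products in $\I$ by hypothesis --- is preserved by $F$-products in $\I^*$. So every sentence of $\I^*$ is preserved by ultraproducts, and the cited fact yields that $\I^*$ is m-compact.

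For item~2, assume in addition that the ultraproducts of $\I$ are concrete. Then by Prop.~\ref{filt-prod-sharp-prop}, $\I^\sharp$ has ultraproducts (given explicitly by the formula~(\ref{filt-prod-sharp-eqq})), so the cited m-compactness criterion is applicable to $\I^\sharp$ once its sentences are known to be preserved by ultraproducts. But by Prop.~\ref{pres-sharp-prop}, taking $\F$ to be the class of all ultrafilters, a $\Sigma$-sentence is preserved by $\F$-products in $\I$ if and only if it is preserved by $\F$-products in $\I^\sharp$; since all sentences of $\I$ are preserved by ultraproducts, so are all sentences of $\I^\sharp$. Hence $\I^\sharp$ is m-compact.

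There is no real obstacle here: the substantive work is already done in Props.~\ref{pres-star-prop}, \ref{pres-sharp-prop} and~\ref{filt-prod-sharp-prop} together with the ordinary-institution result of \cite{upins,iimt}. The only things needing attention are (a) checking that $\I^*$ and $\I^\sharp$ actually possess ultraproducts --- immediate for $\I^*$ since it reuses $\Mod$, and exactly what the concreteness hypothesis buys for $\I^\sharp$ via Prop.~\ref{filt-prod-sharp-prop} --- and (b) noting that the preservation hypothesis, concerning sentences only, survives the passage to $\I^*$ and $\I^\sharp$ because the sentence functor is left unchanged by both constructions.
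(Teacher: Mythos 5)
Your proposal is correct and follows essentially the same route as the paper: the paper likewise invokes the fact from \cite{upins,iimt} that an ordinary institution whose sentences are all preserved by ultraproducts is m-compact, and then transfers the preservation hypothesis to $\I^*$ via Prop.~\ref{pres-star-prop} and to $\I^\sharp$ via Prop.~\ref{pres-sharp-prop} (with Prop.~\ref{filt-prod-sharp-prop} supplying the ultraproducts in $\I^\sharp$ under concreteness). Your additional remarks on the existence of ultraproducts in $\I^*$ and $\I^\sharp$ and on the shared sentence functor are exactly the implicit bookkeeping behind the paper's one-line argument.
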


The following consequence of Prop.~\ref{pres-sharp-prop} represents a
transfer of preservation results from ordinary institutions to
stratified institutions.  

\begin{corollary}\label{conn-cor}
In any  stratified institution $\I$ with concrete
$\F$-products 
\begin{enumerate}

\item both the sentences preserved by $\F$-products and those
  preserved by $\F$-factors are closed under conjunctions; 

\item if $\rho$ is preserved by $\F$-products then $\neg \rho$ is
  preserved by $\F$-factors; 

\item if $\rho$ is preserved by $\F$-factors and $\F$ contains only
  ultrafilters then $\neg \rho$ is preserved by $\F$-products; and

\item if $\F$ is closed under reductions, $\Mod(\chi)$ preserves
  $\F$-products, and $\rho$ is preserved by $\F$-products then 
    $(\exists \chi)\rho$ is preserved by $\F$-products. 

\end{enumerate} 
\end{corollary}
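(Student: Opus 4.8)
The plan is to prove all four clauses by transporting them to the ordinary institution $\I^\sharp$, where the corresponding preservation results are already established in \cite{upins,iimt}, and then pulling them back. The two bridges that make this work are already available: since $\I$ has concrete $\F$-products, Proposition~\ref{pres-sharp-prop} tells us that a $\Sigma$-sentence is preserved by $\F$-products (resp.\ $\F$-factors) in $\I$ if and only if it is so preserved in $\I^\sharp$; and Fact~\ref{logic-sharp-fact} tells us that the external conjunctions, disjunctions, negations and existential/universal $\chi$-quantifications of $\I$ coincide with those of $\I^\sharp$. So the syntactic combinations named in clauses (1)--(4) denote the very same (semantic equivalence classes of) sentences whether read in $\I$ or in $\I^\sharp$, and it suffices to verify the statements in $\I^\sharp$.

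Concretely, for clause~(1) I would start from $\rho_1,\rho_2$ preserved by $\F$-products in $\I$, apply Proposition~\ref{pres-sharp-prop} to get that they are preserved by $\F$-products in $\I^\sharp$, invoke the ordinary-institution fact that the class of sentences preserved by $\F$-products is closed under conjunction (and symmetrically for $\F$-factors), conclude that $\rho_1 \wedge \rho_2$ — by Fact~\ref{logic-sharp-fact} the same sentence in both institutions — is preserved in $\I^\sharp$, and transfer back via Proposition~\ref{pres-sharp-prop}. Clauses~(2) and~(3) are handled identically, using respectively the ordinary-institution facts that the negation of a sentence preserved by $\F$-products is preserved by $\F$-factors, and that — when $\F$ consists of ultrafilters — the negation of a sentence preserved by $\F$-factors is preserved by $\F$-products. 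For clause~(4) there is one extra ingredient: from the hypothesis that $\Mod(\chi)$ preserves $\F$-products, Corollary~\ref{chi-pres-cor} gives that $\Mod^\sharp(\chi)$ preserves $\F$-products in $\I^\sharp$; combined with Proposition~\ref{pres-sharp-prop} applied to $\rho$ and with the closure of $\F$ under reductions, the existential-quantifier preservation lemma for ordinary institutions from \cite{upins,iimt} yields that $(\exists\chi)\rho$ is preserved by $\F$-products in $\I^\sharp$, and Fact~\ref{logic-sharp-fact} together with Proposition~\ref{pres-sharp-prop} carries this back to $\I$.

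The routine parts are genuinely routine, so the only place that needs care is clause~(4): one must line up three inputs at once — the reduct-level transfer (Corollary~\ref{chi-pres-cor}), the sentence-level transfer (Proposition~\ref{pres-sharp-prop}), and the hypothesis that $\F$ is closed under reductions, which the ordinary-institution existential lemma consumes internally — and one must check that the description of $(\exists\chi)$ in $\I^\sharp$ obtained from Fact~\ref{logic-sharp-fact} is exactly the one that lemma expects (quantification along $\Mod^\sharp(\chi)$, with no auxiliary frame or nominals structure entering). That bookkeeping is the main, though modest, obstacle; everything else is a mechanical instance of the $\I \leftrightarrow \I^\sharp$ correspondence.
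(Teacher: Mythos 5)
Your proposal is correct and follows essentially the same route as the paper's own proof: transfer via Proposition~\ref{pres-sharp-prop} and Fact~\ref{logic-sharp-fact} to $\I^\sharp$, invoke the ordinary-institution preservation results of \cite{upins,iimt} for clauses (1)--(3), and for clause (4) additionally use Corollary~\ref{chi-pres-cor} before transferring back. No gaps; the bookkeeping you flag for the existential case is exactly what the paper does.
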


\begin{proof}
1., 2., 3. 
By Fact \ref{logic-sharp-fact}, the conjunction and negation coincide
in $\I$ and $\I^\sharp$.
By Prop.~\ref{pres-sharp-prop}, preservation by $\F$-products/factors
also coincides in $\I$ and $\I^\sharp$. 
The conclusions for 1., 2., 3. follow because by \cite{upins,iimt} the
considered preservation properties hold in general in any ordinary
institution and in particular in $\I^\sharp$.  

4. By Prop.~\ref{pres-sharp-prop} $\rho$ is preserved by $\F$-products
in $\I^\sharp$. 
By Cor.~\ref{chi-pres-cor} it follows that $\Mod^\sharp (\chi)$
preserves $\F$-products.
From \cite{upins,iimt} we know that in general, in any (ordinary)
institution, from such conditions it follows that $(\exists\chi) \rho$ is
preserved by $\F$-products.  
We apply this conclusion within $\I^\sharp$. 
By Fact \ref{logic-sharp-fact} (existential quantification coincide in
$\I$ and in $\I^\sharp$) and by Prop.~\ref{pres-sharp-prop} it now
follows that $(\exists\chi) \rho$ is preserved by $\F$-products in
$\I$.  
\end{proof}

The conclusions of Cor.~\ref{conn-cor} may be obtained directly
without reliance upon Prop.~\ref{pres-sharp-prop}. 
Some of them may be obtained under the slightly milder condition that
does not require the $F$-products to be concrete, however this
generality is largely meaningless in the applications because the
$F$-products are usually concrete (in fact we do not know examples of
$F$-products that are not concrete). 

\begin{proposition}\label{exists-pres-prop}
In any  stratified institution $\I$ with $\F$-products, if
$\F$ is closed under reductions, $\Mod(\chi)$ invents $\F$-products,
and $\rho$ is preserved by $\F$-factors then $(\exists \chi)\rho$
is preserved by $\F$-factors.  
\end{proposition}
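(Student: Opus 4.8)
The plan is to mirror the classical argument that existentially quantified sentences are preserved under factors, the one new device being the inventing hypothesis: it is what supplies, on (most of) the factors, the $\chi$-expansions witnessing the existential quantification on the filtered product. Concretely, I would fix a filter $F\in\F$ over a set $I$, a family $(M_i)_{i\in I}$ of $\Sigma$-models, an $F$-product $\{\mu_J\co M_J\ra M_F\mid J\in F\}$ with associated product projections $p_{J,i}$, and a state $w\in\sem{M_F}_\Sigma$ with $M_F\models^w_\Sigma (\exists\chi)\rho$; by the defining clause for $(\exists\chi)\rho$ (Dfn.~\ref{int-logic-dfn}) this gives a $\chi$-expansion $B$ of $M_F$, so $\Mod(\chi)(B)=M_F$, together with a state $w'\in\sem{B}_{\Sigma'}$ such that $\sem{B}_\chi(w')=w$ and $B\models^{w'}_{\Sigma'}\rho$. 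The target is then to exhibit some $J'\in F$ and a state on $M_{J'}$ lying over $w$ whose $j$-th projection satisfies $(\exists\chi)\rho$ in $M_j$ for every $j\in J'$.

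First I would feed the $F$-product $\{\mu_J\}$ and the object $B$ (with $\Mod(\chi)(B)=M_F$) into the hypothesis that $\Mod(\chi)$ invents $\F$-products. This returns a set $J\in F$, $\Sigma'$-models $(M'_j)_{j\in J}$ with $\Mod(\chi)(M'_j)=M_j$, and an $F|_J$-product $\{\mu'_{J'}\co M'_{J'}\ra B\mid J'\in F|_J\}$ of $(M'_j)_{j\in J}$ (with projections $p'_{J',j}$) such that $\Mod(\chi)(\mu'_{J'})=\mu_{J'}$; matching domains forces $\Mod(\chi)(M'_{J'})=M_{J'}$. Closure of $\F$ under reductions gives $F|_J\in\F$, so I can then apply the hypothesis that $\rho$ is preserved by $\F$-factors to this $F|_J$-product and the state $w'$ with $B\models^{w'}_{\Sigma'}\rho$, obtaining $J'\in F|_J\subseteq F$ and $k'\in\sem{\mu'_{J'}}_{\Sigma'}^{-1}(w')$ with $M'_j\models^{k'_j}_{\Sigma'}\rho$ for all $j\in J'$, where $k'_j=\sem{p'_{J',j}}_{\Sigma'}(k')$. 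Finally I would set $k:=\sem{M'_{J'}}_\chi(k')\in\sem{M_{J'}}_\Sigma$: chasing the naturality square (\ref{diag1}) for the homomorphism $\mu'_{J'}\co M'_{J'}\ra B$ along $\chi$ yields $\sem{\mu_{J'}}_\Sigma(k)=\sem{B}_\chi(w')=w$, and chasing it for $p'_{J',j}$ along $\chi$ yields $\sem{M'_j}_\chi(k'_j)=\sem{\Mod(\chi)(p'_{J',j})}_\Sigma(k)$. Since $M'_j$ is a $\chi$-expansion of $M_j$ and $M'_j\models^{k'_j}_{\Sigma'}\rho$, the clause for $(\exists\chi)\rho$ gives $M_j\models^{\sem{M'_j}_\chi(k'_j)}_\Sigma(\exists\chi)\rho$, so $J'$ and $k$ are the required witnesses.

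The step I expect to require the most care is reconciling $\sem{\Mod(\chi)(p'_{J',j})}_\Sigma(k)$ with $\sem{p_{J',j}}_\Sigma(k)$, i.e. making sure that the product projections of the $F|_J$-product handed back by the inventing property are carried by $\Mod(\chi)$ to the projections of the given $F$-product restricted to $F|_J$. On the nose this holds in all the concrete examples; in the abstract setting one can secure it by exploiting that $F|_J$ is cofinal in $F$, so that --- by uniqueness of filtered products up to isomorphism and the standing Assumption that satisfaction is invariant under model isomorphisms --- one may replace the $F|_J$-part of $\{\mu_J\}$ by the $\Mod(\chi)$-image of $\{\mu'_{J'}\}$ without affecting $M_F$ or the statement being proved. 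Beyond that point the argument is routine diagram chasing with the compositionality and naturality of $\sem{\_}$ (including (\ref{diag1})) and with the definitions of external existential quantification and of preservation by $\F$-factors.
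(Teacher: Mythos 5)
Your argument is correct and follows essentially the same route as the paper's proof: unfold the existential witness $(B,w')$ from $M_F\models^w(\exists\chi)\rho$, invoke the inventing condition to get an $F|_J$-product of $\chi$-expansions over $B$, use closure of $\F$ under reductions to apply preservation of $\rho$ by $\F$-factors and obtain $(J',k')$, then transport $k'$ down via $k=\sem{M'_{J'}}_\chi(k')$ and the naturality square (\ref{diag1}) to conclude $\sem{\mu_{J'}}_\Sigma(k)=w$ and $M_j\models^{k_j}(\exists\chi)\rho$. The projection-compatibility point you flag is handled in the paper simply by reading it into the inventing condition (the proof there takes $\Mod(\chi)(p'_{J'\supseteq J''})=p_{J'\supseteq J''}$ as part of what inventing supplies), so your cofinality/isomorphism-invariance patch, while reasonable, is not needed.
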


\begin{proof}
Let $\chi \co \Sigma \ra\Sigma'$ signature morphism, let $F\in \F$,
and let $\{ \mu_J \co M_J \ra M_F \mid J \in F \}$ be an $F$-product
of a family $(M_i)_{i\in I}$ of $\Sigma$-models.
Assume that $M_F \models^w (\exists \chi)\rho$. 

It follows that there exists $M'$ and $w'$ such that 
$M_F = \Mod(\chi)(M')$, 
$w' \in \sem{M'}_\chi^{-1}(w)$, and $M' \models^{w'} \rho$. 
By the inventing condition there exists $J\in F$ and an $F|_J$-product 
$\{ \mu'_{J'} \co M'_{J'} \ra M' \mid J' \in F|_J \}$ of a family
$(M'_j)_{j\in J}$ of $\Sigma'$-models such that $\Mod(\chi)(M'_j) =
M_j$ for each $j\in J$ and for all $J'' \subseteq
J' \in F|_J$ we have that $\Mod(\chi)(p'_{J' \supseteq J''}) = p_{J'
  \supseteq J''}$ and $\Mod(\chi)(\mu'_{J'}) = \mu_{J'}$.
Since $\rho$ is preserved by $\F$-factors there exists $J'\in F|_J$
and $k'\in \sem{\mu'_{J'}}^{-1}_{\Sigma'} (w')$ such that $M'_j
\models^{k'_j} \rho$ for each $j\in J'$. 
Let $k =\sem{M'_{J'}}_\chi (k')$. 
For each $j\in J'$ we have the following:
$$\begin{array}{rll}
k_j = & \sem{\Mod(\chi)(p_{J',j})}(k)
      & (\text{by the definition of }k_j) \\
= & \sem{\Mod(\chi)(p_{J',j})}(\sem{M'_{J'}}_\chi (k'))
  & (\text{by the definition of }k) \\
= & \sem{M'_j}_\chi(\sem{p_{J',j}}(k')) 
  & (\text{by (\ref{diag1})}) \\
= & \sem{M'_j}_\chi (k'_j) 
  & (\text{by the definition of }k'_j).
\end{array}$$
Since $\Mod(\chi)(M'_j) = M_j$ we get that 
$M_j \models^{k_j} (\exists \chi)\rho$.
It remains to show that $\sem{\mu_{J'}}_\Sigma (k) = w$, which holds
by the following calculation:
$$\begin{array}{rll}
\sem{\mu_{J'}}_\Sigma (k) = 
  & \sem{\mu_{J'}}_\Sigma (\sem{M'_{J'}}_\chi (k')) 
  & (\text{by the definition of )k})\\
= & \sem{M'}_\chi \big( \sem{\mu'_{J'}}_{\Sigma'} (k') \big) 
  & (\text{by (\ref{diag1})})\\
= & \sem{M'}_\chi (w') 
  & (\text{since }k'\in \sem{\mu'_{J'}}^{-1}_{\Sigma'} (w'))\\
= & w. &
\end{array}$$
\end{proof}

\begin{proposition}\label{modal-prop}
Let $\I$ be a stratified institution endowed with a frame extraction
$L\co \Sign^{\I} \ra \Sign^{\REL}$, $\Fr \co \Mod^\I \Ra
L;\Mod^{\REL}$. 
Assume that $\I$ has $F$-products for a filter $F$ over a set $I$.  
\begin{enumerate}

\item If $\Fr_\Sigma$ preserves direct products and
  $\rho_1,\dots,\rho_n$ are preserved by $F$-products then any  
$\lambda$-possibility $\langle\lambda\rangle(\rho_1,\dots\rho_n)$ is
also preserved by $F$-products.  

\item If $\Fr_\Sigma$ preserves $F$-products and
  $\rho_1,\dots,\rho_n$ are preserved by $F$-factors then any
  $\lambda$-possibility $\langle\lambda \rangle (\rho_1,\dots\rho_n)$
  is also preserved by $F$-factors.  

\end{enumerate}
\end{proposition}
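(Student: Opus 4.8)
The plan is to argue directly from the definition of $\lambda$-possibility (Dfn.~\ref{mod-strat-institution-dfn}) and of preservation by $F$-products/factors (Dfn.~\ref{sen-pres-dfn}), reducing the whole argument to the behaviour of $F$-products of relational structures. The point is that, for a fixed signature $\Sigma$, the frame extraction identifies $\sem{M}_\Sigma$ with the carrier of the $\REL$-model $\Fr_\Sigma(M)$ and the set $(\Fr_\Sigma(M))_\lambda$ occurring in Dfn.~\ref{mod-strat-institution-dfn} with the $\lambda$-interpretation of that model, while each $\Fr_\Sigma(h)$ is a $\REL$-homomorphism and hence preserves the relation $\lambda$ in the usual one-directional sense. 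I will freely use the concrete descriptions of direct products and of filtered colimits of relational structures: in a direct product carrier and relations are componentwise, and in a filtered colimit the relation $\lambda$ is the directed union of the images of the relations at the finite stages. I keep the notation $\{\mu_J\co M_J\ra M_F\mid J\in F\}$, $p_{J,j}$, $k_j=\sem{p_{J,j}}_\Sigma(k)$ of Dfn.~\ref{cat-filt-prod-dfn} and Dfn.~\ref{sen-pres-dfn}.

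For item~1 I would take $w\in\sem{M_F}$ together with $J\in F$ and $k\in\sem{\mu_J}^{-1}(w)$ witnessing the hypothesis, so that for each $j\in J$ there are states $w^j_1,\dots,w^j_n$ with $(k_j,w^j_1,\dots,w^j_n)\in(\Fr_\Sigma(M_j))_\lambda$ and $M_j\models^{w^j_l}\rho_l$. Since $\Fr_\Sigma$ preserves direct products and the carrier functor $\Mod^\REL(L(\Sigma))\ra\Set$ does too, the composite $\sem{\_}_\Sigma$ preserves direct products; hence for each $l$ there is a unique $v^J_l\in\sem{M_J}$ with $\sem{p_{J,j}}(v^J_l)=w^j_l$ for all $j\in J$, and, the relation in $\Fr_\Sigma(M_J)\cong\prod_{j\in J}\Fr_\Sigma(M_j)$ being componentwise with $j$-th component $(k_j,w^j_1,\dots,w^j_n)$, we obtain $(k,v^J_1,\dots,v^J_n)\in(\Fr_\Sigma(M_J))_\lambda$. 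Applying the hypothesis ``$\rho_l$ preserved by $F$-products'' with the witness $J$ and $v^J_l$ yields $M_F\models^{\sem{\mu_J}(v^J_l)}\rho_l$, and since $\Fr_\Sigma(\mu_J)$ is a $\REL$-homomorphism it sends $(k,v^J_1,\dots,v^J_n)$ to $(w,\sem{\mu_J}(v^J_1),\dots,\sem{\mu_J}(v^J_n))\in(\Fr_\Sigma(M_F))_\lambda$. By Dfn.~\ref{mod-strat-institution-dfn} these two facts together give $M_F\models^w\langle\lambda\rangle(\rho_1,\dots,\rho_n)$. This direction uses only preservation of direct products, the passage to $M_F$ being carried entirely by $\Fr_\Sigma(\mu_J)$ and by preservation of the $\rho_l$.

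For item~2 I would start from $M_F\models^w\langle\lambda\rangle(\rho_1,\dots,\rho_n)$, i.e. from states $v_1,\dots,v_n\in\sem{M_F}$ with $(w,v_1,\dots,v_n)\in(\Fr_\Sigma(M_F))_\lambda$ and $M_F\models^{v_l}\rho_l$. Because $\Fr_\Sigma$ preserves $F$-products, $\Fr_\Sigma(M_F)$ is the filtered colimit of the $\Fr_\Sigma(M_J)$ along the canonical projections, so its relation $\lambda$ is the directed union of the images of the $(\Fr_\Sigma(M_J))_\lambda$; hence there are $J_0\in F$ and $(y_0,y_1,\dots,y_n)\in(\Fr_\Sigma(M_{J_0}))_\lambda$ with $\sem{\mu_{J_0}}(y_i)$ equal to the corresponding coordinate of $(w,v_1,\dots,v_n)$. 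Writing $(y_i)_j=\sem{p_{J_0,j}}(y_i)$ and using that each $\Fr_\Sigma(p_{J_0,j})$ is a $\REL$-homomorphism, $((y_0)_j,\dots,(y_n)_j)\in(\Fr_\Sigma(M_j))_\lambda$ for all $j\in J_0$. Next I would invoke ``$\rho_l$ preserved by $F$-factors'' on each $v_l$ and reconcile the preimage it produces with $y_l$ by passing to a later stage --- two elements of a stage with the same image in the colimit $\sem{M_F}$ agree after restriction to some smaller member of $F$ --- and then intersect these finitely many members of $F$, which stays in $F$, obtaining a single $J\subseteq J_0$ in $F$ with $M_j\models^{(y_l)_j}\rho_l$ for every $j\in J$ and every $l$. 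By Dfn.~\ref{mod-strat-institution-dfn} this gives $M_j\models^{(y_0)_j}\langle\lambda\rangle(\rho_1,\dots,\rho_n)$ for all $j\in J$, and taking $k=\sem{p_{J_0\supseteq J}}(y_0)$ we have $\sem{\mu_J}(k)=w$ and $k_j=(y_0)_j$, which is exactly the witness required by Dfn.~\ref{sen-pres-dfn} for $\langle\lambda\rangle(\rho_1,\dots,\rho_n)$ to be preserved by $F$-factors.

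The bookkeeping with the projections of the $F$-product diagram and the standard descriptions of limits and filtered colimits of relational structures are routine. The genuinely delicate step, and the only place where preservation of $F$-products by $\Fr_\Sigma$ (rather than mere preservation of direct products) is really used, is in item~2: bringing the relational witness $(w,v_1,\dots,v_n)$ and the factor-witnesses supplied by the $\rho_l$ down to a \emph{common} finite stage $M_J$, which rests on closure of $F$ under finite intersections together with the ``eventually in the relation'' and ``eventually equal'' behaviour of filtered colimits of $\REL$-models.
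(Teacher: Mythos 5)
Your proof is correct and follows essentially the same route as the paper: in item~1 you assemble the componentwise witnesses at the product stage $M_J$ using preservation of direct products and push the resulting tuple forward along the $\REL$-homomorphism $\Fr_\Sigma(\mu_J)$, and in item~2 you pull the relational witness in $\Fr_\Sigma(M_F)$ back to a stage of the directed colimit, reconcile it with the witnesses supplied by $F$-factor preservation of the $\rho_i$ at a common member of $F$ (closure under finite intersections plus the eventual-equality property of directed colimits), exactly as in the paper's argument.
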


\begin{proof}
1. We consider an $F$-product 
$\{ \mu_J \co M_J \ra M_F \mid J \in F \}$ for a family $(M_i)_{i\in
  I}$ of $\Sigma$-models and assume that there exists $J\in F$ and
$k\in \sem{\mu_J}^{-1}(w)$ such that for each $j\in J$, 
$M_j \models^{k_j} \langle\lambda\rangle(\rho_1,\dots\rho_n)$. 
We have to prove that 
$M_F \models^w \langle\lambda\rangle(\rho_1,\dots,\rho_n)$, i.e. that
there exists $(w,w_1,\dots,w_n) \in (\Fr_\Sigma (M))_\lambda$ such
that $M_F \models^{w_i} \rho_i$ for each $1\leq i\leq n$. 

For each $j\in J$, 
$M_j \models^{k_j} \langle\lambda\rangle(\rho_1,\dots\rho_n)$
means that there exists 
$(k_j,k_j^1,\dots,k_j^n)\in (\Fr_\Sigma (M_j))_\lambda$ such that 
$M_j \models^{k_j^i} \rho_i$ for each $1\leq i\leq n$. 
Since $\Fr_\Sigma$ preserves products we have that 
$\{ \Fr_\Sigma (p_{J,j}) \co \Fr_\Sigma(M_J) \ra \Fr_\Sigma(M_j) \mid
j\in J \}$ is direct product in $\Mod^\REL (L(\Sigma))$. 
Hence for each $1\leq i\leq n$, there exists $k^i\in \sem{M_J}$ such
that $\sem{p_{J,j}}(k^i) = k^i_j$ for each $j\in J$. 
We define $w_i = \sem{\mu_J}(k^i)$. 

By the direct product property of $\Fr_\Sigma(M_J)$ in
$\Mod^\REL (L(\Sigma))$ we have that 
$(k_j,k_j^1,\dots,k_j^n)\in (\Fr_\Sigma(M_j))_\lambda$ for each
$j\in J$ implies that $(k,k^1,\dots,k^n)\in
(\Fr_\Sigma(M_J))_\lambda$.  
Since $\Fr_\Sigma(\mu_J)$ is a homomorphism of
$\Mod^\REL (L(\Sigma))$-models it follows that 
$(w,w_1,\dots,w_n)= 
(\sem{\mu_J}(k),\sem{\mu_J}(k^1),\dots,\sem{\mu_J}(k^n))
\in (\Fr_\Sigma(M_F))_\lambda$. 

That for each $1 \leq i\leq n$, $M_F \models^{w_i} \rho_i$, follows
from the hypothesis that $\rho_i$ is preserved by $F$-products and
because $k^i \in \sem{\mu_J}^{-1}(w_i)$ and $M_j \models^{k^i_j}
\rho_i$ for each $j\in J$.  

2. We consider an $F$-product 
$\{ \mu_J \co M_J \ra M_F \mid J \in F \}$ for a family $(M_i)_{i\in
  I}$ of $\Sigma$-models and assume that 
$M_F \models^w \langle\lambda\rangle (\rho_1,\dots,\rho_n)$.
We have to prove that there exists $J\in F$ and
$k\in \sem{\mu_J}^{-1}(w)$ such that for each $j\in J$, 
$M_j \models^{k_j} \langle\lambda\rangle (\rho_1,\dots,\rho_n)$,
i.e. that there exists $(k_j,k_j^1,\dots,k_j^n)\in
(\Fr_\Sigma(M_j))_\lambda$ such that for each $1\leq i\leq n$, 
$M_j \models^{k_j^i} \rho_i$. 

From $M_F \models^w \langle\lambda\rangle (\rho_1,\dots,\rho_n)$
it follows that there exists $(w,w_1,\dots,w_n)\in
(\Fr_\Sigma(M_F))_\lambda$ such that $M_F \models^{w_i} \rho_i$ for each
$1\leq i\leq n$. 
By the hypothesis that each $\rho_i$ is preserved by $F$-factors, this 
means there exists $J_i\in F$ and  $l^i \in \sem{\mu_{J_i}}^{-1}
(w_i)$ such that $M_j \models^{l_j^i} \rho_i$ for each $j\in J_i$.  

Since $\Fr_\Sigma$ preserves $F$-products it follows that 
$\{ \Fr_\Sigma(\mu_J) \co \Fr_\Sigma(M_J) \ra \Fr_\Sigma(M_F) \mid
J \in F \}$ is an $F$-product of $(\Fr_\Sigma(M_j))_{j\in I}$ in
$\Mod^\REL (L(\Sigma))$. 
Hence, $(w,w_1,\dots,w_n)\in (\Fr_\Sigma(M_F))_\lambda$ implies that
there exists $J'\in F$ and 
$(v,v_1,\dots,v_n)\in (\Fr_\Sigma(M_{J'}))_\lambda$ with
$\sem{\mu_{J'}}(v) = w$ and $\sem{\mu_{J'}}(v_i) = w_i$ for each
$1\leq i \leq n$.

Let us take $J'' = J' \cap \bigcap_{1\leq i \leq n} J_i$. 
Since filters are closed under intersections, it follows that 
$J'' \in F$. 
For each $1\leq i\leq n$ we have that 
\[
\sem{\mu_{J''}}(\sem{p_{J_i \supseteq J''}}(l^i)) = 
\sem{\mu_{J_i}}(l^i) = w_i = \sem{\mu_{J'}}(v_i) =
\sem{\mu_{J''}}(p_{J'\supseteq J''}(v_i)).
\]
Since $\{ \Fr_\Sigma(\mu_J) \co \Fr_\Sigma(M_J) \ra \Fr_\Sigma(M_F) \mid
J \in F \}$ is an $F$-product, which means it is a particular directed
co-limit, it follows that there exists $J \subseteq J''$ such that  
$\sem{p_{J_i \supseteq J}}(l^i) = \sem{p_{J'\supseteq J}}(v_i)$ for
each $1\leq i \leq n$. 

For each $1\leq i\leq n$ we define 
$k^i = \sem{p_{J_i\supseteq J}}(l^i)= \sem{p_{J'\supseteq J}}(v_i)$. 
We also let $k = \sem{p_{J' \supseteq J}}(v)$. 
\begin{itemize}\smallitems

\item[--] Since $(v,v_1,\dots,v_n)\in (\Fr_\Sigma(M_{J'}))_\lambda$,
  by the homomorphism property of $\Fr_\Sigma (p_{J'\supseteq J})$ it follows
  that $(k,k^1,\dots,k^n) \in (\Fr_\Sigma(M_J))_\lambda$. 
  By the homomorphism property of each $p_{J,j}$ it further follows
  that $(k_j,k_j^1,\dots,k_j^n) \in (\Fr_\Sigma(M_j))_\lambda$ for
  each $j\in J$. 

\item[--] Note that for each $1\leq i\leq n$ and each $j\in J$ 
\[
l^i_j = \sem{p_{J_i,j}}(l^i) = \sem{p_{J,j}}(\sem{p_{J_i \supseteq J}}(l^i))
= \sem{p_{J,j}}(k^i) = k^i_j.
\]
Since we know that $M_j \models^{l^i_j} \rho_i$ it means that $M_j
\models^{k^i_j} \rho_i$ for each $j\in J$. 

\end{itemize}
\end{proof}

\begin{proposition}\label{hybrid-prop}
Let $\I$ be a stratified institution endowed with a nominals
extraction $N \co \Sign^\I \ra \Sign^\SETC$, $\Nm \co \Mod^\I \Ra
N;\Mod^\SETC$. 
Assume that $\I$ has $F$-products for a filter $F$ over a set $I$. 
For any signature $\Sigma$ and any $i\in N(\Sigma)$, 
\begin{enumerate}

\item If $\Nm_\Sigma$ preserves direct products then $\sen{i}$ is
  preserved by $F$-products. 

\item If $\Nm_\Sigma$ preserves $F$-products then $\sen{i}$ is
  preserved by $F$-factors. 

\item If $\rho$ is preserved by $F$-products then each sentence 
$@_i \rho$ is preserved by $F$-products too. 

\item If $\Nm_\Sigma$ preserves $F$-products and $\rho$ is
  preserved by $F$-factors then each sentence $@_i \rho$ is preserved
  by $F$-products too.     

\end{enumerate}
\end{proposition}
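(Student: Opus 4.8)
The plan is to mirror the proof of item~3, reducing everything to the local institution $\I^\sharp$ and exploiting that, once $\Nm_\Sigma$ preserves $F$-products, the interpretation of $i$ in an $F$-product is pinned to the ``diagonal'' of the nominal worlds of the factors. I keep the notation of Prop.~\ref{filt-prod-sharp-prop}: fix $F\in\F$ over $I$, a family $(M_j)_{j\in I}$ of $\Sigma$-models, and an $F$-product $\{\mu_J\co M_J\ra M_F\mid J\in F\}$ in $\Mod(\Sigma)$. Write $n_j=(\Nm_\Sigma(M_j))_i$, $N_J=(\Nm_\Sigma(M_J))_i$ and $N_F=(\Nm_\Sigma(M_F))_i$. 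Since $p_{J,j}$ and $\mu_J$ are model homomorphisms, $\Nm_\Sigma(p_{J,j})$ and $\Nm_\Sigma(\mu_J)$ are $\SETC$-homomorphisms and hence preserve the constant $i$; this yields $\sem{p_{J,j}}(N_J)=n_j$ and $\sem{\mu_J}(N_J)=N_F$, so in particular $N_J\in\sem{\mu_J}^{-1}(N_F)$ with $(N_J)_j=n_j$. Moreover, because $\Nm_\Sigma$ preserves $F$-products and $\sem{\_}_\Sigma=\Nm_\Sigma;U$ with $U$ the $\SETC$-forgetful functor (which preserves $F$-products), $\{\sem{\mu_J}\}$ is the reduced-product structure in $\Set$, so $N_F$ is genuinely the class of the diagonal $(n_j)_{j\in I}$.

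First I would unfold both preservation statements through Dfn.~\ref{hybrid-strat-institution-dfn}. As $(M\models^w @_i\rho)=(M\models^{(\Nm_\Sigma(M))_i}\rho)$ is independent of $w$, the hypothesis of ``$@_i\rho$ preserved by $F$-products'' at $w\in\sem{M_F}$ says there are $J\in F$ and $k\in\sem{\mu_J}^{-1}(w)$ with $M_j\models^{n_j}\rho$ for all $j\in J$, and its conclusion says $M_F\models^{N_F}\rho$. Thus item~4 amounts exactly to the implication
\[
\big(\exists J\in F\ \forall j\in J:\ M_j\models^{n_j}\rho\big)\ \Longrightarrow\ M_F\models^{N_F}\rho .
\]
Given such a $J$, I would rewrite $M_j\models^{n_j}\rho$ as $M_j\models^{(N_J)_j}\rho$, i.e.\ the factors satisfy $\rho$ at the components of the lift $N_J\in\sem{\mu_J}^{-1}(N_F)$. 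Passing to $\I^\sharp$ by Prop.~\ref{pres-sharp-prop} and using Prop.~\ref{filt-prod-sharp-prop}, $(M_F,N_F)$ is the $F$-product in $\Mod^\sharp(\Sigma)$ of the family $((M_j,n_j))_{j\in I}$, and the data above give $(M_j,n_j)\models^\sharp\rho$ on the $F$-large set $J$. It then remains to conclude $(M_F,N_F)\models^\sharp\rho$, that is $M_F\models^{N_F}\rho$, which is the desired $M_F\models^w@_i\rho$.

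The main obstacle is precisely this last transfer. Producing satisfaction of $\rho$ at the product's nominal world $N_F$ out of satisfaction at the factors' nominal worlds $n_j$ is a \emph{product}-direction step for $\rho$, whereas the raw hypothesis supplies preservation of $\rho$ by $F$-\emph{factors}, which runs the opposite way. The leverage that must close the gap is the combination of $\Nm_\Sigma$ preserving $F$-products (equivalently item~1, whereby $\sen{i}$ is preserved by $F$-products), forcing $N_F$ to be the reduced-product class of the diagonal $(n_j)_{j\in I}$, together with the coherent patching of the witnesses $n_j$ into the single lift $N_J$ over $N_F$ along the reduced-product colimit. The delicate bookkeeping is therefore to verify, inside that colimit, that the factor data at the nominal worlds survive the upward passage; the stress case of a disjunction $\rho_1\vee\rho_2$ is exactly where this upward transfer is the genuine difficulty, so the argument must be pinned to the product-direction behaviour of $\rho$ at the nominal section secured by $\Nm_\Sigma$ preserving $F$-products, rather than to the factor hypothesis alone.
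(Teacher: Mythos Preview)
Your diagnosis of the obstacle is exactly right, and your attempted workaround in the last paragraph does not close the gap: the hypothesis that $\Nm_\Sigma$ preserves $F$-products only pins down \emph{where} the nominal points sit (forcing $N_F$ to be the colimit image of the diagonal $N_J$), it says nothing about how satisfaction of an arbitrary $\rho$ transfers upward. The implication you isolated,
\[
\big(\exists J\in F\ \forall j\in J:\ M_j\models^{n_j}\rho\big)\ \Longrightarrow\ M_F\models^{N_F}\rho,
\]
is literally preservation of $\rho$ by $F$-products (at the nominal section), and there is no way to extract it from preservation of $\rho$ by $F$-factors alone. So the statement, read literally, is not provable from the stated hypotheses.

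What is really going on is a typo in the statement: the conclusion of item~4 should read ``preserved by $F$-\emph{factors}'', so that items~3 and~4 together cover both directions for $@_i\rho$, in parallel with items~1--2 for $\sen{i}$ and with the two items of Prop.~\ref{modal-prop} for modalities. The paper's own proof confirms this: it assumes $M_F\models^w@_i\rho$, i.e.\ $M_F\models^{N_F}\rho$, and produces $J\in F$ with $M_j\models^{n_j}\rho$ for all $j\in J$. Concretely, preservation of $\rho$ by $F$-factors yields some $J''\in F$ and $k''\in\sem{\mu_{J''}}^{-1}(N_F)$ with $M_j\models^{k''_j}\rho$ on $J''$; since $\Nm_\Sigma$ preserves $F$-products, $\{\sem{\mu_J}\}$ is a directed colimit in $\Set$, so $k''$ and the nominal lift $N_{J'}=(\Nm_\Sigma(M_{J'}))_i$ (both mapping to $N_F$) must agree after restriction to some smaller $J\in F$, giving $k''_j=n_j$ on $J$ and hence $M_j\models^{n_j}\rho$ there.

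So your unfolding and your identification of the mismatch are correct; the resolution is to fix the statement, not to find a hidden argument.
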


\begin{proof}
We consider $\{ \mu_J \co M_J \ra M_F \mid J \in F \}$
an $F$-product a family $(M_j)_{j\in I}$ in $\Mod(\Sigma)$. 

1. Let us assume that there exists $J\in F$ and $k\in
\sem{\mu_J}^{-1}_\Sigma (w)$ such that $M_j \models^{k_j} \sen{i}$ for
each $j\in J$.  
This means for each $j\in J$
\begin{equation}\label{nom-eq1}
(\Nm_\Sigma (M_j))_i = k_j = \Nm_\Sigma (p_{J,j}) (k).
\end{equation} 
Also, by the homomorphism property of $\Nm_\Sigma (p_{J,j})$ we have
that for  each $j\in J$
\begin{equation}\label{nom-eq2}
(\Nm_\Sigma (M_j))_i = \Nm_\Sigma (p_{J,j}) ((\Nm_\Sigma (M_J))_i).
\end{equation}
Since $\Nm_\Sigma$ preserves direct products, 
from (\ref{nom-eq1}) and (\ref{nom-eq2}) it follows that 
$(\Nm_\Sigma (M_J))_i = k$. 
We have that 
$$\begin{array}{rll}
(\Nm_\Sigma (M_F))_i = & 
  \Nm_\Sigma (\mu_J) ((\Nm_\Sigma (M_J))_i) &
  (\text{by the homomorphism property of }\Nm_\Sigma (\mu_J)) \\
 = & \Nm_\Sigma (\mu_J) (k) & = w,
\end{array}$$
which means $M_F \models^w \sen{i}$. 

2. Let us assume that $M_F \models^w \sen{i}$, which means 
$(\Nm_\Sigma (M_F))_i =w$. 
Since $\Nm_\Sigma$ preserves $F$-products, 
$\{ \Nm_\Sigma (\mu_J) \co \Nm_\Sigma (M_J) \ra \Nm_\Sigma (M_F) \mid
J \in F \}$ is a directed co-limit, hence there exists $J\in F$ such
that $\Nm_\Sigma (\mu_J) ((\Nm_\Sigma (M_J))_i) = (\Nm_\Sigma (M_F))_i$. 
Let $k = (\Nm_\Sigma (M_J))_i$. 
For each $j\in J$, by the homomorphism property of $\Nm_\Sigma
(p_{J,j})$ it follows that $k_j = \Nm_\Sigma (p_{J,j}) (k) =
\Nm_\Sigma (p_{J,j}) ((\Nm_\Sigma (M_J))_i) = (\Nm_\Sigma
(M_j))_i$ which means $M_j \models^{k_j} \sen{i}$.   

3. Let us assume that there exists $J\in F$ and $k\in
\sem{\mu_J}^{-1}_\Sigma (w)$ such that $M_j \models^{k_j} @_i \rho$
for each $j\in J$, which just means $M_j
\models^{(\Nm_\Sigma (M_j))_i} \rho$ for each $j\in J$.
Since by the homomorphism property of $\Nm_\Sigma (\mu_J)$ and of 
$\Nm_\Sigma (p_{J,j})$, for each $j\in J$, we have that
$\Nm_\Sigma (\mu_J) ((\Nm_\Sigma (M_J))_i) = (\Nm_\Sigma (M_F))_i$ and 
that $\Nm_\Sigma (p_{J,j}) ((\Nm_\Sigma (M_J))_i) =
(\Nm_\Sigma (M_j))_i$, respectively, and because by hypothesis $\rho$
is preserved by $F$-products it follows that 
$M_F \models^{(\Nm_\Sigma (M_F))_i} \rho$ which means $M_F \models^w
@_i \rho$. 

4. Let us assume $M_F \models^w @_i \rho$, which means 
$M_F \models^{(\Nm_\Sigma (M_F))_i} \rho$. 
It is enough to show that there exists $J\in F$ such that $M_j
\models^{(\Nm_\Sigma (M_j))_i} \rho$ for each $j\in J$. 
\begin{itemize}\smallitems

\item[--]
Since $\Nm_\Sigma$ preserves $F$-products, 
$\{ \Nm_\Sigma (\mu_J) \co \Nm_\Sigma (M_J) \ra \Nm_\Sigma (M_F) \mid
J \in F \}$ is a directed co-limit, hence there exists $J'\in F$ such
that $\Nm_\Sigma (\mu_{J'}) ((\Nm_\Sigma (M_{J'}))_i) = (\Nm_\Sigma
(M_F))_i$.  

\item[--]
By the hypothesis that $\rho$ is preserved by $F$-factors, it follows
that there exists $J''\in F$ and 
$k'' \in \sem{\mu_{J''}}^{-1}_\Sigma ((\Nm_\Sigma (M_F))_i)$ such that
$M_j \models^{k''_j} \rho$ for each $j\in J''$. 

\end{itemize}
Since $\sem{\mu_{J'}}_\Sigma ((\Nm_\Sigma (M_{J'}))_i) = 
\sem{\mu_{J''}}_\Sigma (k'')$ and because 
$\{ \sem{\mu_J}_\Sigma \co \sem{M_J}_\Sigma \ra \sem{M_F}_\Sigma \mid
J \in F \}$ is a directed co-limit, there exists $J \subseteq J'\cap
J'' \in F$ such that 
\begin{equation}\label{at-eq1}
\sem{p_{J'\supseteq J}}_\Sigma ((\Nm_\Sigma (M_{J'}))_i) = 
\sem{p_{J''\supseteq J}}_\Sigma (k'').
\end{equation}
For each $j\in J$ we have that 

\vspace{.5em}

\noindent
$(\Nm_\Sigma (M_j))_i =$
\vspace{-.5em}
$$\begin{array}{rrll}
& = &   \Nm_\Sigma (p_{J,j}) ((\Nm_\Sigma (M_{J}))_i) & 
  (\text{by the homomorphism property of }\Nm_\Sigma (p_{J,j})) \\
& = & \Nm_\Sigma (p_{J,j}) \big(\Nm_\Sigma (p_{J'\supseteq J})
       ((\Nm_\Sigma (M_{J'}))_i)\big) &
  (\text{by the homomorphism property of }\Nm_\Sigma (p_{J'\supseteq J}))\\
& = & \Nm_\Sigma (p_{J,j}) \big(\Nm_\Sigma (p_{J''\supseteq J}) 
       (k'')\big) & 
  (\text{by (\ref{at-eq1})}) \\
&  =  & \sem{p_{J'',j}}_\Sigma (k'') & = k''_j.
\end{array}$$
Hence for each $j\in J$, $M_j \models^{(\Nm_\Sigma (M_j))_i} \rho$.
\end{proof}

Note that from the six preservation results included in Prop.~\ref{modal-prop}
and \ref{hybrid-prop}, one does not assume anything on the
frame/nominals extraction, two assume that the respective extractions
preserve direct products, and three that the they preserve
$F$-products.  

The preservation results of Cor.~\ref{conn-cor} and of
Prop.~\ref{exists-pres-prop}--\ref{hybrid-prop} may be applied for
lifting preservation properties from simpler to more complex
sentences.
They can be used at the induction step when establishing preservation
properties by induction on the structure of the sentences. 
The following result and its corollary constitute a general approach
to the base case of such induction proofs, that in general corresponds
to the atomic sentences.  

\begin{lemma}\label{atom-lem}
Let $(\Phi,\alpha,\beta) \co \B' \ra \B$ be an institution morphism
such that each $\beta_\Sigma$ preserves $F$-products.
Then for any $\Phi(\Sigma)$-sentence $\rho$ that is preserved by
$F$-products/factors, the $\Sigma$-sentence $\alpha_\Sigma (\rho)$ is
preserved by $F$-products/factors. 
\end{lemma}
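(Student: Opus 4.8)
The plan is to transport everything along the satisfaction condition of the institution morphism, reducing the claim to the assumed preservation property of $\rho$ in $\B$. Fix a filter $F\in\F$ over a set $I$, a family $(M_i)_{i\in I}$ of $\Sigma$-models of $\B'$, and an $F$-product $\{\mu_J\co M_J\ra M_F\mid J\in F\}$ in $\Mod^{\B'}(\Sigma)$. Since $\beta_\Sigma\co \Mod^{\B'}(\Sigma)\ra \Mod^{\B}(\Phi(\Sigma))$ preserves $F$-products by hypothesis, $\{\beta_\Sigma(\mu_J)\co \beta_\Sigma(M_J)\ra \beta_\Sigma(M_F)\mid J\in F\}$ is an $F$-product of $(\beta_\Sigma(M_i))_{i\in I}$ in $\Mod^{\B}(\Phi(\Sigma))$; this is the one $F$-product of the image family against which the preservation property of $\rho$ will be invoked.

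For preservation by $F$-products I would argue as follows: assume there is $J\in F$ with $M_j\models^{\B'}\alpha_\Sigma(\rho)$ for each $j\in J$. By the satisfaction condition of $(\Phi,\alpha,\beta)$ this gives $\beta_\Sigma(M_j)\models^{\B}\rho$ for each $j\in J$; since $\rho$ is preserved by $F$-products in $\B$ and the $\beta_\Sigma(\mu_J)$ form an $F$-product of $(\beta_\Sigma(M_i))_{i\in I}$, we get $\beta_\Sigma(M_F)\models^{\B}\rho$, and one more application of the satisfaction condition yields $M_F\models^{\B'}\alpha_\Sigma(\rho)$. The case of preservation by $F$-factors is the mirror image: from $M_F\models^{\B'}\alpha_\Sigma(\rho)$ the satisfaction condition gives $\beta_\Sigma(M_F)\models^{\B}\rho$; preservation by $F$-factors of $\rho$ in $\B$ produces some $J\in F$ with $\beta_\Sigma(M_j)\models^{\B}\rho$ for all $j\in J$, and the satisfaction condition converts this back to $M_j\models^{\B'}\alpha_\Sigma(\rho)$ for all $j\in J$.

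There is no genuine obstacle here; the only points demanding care are bookkeeping ones. First, one must keep straight that $\rho$ is a $\Phi(\Sigma)$-sentence of $\B$ whereas $\alpha_\Sigma(\rho)$ is a $\Sigma$-sentence of $\B'$, so that the two directions of the morphism's satisfaction condition are applied at the correct signature. Second, the preservation hypothesis on $\rho$ must be used with the \emph{particular} $F$-product of $(\beta_\Sigma(M_i))_{i\in I}$ arising as the $\beta_\Sigma$-image of the chosen $F$-product in $\B'$; this is legitimate precisely because $\beta_\Sigma$ preserves $F$-products, and — should one instead wish to use an arbitrary representative — because preservation by $F$-products/factors is invariant under the isomorphisms between different $F$-product co-cones (satisfaction being preserved by model isomorphisms, which for ordinary institutions holds automatically). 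I would also note in passing that the statement tacitly assumes $\B'$ has $F$-products, which is exactly what makes ``$\alpha_\Sigma(\rho)$ is preserved by $F$-products/factors'' meaningful, and which is needed to have the co-cone $\{\mu_J\}$ to begin with.
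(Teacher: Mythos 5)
Your proposal is correct and follows essentially the same route as the paper's own proof: push the chosen $F$-product through $\beta_\Sigma$ (legitimate since $\beta_\Sigma$ preserves $F$-products), transfer satisfaction back and forth along the satisfaction condition of $(\Phi,\alpha,\beta)$, and invoke the preservation hypothesis on $\rho$ in $\B$, with the factor case as the mirror image. The additional bookkeeping remarks (which signature each satisfaction condition is applied at, and using the image $F$-product as the representative) are sound but not needed beyond what the paper already records.
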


\begin{proof}
Let us assume an $F$-product 
$\{ \mu'_J \co M'_J \ra M'_F \mid J\in F \}$ of a family 
$(M'_i)_{i\in I}$ of $\Sigma$-models for a $\B'$-signature $\Sigma$. 
By hypothesis we have that 
$\{ \beta_\Sigma (\mu'_J) \co \beta_\Sigma (M'_J) \ra \beta_\Sigma
(M'_F) \mid J \in F \}$ is an $F$-product of 
$(\beta_\Sigma (M'_i))_{i\in I}$ in $\Mod^{\B}(\Phi(\Sigma))$. 

For the preservation by $F$-products, let us assume $J\in F$ such that
$M'_i \models_\Sigma \alpha_\Sigma (\rho)$ for each $i\in J$. 
By the satisfaction condition of $(\Phi,\alpha,\beta)$ this means 
$\beta_\Sigma (M'_i) \models_{\Phi(\Sigma)} \rho$ for each $i\in J$,
hence because $\rho$ is preserved by $F$-products, 
$\beta_\Sigma (M'_F) \models_{\Phi(\Sigma)} \rho$.  
By the satisfaction condition of $(\Phi,\alpha,\beta)$ it follows that
$M'_F \models_\Sigma \alpha_\Sigma (\rho)$. 

For the preservation by $F$-factors, let us assume that $M'_F
\models_\Sigma \alpha_\Sigma (\rho)$. 
By the satisfaction condition of $(\Phi,\alpha,\beta)$ it follows that
$\beta_\Sigma (M'_F) \models_{\Phi(\Sigma)} \rho$. 
Since $\rho$ is preserved by $F$-factors, there exists $J\in F$ such
that $\beta_\Sigma (M'_i) \models_{\Phi(\Sigma)} \rho$ for each $i\in
J$. 
By the satisfaction condition of $(\Phi,\alpha,\beta)$ we obtain that
$M'_i \models_\Sigma \alpha_\Sigma (\rho)$ for each $i\in J$. 
\end{proof}

The following is an immediate consequence of
Prop.~\ref{pres-sharp-prop} and Lemma \ref{atom-lem}, which is
applicable in concrete situations. 

\begin{corollary}\label{atomic-cor}
Let $\I$ be a  stratified institution with concrete $\F$-products.  
Let $(\Phi,\alpha,\beta) \co \I^\sharp \ra \B$ be an institution
morphism such that each $\beta_\Sigma$ preserves $\F$-products.  
Then for each $\Phi(\Sigma)$-sentence $\rho$ that is preserved by
$\F$-products/factors, $\alpha_\Sigma (\rho)$ is preserved by
$\F$-products/factors in $\I$.  
\end{corollary}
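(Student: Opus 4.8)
The plan is to read this off as a straight composition of Lemma~\ref{atom-lem} with Prop.~\ref{pres-sharp-prop}, so there is essentially no new content to establish; the work is purely in lining up the hypotheses.

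First I would invoke Prop.~\ref{pres-sharp-prop}. Since $\I$ is assumed to have concrete $\F$-products, that proposition applies to $\I$ and tells us that a $\Sigma$-sentence is preserved by $\F$-products (resp.\ $\F$-factors) in $\I$ if and only if it is preserved by $\F$-products (resp.\ $\F$-factors) in the local institution $\I^\sharp$. Hence it suffices to show that $\alpha_\Sigma(\rho)$ is preserved by $\F$-products/factors in $\I^\sharp$.

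Next I would apply Lemma~\ref{atom-lem} to the institution morphism $(\Phi,\alpha,\beta)\co \I^\sharp \ra \B$; recall that $\I^\sharp$ is an ordinary institution by Fact~\ref{sharp-institution-fact}, so this is a legitimate instance of the lemma. Fixing an arbitrary filter $F\in\F$, the hypothesis that each $\beta_\Sigma$ preserves $\F$-products gives in particular that each $\beta_\Sigma$ preserves $F$-products, which is exactly the hypothesis of Lemma~\ref{atom-lem}; therefore, since $\rho$ is preserved by $F$-products/factors, so is $\alpha_\Sigma(\rho)$ in $\I^\sharp$. As $F\in\F$ was arbitrary, $\alpha_\Sigma(\rho)$ is preserved by $\F$-products/factors in $\I^\sharp$, and combining with the first step yields preservation by $\F$-products/factors of $\alpha_\Sigma(\rho)$ in $\I$, as required.

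The only point that needs attention — and it is bookkeeping rather than a genuine obstacle — is the transition between the single-filter formulation of Lemma~\ref{atom-lem} and the class-of-filters formulation of the present statement, together with the observation that the concreteness of the $\F$-products is precisely what licenses the appeal to Prop.~\ref{pres-sharp-prop}.
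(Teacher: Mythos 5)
Your proof is correct and follows exactly the paper's intended route: the paper presents this corollary as an immediate consequence of Prop.~\ref{pres-sharp-prop} (equivalence of preservation in $\I$ and $\I^\sharp$, licensed by the concreteness of the $\F$-products) composed with Lemma~\ref{atom-lem} applied to the morphism $(\Phi,\alpha,\beta)\co\I^\sharp\ra\B$, which is precisely your argument.
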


Now we can put together the results of this section and apply them to
our concrete benchmark examples. 

\begin{corollary}
Let $\I \in \{
\MPL,\MFOL,\HPL,\HFOL,\MMPL,\MHPL,\MMFOL,\MHFOL,\HHPL,\OFOL,$
$\MOFOL,\HOFOL,\HMOFOL \}$. 
Then in $\I$ each sentence is preserved by all ultraproducts and
ultrafactors.  
Consequently $\I^\sharp$ and $\I^*$ are m-compact and in addition
$\I^\sharp$ is compact. 
\end{corollary}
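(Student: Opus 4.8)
The plan is to establish, for each $\I$ in the list and each $\Sigma$-sentence $\rho$, that $\rho$ is preserved both by all ultraproducts and by all ultrafactors in $\I$; these preservation properties then feed into Cor.~\ref{compact-cor} and a negation argument to give the stated compactness. We argue by induction on the structure of $\rho$, carrying the two preservation properties \emph{together}, since the negation step of the induction trades one for the other. Throughout, $\F$ is the class of all ultrafilters, which is closed under reductions; by Ex.~\ref{f-prod-ex} every $\I$ in the list has concrete $\F$-products, so the results of this section apply with $\F$ as the ambient class of filters.

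For the induction step we invoke the preservation results of this section, case by case on the outermost connective of $\rho$ (besides atoms, the grammars of our examples are built only from conjunction, negation, an existential quantifier over a first-order or a nominal variable, a possibility $\langle\lambda\rangle$, and the hybrid features $\sen{i}$ and $@_i$). Conjunctions are handled by Cor.~\ref{conn-cor}(1); negations by Cor.~\ref{conn-cor}(2)--(3), where it is essential both that $\F$ consists of ultrafilters and that the inductive hypothesis supplies preservation in both directions. Existential quantifications $(\exists\chi)\rho'$ are handled by Cor.~\ref{conn-cor}(4) for ultraproducts and by Prop.~\ref{exists-pres-prop} for ultrafactors, using that the reduct functor $\Mod(\chi)$ along a signature extension by a (shared) first-order constant or by a nominal both preserves and invents ultraproducts --- a standard property of such ``forget one constant'' reducts for $\FOL$-models, which lifts worldwise to the Kripke models of the modal examples (see \cite{iimt}). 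Possibilities $\langle\lambda\rangle(\rho_1,\dots,\rho_n)$ are handled by Prop.~\ref{modal-prop}, and the sentences $\sen{i}$ by Prop.~\ref{hybrid-prop}(1)--(2); for $@_i\rho$, Prop.~\ref{hybrid-prop}(3) gives preservation by ultraproducts, while preservation by ultrafactors follows from the self-duality $@_i\neg\rho \equiv \neg @_i\rho$ together with Cor.~\ref{conn-cor}(2)--(3). The side conditions needed here --- that the frame extraction $\Fr_\Sigma$ and the nominals extraction $\Nm_\Sigma$ preserve direct products and $\F$-products --- are read off the explicit filtered-product constructions of Ex.~\ref{f-prod-ex}: in the $\MPL$-family the filtered product is built first on the frames and then lifted, so $\Fr_\Sigma$ and $\Nm_\Sigma$ in fact \emph{create} these limits and colimits, and in the $\OFOL$-family they factor through a $\FOL$ carrier functor and a power functor, both of which create direct products and directed co-limits, hence $\F$-products.

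For the base case, the atomic sentences, we apply Cor.~\ref{atomic-cor}. For each $\I$ in the $\MPL$-family we use the evident institution morphism $\I^\sharp\ra\PL$ (respectively $\I^\sharp\ra\FOL$ in the first-order examples), given by the obvious functor on signatures, the inclusion of the atomic syntax on sentences, and the assignment $((W,M),w)\mapsto M^w$ on models; by the construction of Ex.~\ref{f-prod-ex} the ultraproduct of pointed Kripke models is, at the distinguished world, exactly the classical ultraproduct of the local models $M^w$, so this model component preserves ultraproducts, and Cor.~\ref{atomic-cor} then transfers the classical {\L}o\'{s} theorem for $\PL$ and $\FOL$ to the atoms of $\I$. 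For the $\OFOL$-family one takes $\Phi((F,P),X)=(F\!+\!X,P)$ and sends $(M,w)$ to the expansion of $M$ that interprets $X$ by $w$; since $\Sen^\OFOL((F,P),X)=\Sen^\FOL(F\!+\!X,P)$, this in fact already settles the whole of $\OFOL$, and leaves only the $\langle\pi\rangle$, $\sen{i}$ and $@_i$ clauses for $\MOFOL$, $\HOFOL$, $\HMOFOL$. For the doubly hybridized $\HHPL$ one instead bootstraps from the $\HPL$ case, already established: there the two-step filtered-product construction of Ex.~\ref{f-prod-ex} (first passing from $\PL$ to $\HPL$, then from $\HPL$ to $\HHPL$) provides an institution morphism into the appropriate ordinary institution extracted from $\HPL$ whose model component preserves ultraproducts, so Lemma~\ref{atom-lem} applies to the $\HHPL$-atoms.

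Once every sentence of $\I$ is preserved by all ultraproducts (and ultrafactors), Cor.~\ref{compact-cor} gives that $\I^*$ is m-compact and, the ultraproducts being concrete, that $\I^\sharp$ is m-compact. Finally $\I^\sharp$ is compact: $\I$ has external negation for every sentence, hence so does $\I^\sharp$ by Fact~\ref{logic-sharp-fact}, and an m-compact institution with negation is compact, since $E\models\rho$ iff $(E\cup\{\neg\rho\})^*=\emptyset$ iff, by m-compactness, $(E_0\cup\{\neg\rho\})^*=\emptyset$ for some finite $E_0\subseteq E$, iff $E_0\models\rho$ for such an $E_0$. The main obstacle is not the inductive skeleton, which is purely a matter of quoting the preceding results, but the example-by-example verification of their hypotheses --- above all that the relevant reduct functors invent ultraproducts and that the frame and nominals extractions preserve direct products and ultraproducts --- together with pinning down, for $\HHPL$, the correct ordinary institution sitting under its lower hybridization layer so that the bootstrap from $\HPL$ is legitimate.
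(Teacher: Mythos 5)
Your proposal is correct and takes essentially the same route as the paper's own proof: induction on sentence structure, with the base case handled by an institution morphism into (atomic) $\PL$/$\FOL$ (and a bootstrap through $\HPL$ for $\HHPL$) via Cor.~\ref{atomic-cor}, the induction step by Cor.~\ref{conn-cor}, Prop.~\ref{exists-pres-prop}, Prop.~\ref{modal-prop} and Prop.~\ref{hybrid-prop} after verifying the preservation/inventing hypotheses from the concrete filtered-product constructions, and the compactness claims via Cor.~\ref{compact-cor} together with external negation. The only (harmless) deviations are that you obtain ultrafactor preservation of $@_i\rho$ from the semantic equivalence $@_i\rho\equiv\neg@_i\neg\rho$ and Cor.~\ref{conn-cor} instead of invoking Prop.~\ref{hybrid-prop}(4), and that you prove ``m-compact plus negation implies compact'' directly rather than citing the general result.
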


\begin{proof}
The first conclusion is proved by induction on the structure of
$\I$-sentences through application of the preservation results of
Cor.~\ref{atomic-cor}, \ref{conn-cor}, Prop.~\ref{exists-pres-prop},
\ref{modal-prop}, and \ref{hybrid-prop} as follows. 

From Ex.~\ref{f-prod-ex} let us note that $\I$ has concrete
$F$-products for any filter $F$.   

The base case of our induction proof on the structure of the
$\I$-sentences is represented, with the exception of $\HHPL$, only by
atomic sentences.   
These atomic sentences may be of two kinds, either atomic
sentences of $\PL$ or $\FOL$, or else $\sen{i}$. 
In the case of $\HHPL$, besides $\sen{i^1}$ at the base case we also
have the sentences of the $\HPL$ corresponding to the lower layer of
hybridization. 
For the case when the sentence is a not a nominal sentence, 
we apply Cor.~\ref{atomic-cor}.
Let $\APL$ and $\AFOL$ denote the sub-institutions of $\PL$
(propositional logic) and of $\FOL$ (first order logic),
respectively, that have only the atoms as their sentences.
Let $\B$ be $\HPL$ when $\I = \HHPL$, $\APL$ when $\I\in \{
\MPL,\HPL,\MMPL,\MHPL \}$ and $\AFOL$ otherwise.  
The institution morphism $(\Phi,\alpha,\beta) \co \I \ra \B$ is
defined as follows:
\begin{itemize}\smallitems

\item $\Phi$ forgets the modalities symbols $\Lambda$ when 
$\I \in \{ \MMPL,\MHPL,\MMFOL,\MHFOL \}$ and the
nominals symbols when $\I \in \{ \HPL,\HFOL,\MHPL,\MHFOL,\HHPL
\}$\footnote{In the $\HHPL$ case we have $\Phi(\Nom^0,\Nom^1,P) =
  (\Nom^0,P)$.} and is identity otherwise;  

\item $\alpha$ is just the inclusion of the sentences of $\APL$ or of
  $\AFOL$ as atomic sentences of $\I$; and

\item $\beta_\Sigma (M,w) = M^w$.

\end{itemize}
The Satisfaction Condition for $(\Phi,\alpha,\beta)$ is an immediate
consequence of the satisfaction of atomic sentences in $\I$ (or of the
satisfaction of the $\HPL$-sentences in $\HHPL$) and of the definition
of $\models^\sharp$ (see Fact \ref{sharp-institution-fact}).   

Now we establish that each $\beta_\Sigma$ preserves all $F$-products. 
By Prop.~\ref{filt-prod-sharp-prop} we know that $F$-products in
$\I^\sharp$ are of the form 
\[
\{ (\mu_J,w_J) \co (M_J,w_J) \ra (M_F, \sem{\mu_I}(w_I)) \mid J \in F \}.
\]
According to the definition of $\beta$, we have to show that 
\begin{equation}\label{beta-equation}
\{ \mu^{w_J}_J \co M_J^{w_J} \ra M_F^{\sem{\mu_I}(w_I)} \mid J \in F \}
\end{equation}
is an $F$-product too. 
Without any loss of generality we may further assume that $M_J$ are
cartezian products. 
Note that $w_J = (w_j)_{j\in J}$ when the $\I$-models are Kripke
models and $w_J ; p_{J,j} = w_j$ in the other cases. 
It follows that $M_J^{w_J}$ is the product of 
$\{ M_j^{w_j} \mid j\in J \}$. 
When the $\I$-models are Kripke models, from the construction of
$F$-products of Kripke models, by Lemma 11.11 of \cite{iimt} (the same
with Lemma 1 of \cite{ks}) it follows that (\ref{beta-equation}) is an
$F$-product of $(M^{w_j}_j)_{j\in I}$.  
When $\I\in \{ \OFOL,\MOFOL,\HOFOL,\HMOFOL \}$ then the argument that
(\ref{beta-equation}) is an $F$-product is much simpler because 
$\{ \mu_J \co M_J \ra M_F \mid J \in F \}$ is an $F$-product of
$\FOL$ $(F,P)$-models and (\ref{beta-equation}) is just an expansion
of this to $(F+X,P)$.\footnote{Note that in this argument $F$ is
  overloaded, it means both the filter and the family of function
  symbols of the signature.}

When $\B \not= \HPL$ then all $\B$-sentences are atoms, hence according
to \cite{upins,iimt} they are `finitary basic sentences' and
consequently are preserved by all $F$-products and all $F$-factors.  
When $\B = \HPL$ then we have to use the conclusion of this
corollary for $\I = \HPL$, that all $\HPL$-sentences are preserved
by ultraproducts. 
This completes the set of conditions for applying
Cor.~\ref{atomic-cor}, which gets us to the conclusion that, apart of
the nominal sentences $\sen{i}$, all sentences at the base case 
are preserved by ultraproducts and ultrafactors. 
For the sentences $\sen{i}$ we apply the relevant part of
Prop.~\ref{hybrid-prop}. 
For this we have just to note that the condition that
$\Nm_\Sigma$ preserves direct products and ultraproducts is
covered by the fact that $\I$ has concrete $F$-products.  
This covers the base case of our induction proof.   

According to the definition of satisfaction in $\I$ all $\I$-sentences
are built by iterative application of external Boolean connectives, quantifiers,
modalities, $@_i$, from atoms when $\I \not= \HHPL$ and from
$\HPL$-sentences plus  $\sen{i^1}$ when $\I = \HHPL$.  
Hence for the induction step part of the proof, we have to check the
conditions of Cor.~\ref{conn-cor}, Prop.~\ref{exists-pres-prop},
\ref{modal-prop}, and \ref{hybrid-prop}.   
The preservation of direct products and of ultraproducts by
$\Fr_\Sigma, \Nm_\Sigma$ is a direct consequence of the construction
of filtered products of Kripke models.
Because the class of all ultrafilters is closed
under reductions, it remains only to show that, when applicable, 
for each signature extension $\chi$ with first order variables or with
nominals variables, $\Mod(\chi)$ preserves and invents ultraproducts. 

The preservation property holds for all $F$-products as follows. 
First we have to notice it for the direct products. 
When $\I\in \{ \OFOL,\MOFOL,\HOFOL,\HMOFOL \}$ this is just a matter
of preservation of direct products of $\FOL$ models by reducts
forgetting interpretations of constants, which is obvious.  
When the $\I$-models are Kripke models, this is a consequence of the
fact that whenever we expand a direct product $(W,M)$ of a family
$(W_i,M_i)_{i\in I}$ of reducts of Kripke models 
$(W'_i,M'_i)_{i\in I}$ with an interpretation of a new constant $x$ in
$(W,M)$ by $W'_x = ((W'_i)_x)_{i\in I}$ when $x$ is 
nominal or by $M'_x = ((M'_i)_x)_{i\in I}$ when $x$ is a first order
constant, this yields a direct product of $(W'_i,M'_i)_{i\in
  I}$.\footnote{Note that here, in order to simplify the discussion,
  we implicitly assumed cartezian products, which is no loss of
  generality, and that since in all situations for $\I$ the
  interpretation of first order constants are shared in all possible
  worlds we may have a notation such as $M_x$ instead of $M^w_x$.}    
The argument is completed by noting that the directed co-limit 
component of any $F$-product is preserved by reducts corresponding to
signature expansions $\chi$ with nominal or first order variables as a
consequence of the fact that any model homomorphism 
$\Mod(\chi)(M') \ra N$ may be expanded uniquely to a model
homomorphism $M' \ra N'$.\footnote{At the level of abstract
  institutions, in \cite{iimt} this property is called
  `quasi-representability'; moreover \cite{iimt} gives a general
  result that quasi-representable signature morphisms always
  preserve directed co-limits.}    
This property holds both in the simpler case when the $\I$-models are
$\FOL$-models but also in the case when they are Kripke models; in
the latter situation, in the case of the first order variables the
uniqueness of $N'$ relies upon the fact that interpretations of the
underlying carriers and of the first order constants are shared across
the possible worlds.   

Now we show that the inventing property holds in the complete form for
all $F$-products. 
Let $\chi \co \Sigma \ra \Sigma'$ be a signature extension with
nominal or first order variables and let $\{ \mu_J \co M_J \ra M_F
\mid J \in F \}$ be an $F$-product of a family $(M_i)_{i\in I}$ of
$\Sigma$-models. 
Let $N'$ be any $\chi$-expansion of $M_F$. 
Since $\mu_I \co M_I \ra M_F$ is surjective\footnote{In the case of
  Kripke models this means that all its components are surjective.}
there exists $M'_I$ a $\chi$-expansion of $M_I$ such that $\mu_I$ is a
$\Sigma'$-model homomorphism $M'_I \ra N'$.
For each $i\in I$ we let $M'_i$ be the $\chi$-expansion of $M_i$ such
that $p_{I,i} \co M'_I \ra M'_i$ is $\Sigma'$-homomorphism. 
This yields a lifting of $\{ \mu_J \co M_J \ra M_F \mid J \in F \}$ to
a co-cone $\{ \mu_J \co M'_J \ra N' \mid J \in F \}$ over a directed
diagram of projections in $\Mod(\Sigma')$. 
For any other co-cone $\{ \nu_J \co M'_J \ra N'' \mid J \in F \}$
we let $h \co M_F \ra \Mod(\chi)(N'')$ be the unique mediating
homomorphism given by the co-limit property of 
$\{ \mu_J \co M_J \ra M_F \mid J \in F \}$.
It remains to show that $h \co N' \ra N''$ is a homomorphism of
$\Sigma'$-models. 
This follows by virtue of the fact that $\mu_I ; h = \nu_I$ and
because $\nu_I$ is a homomorphism of $\Sigma'$-models. 

The m-compactness properties of $\I^*$ and $\I^\sharp$ follow
immediately from the first part of this corollary via
Cor.~\ref{compact-cor}.  
The compactness property of $\I^\sharp$ follows from the general
result that compactness and m-compactness are equivalent properties in
institutions that have external negations and conjunctions (see
\cite{iimt}), which by Fact \ref{logic-sharp-fact} is the case for all
institutions $\I^\sharp$ considered here.  
\end{proof}

\section{Conclusions}

In this paper we have showed that the stratified
institutions of \cite{strat} may serve as a general fully abstract model
theoretic framework for modal logical systems. 
We have shown that stratified institutions allow for an
abstract semantics for modalities, nominals, and satisfaction operator
($@$); in each of these cases we had been able to employ the minimal
structures supporting the corresponding semantics.
Within this context we have developed a general ultraproducts method,
including a general {\L}o\'{s} theorem, applicable to a wide variety
of modal logical systems. 
Compactness results have have been derived from this ultraproducts
method. 
The concepts introduced and the results developed have been applied to
a series of concrete benchmark examples that include both well known
and quite unconventional modal logical systems from logic and
computing. 
Due to the very high level of generality of our developments, without
commitment to explicit forms of Kripke semantics, our work may be
easily applicable to a multitude of new unconventional logical
systems. 
Moreover it may constitute a starting point for a deep institution
theoretic approach to a dedicated model theory for modal logical
systems in the style of \cite{iimt}.  
  
\paragraph{Acknowledgements}
This work has been supported by a grant of the Romanian 
National Authority for Scientific Research, CNCS-UEFISCDI, project number 
PN-II-ID-PCE-2011-3-0439.

\bibliographystyle{plain}

\end{document}